\documentclass[11pt]{article}
\usepackage[T1]{fontenc}
\usepackage[utf8]{inputenc}
\usepackage{lmodern,microtype,amsmath,amssymb,amsthm,mathtools}
\usepackage[margin=1in]{geometry}
\usepackage{enumitem}
\usepackage{xcolor}
\usepackage[colorlinks=true,linkcolor=blue!45!black,citecolor=blue!45!black,urlcolor=blue!45!black]{hyperref}
\hypersetup{pdftitle={Sewing on Thin Groupoids, Knitting, and Based Holonomy for Lipschitz Paths},pdfauthor={Alexandre Reggiolli Teixeira}}
\newtheorem{theorem}{Theorem}[section]
\newtheorem{proposition}[theorem]{Proposition}
\newtheorem{lemma}[theorem]{Lemma}
\newtheorem{corollary}[theorem]{Corollary}
\theoremstyle{definition}
\newtheorem{definition}[theorem]{Definition}
\theoremstyle{remark}
\newtheorem{remark}[theorem]{Remark}
\DeclareMathOperator{\Lip}{Lip}
\DeclareMathOperator{\len}{len}
\DeclareMathOperator{\mesh}{mesh}
\DeclareMathOperator{\Id}{Id}
\DeclareMathOperator{\diam}{diam}
\newcommand{\dist}{\mathsf d}
\newcommand{\Gthin}{\mathcal G_P^{\mathrm{Lip\text{-}th}}}
\newcommand{\HH}{\mathcal H}
\newcommand{\R}{\mathbb R}
\newcommand{\eps}{\varepsilon}
\numberwithin{equation}{section}
\setlist[enumerate]{itemsep=3pt,topsep=5pt}
\allowdisplaybreaks[1]
\title{Sewing on Thin Groupoids, Knitting, and Based Holonomy for Lipschitz Paths}
\author{Alexandre Reggiolli Teixeira\thanks{Institute of Mathematics,
Statistics and Scientific Computing (IMECC), University of Campinas
(Unicamp), Campinas, SP, Brazil.
Email: \href{mailto:a163407@dac.unicamp.br}{\texttt{a163407@dac.unicamp.br}}.
ORCID: \href{https://orcid.org/0000-0002-5598-4372}{0000-0002-5598-4372}.}}
\date{}
\begin{document}
\maketitle
\begin{abstract}
An approximate action of the pair groupoid of a metric space determines
an action of its Lipschitz-thin groupoid on complete extended metric
fibers, uniquely characterized by its local sewing estimate. We prove
this assertion under a superlinear three-point defect estimate and a Lipschitz
bound for finite products. As a result, we obtain a positive proof of
Curry and Manchon's sewing conjecture \cite{CM}, with the necessary
path-scaling correction to its estimate.
Further restriction to isotropy constructs the based holonomy anticipated in
their Remark~4.15 as a representation of based Lipschitz loops modulo
thin equivalence. Rectangular comparison estimates give full relative
Lipschitz-homotopy descent, in particular under their strong knitting
hypothesis and under any three-point estimate of total order greater
than two. We prove the corresponding flatness and basepoint-covariance
statements. An area model shows that the order threshold is sharp.
We also separate the exponential estimate from the general knitting
assumptions and exhibit a compact disk metric for which strong knitting
does not imply invariance under merely continuous homotopy.
As an application, for controlled fields on a metric space, we prove that the classical rough integral descends to the
thin groupoid and admits substitution with a common rough controller.

\end{abstract}
\noindent\textit{Keywords.} Sewing lemma, knitting lemma, Lipschitz-thin
groupoid, metric tree, holonomy, flatness, controlled rough integration.\par
\noindent\textit{2020 Mathematics Subject Classification.}
18B40, 20L05, 26A42, 41A99, 53C05, 54F50.

\section{Introduction}

The sewing principle reconstructs an exactly composable object from
increments whose defect has order strictly larger than one, having interesting applications in stochastic analysis and integration theory (see \cite{Gubinelli}): in the
nonlinear setting, approximate flows and their composition estimates
provide a route to differential equations driven by rough paths
\cite{Bailleul}. This analytic construction also raises a geometric
question: which changes of path leave the sewn transport unchanged?
Curry and Manchon \cite{CM} formulated an interval sewing theorem
for maps between varying complete metric fibers and proposed an extension
to metric parameter spaces. This extension asks for an action
of the groupoid of Lipschitz paths modulo Lipschitz-thin equivalence,
starting from maps $\mu_{xy}:M_y\to M_x$ satisfying a
three-point defect estimate and a Lipschitz bound for products indexed
by finite chains. Their Conjecture~4.13 asks whether sewing along
individual paths gives a map depending only on the corresponding
thin class.

On the homotopy side, their Remark~4.15 connects thin descent to
consistent based holonomy and knitting to flatness. Here flatness means
identity transport for every loop admitting a relative Lipschitz
contraction, the conclusion of their knitting theorem under a stronger
four-point estimate.

Our present work is dedicated to proving the structural assertion of Conjecture~4.13 in its
full generality, with the necessary path-scaling correction to its
quantitative estimate (Theorem~\ref{thm:main}). We also develop, as a corollary to the main result, the
homotopy theory of sewn transport along Lipschitz paths, construct the
associated based holonomy, and prove its flatness under the knitting
hypotheses.
In particular, thin descent holds for every three-point order $p>1$, whereas $p>2$
is the sharp universal threshold guaranteeing full relative Lipschitz-homotopy
descent (Theorems~\ref{grid:superquadratic} and~\ref{curvature:sharp}).

Before diving into the full development, we explain the main ideas
and geometric intuition behind the proofs.

First, we point out that our pathwise construction is quantitative in the following sense. Set $p=1+\eps>1$,
$C=\sum_r C_r$, and $K=2^p C\zeta(p)$. For a path $\gamma$, write
$\ell_{s,t}$ for the length of its restriction between $s$ and $t$.
The point-deletion argument gives
\[
 \mathsf d(\phi^\gamma_{s,t},\mu_{\gamma(s)\gamma(t)})
 \le K g(\ell_{s,t})\ell_{s,t}^p.
\]
For a partition $D$ of that oriented interval, let
$\omega_\gamma(D)$ be the largest length of a restriction between
two consecutive partition points. Keeping one global prefix bound
through all deletions gives
\[
 \mathsf d(P_\gamma(D),\phi^\gamma_{s,t})
 \le K g(\ell_{s,t})\ell_{s,t}\omega_\gamma(D)^{p-1}.
\]
In particular, arbitrary partitions with mesh tending to zero have
the same limit. Central cancellation of the products of a partition
and its reverse proves two-sided inverses. These estimates use only
the stated metric bounds, without any continuity assumption on how
the fibers and increments depend on the parameters.

To pass to thin classes, we use metric trees and compose the transports
along their geodesics. For the constant-speed
geodesics $r_{xy}$ put $F_{xy}=\Phi(r_{xy})$. If $m$ is the median
of $x,y,z$, interval composition and reversal yield
\[
 F_{xy}F_{yz}
 =F_{xm}F_{my}F_{ym}F_{mz}=F_{xz}.
\]
Thus the geodesic transports form an exact action of the pair
groupoid. A hybrid-product estimate then identifies the transport of
every Lipschitz tree path with the geodesic transport between its
endpoints. The exact identity is established before arbitrary path
independence is used.

To compare homotopic paths, we use the factorization theorem of
Esmayli--Haj\l asz \cite[Theorem~1.1 and Remark~1.2]{EH}: a
Lipschitz map of a square whose image has zero two-dimensional Hausdorff content
factors through a metric tree. The factorization $H=q\psi$, with $q$
$1$-Lipschitz, preserves the defect constants and chain control under
pullback. For a relative homotopy, vertical lifts may vary. Their
projections are constant, so their transports are identities. Comparing
the two lifted boundary routes gives equality of the horizontal
transports. A thin contraction lies in a Lipschitz path image, which
has zero two-dimensional Hausdorff content. This proves thin descent for every
$p>1$.

The resulting action gives, at each basepoint $a$, a homomorphism
\[
 \operatorname{Hol}_a:
 \pi_{1,\mathrm{thin}}^{\mathrm{Lip}}(P,a)
 \longrightarrow\operatorname{BiLip}_{\mathrm{bd}}(M_a).
\]
The target consists of bi-Lipschitz bijections with bounded displacement.
Partition products determine the representation and its basepoint
covariance. Factorization through $\pi_1^{\mathrm{Lip}}(P,a)$ requires
triviality on the kernel of the thin-to-full quotient. The knitting
estimates below prove this condition.

For that purpose, rectangular comparison measures the difference
between the two two-edge routes across a homotopy cell. A strip
telescoping identity bounds the difference between two row products
by the sum of these cell defects, multiplied only by a horizontal
prefix bound. Vanishing of the total defect is a sufficient criterion.
We derive partition-based and regular-grid conditions directly from
the increments. If the three-point order satisfies $p>2$, comparison
of both cell routes with their common direct increment gives a total
$O(n^{2-p})$ on a regular grid. The strong four-point hypothesis
also gives such decay and yields the knitting theorem with the
original general controls $f$ and $g$.

The distinction between transport and flat transport is familiar in
the differential-geometric theory of functors and connections
\cite{SW}. Here it appears in an elementary translation model. Chord
integrals of $x_1\,dx_2$ have a three-point defect of order two and
sew to path integrals. The positively oriented unit-square loop has
holonomy equal to translation by one although it admits a relative
Lipschitz contraction. Restricted to the unit square, this approximate
action satisfies every weaker admissible gain and admits no
three-point estimate of order greater than two. The
threshold for universal full Lipschitz-homotopy descent is therefore
sharp.

Two quantitative corrections are necessary for comparison with the
displayed formulas of \cite{CM}. Pullback along $\gamma$ changes the
three-point constants to $C_r\Lip(\gamma)^p$, which must occur in
the parameter-based local bound of the conjecture. In the knitting
theorem, an exponential estimate requires a linear bound on the
Lipschitz constants of individual increments. General continuous
$f$ and $g$ suffice for qualitative descent but do not imply that
linear bound. Both distinctions are accompanied by explicit examples,
and the hypotheses of the qualitative theorems are retained.

Finally, contractibility in the flatness statement means relative
Lipschitz contractibility. This follows the homotopy groupoid used
in \cite{CM}. To show that the distinction is substantive for
arbitrary metric targets, we construct a compact metric disk on
which a strong knitting family has nontrivial transport around a
continuously contractible Lipschitz circle. Thus the metric
qualification cannot be removed solely from the topology of the
parameter space.

We finish in Section~\ref{sec:integration} by returning to rough
integration, with an application to Gubinelli's controlled integral
\cite{Gubinelli}. Thin descent assembles the interval integrals of
controlled fields into a single additive functor on the metric thin
groupoid, with estimates expressed through class length. Exact tree
transport then gives primitives and integration along continuous paths
of finite $\rho$-variation with $1\le\rho<3\alpha$, where $\alpha$ is
the regularity exponent of the enhanced field. These paths may have
infinite length. The application also includes joint stability and
substitution with a common rough controller.

We begin the next section with the metric (background) assumptions and composition estimates that
will control partition products and their limits.

\section{Metric fibers and approximate actions}
\label{sec:framework}

Let $P$ be a metric space. Each $M_x$, $x\in P$, carries an extended
metric $d_x:M_x\times M_x\to[0,\infty]$. Its galaxies are the
equivalence classes for $u\sim v$ when $d_x(u,v)<\infty$.
Completeness of $M_x$ means completeness of each galaxy, and continuity
is taken for their disjoint-union topology. All fibers are nonempty.
The empty cases will be recorded after the hypotheses.

For $x,y\in P$, write
\begin{equation}\label{eq:homspace}
 \mathcal C_{xy}=C(M_y,M_x),\qquad
 \dist_{xy}(A,B)=\sup_{v\in M_y}d_x(Av,Bv).
\end{equation}
A finite Lipschitz bound for $A:M_y\to M_x$ means
\[
 d_x(Av,Aw)\le Ld_y(v,w)\qquad(d_y(v,w)<\infty).
\]
Composition is denoted $AB=A\circ B$. Thus maps associated with a
path from $a$ to $b$ have domain $M_b$ and codomain $M_a$.
The supremum formula also defines comparison distances between
arbitrary maps of these types. Completeness is asserted only for
the spaces of continuous maps in \eqref{eq:homspace}.
The map-space facts used below are those of
\cite[Section~2.2]{CM}. Their quantitative form is recorded to fix
the treatment of extended distances and the direction of every product.

\begin{lemma}\label{lem:maps}
The extended metric space $\mathcal C_{xy}$ is complete. For a sequence
in this space,
\begin{equation}\label{eq:lip-limit}
 A_n\longrightarrow A,\quad\sup_n\Lip(A_n)\le L<\infty
 \quad\Longrightarrow\quad\Lip(A)\le L.
\end{equation}
If $A,A':Y\to X$ and $B,B':Z\to Y$ are composable maps with the
finite comparison bounds appearing on the right, then
\begin{align}
 \dist(AB,A'B')&\le\dist(A,A')+\Lip(A')\dist(B,B'),
                    \label{eq:composition}\\
 \dist(AB,A'B)&\le\dist(A,A'),\qquad
 \dist(AB,AB')\le\Lip(A)\dist(B,B').\label{eq:side-composition}
\end{align}
For $A_j,B_j:X_j\to X_{j-1}$ with finite individual comparison
distances and finite Lipschitz constants for all $A$-prefixes, one has
\begin{equation}\label{eq:hybrid}
 \dist(A_1\cdots A_n,B_1\cdots B_n)
 \le\sum_{j=1}^n\Lip(A_1\cdots A_{j-1})\dist(A_j,B_j).
\end{equation}
\end{lemma}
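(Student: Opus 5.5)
The proof is elementary, and I will carry it out in four steps that follow the order of the statement, using only the supremum definition of $\dist$ and the triangle inequality in the extended fiber metrics.

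\emph{Completeness.} Since $\dist_{xy}$ can be infinite, completeness refers to Cauchy sequences, which eventually lie at finite mutual distance. Let $(A_n)$ be Cauchy for $\dist_{xy}$. For each $v\in M_y$, the sequence $(A_nv)$ is Cauchy in $d_x$, so it eventually lies in a single galaxy of $M_x$. That galaxy is complete, so $A_nv\to Av$ for some point $Av$. The Cauchy estimate is uniform in $v$, so passing to the limit in $d_x(A_nv,A_mv)\le\eps$ gives $\dist(A_n,A)\le\eps$ for all large $n$, which is uniform convergence. Continuity of $A$ then follows from the standard $\eps/3$ argument for uniform limits of continuous maps. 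Here the topology is the disjoint union of galaxies, so continuity is checked inside each galaxy. Because $A_n$ is continuous, it maps a galaxy of $M_y$ into a single galaxy of $M_x$, and the uniform limit $A$ does the same.

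\emph{The Lipschitz limit \eqref{eq:lip-limit}.} Take $v,w$ with $d_y(v,w)<\infty$. By the triangle inequality,
\[
d_x(Av,Aw)\le d_x(Av,A_nv)+L\,d_y(v,w)+d_x(A_nw,Aw).
\]
The two outer terms are at most $\dist(A_n,A)$, which tends to $0$, so letting $n\to\infty$ gives $\Lip(A)\le L$.

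\emph{Composition estimates.} For \eqref{eq:composition}, fix $v$ and insert the intermediate point $A'Bv$:
\[
d(ABv,A'B'v)\le d(ABv,A'Bv)+d(A'Bv,A'B'v).
\]
The first term is at most $\dist(A,A')$, since that supremum runs over all points of $Y$, including $Bv$. The second term is at most $\Lip(A')\,d(Bv,B'v)$. This is valid because the comparison bounds on the right are finite, so $d(Bv,B'v)<\infty$ and the two points lie in one galaxy, where the Lipschitz bound applies. Taking the supremum over $v$ gives \eqref{eq:composition}. The side estimates \eqref{eq:side-composition} are the special cases $B=B'$ and $A=A'$.

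\emph{The hybrid estimate \eqref{eq:hybrid}.} I would telescope through the hybrid products
\[
H_j=A_1\cdots A_jB_{j+1}\cdots B_n,
\]
so that $H_n$ is the $A$-product and $H_0$ is the $B$-product. Consecutive hybrids differ only in the $j$-th factor:
\[
H_j=(A_1\cdots A_{j-1})\,A_j\,R_j,\qquad H_{j-1}=(A_1\cdots A_{j-1})\,B_j\,R_j,\qquad R_j=B_{j+1}\cdots B_n.
\]
The right-hand estimate in \eqref{eq:side-composition} gives $\dist(H_j,H_{j-1})\le\Lip(A_1\cdots A_{j-1})\,\dist(A_jR_j,B_jR_j)$. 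The left-hand estimate then gives $\dist(A_jR_j,B_jR_j)\le\dist(A_j,B_j)$. Summing over $j$ with the triangle inequality yields \eqref{eq:hybrid}.

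The main point needing care is the extended metric in the right-hand estimate of \eqref{eq:side-composition}. The Lipschitz bound only applies to pairs at finite distance, and finiteness of $\dist(A_j,B_j)$ ensures that $A_jR_jv$ and $B_jR_jv$ are indeed at finite distance. Finiteness of each $A$-prefix Lipschitz constant is exactly what this step consumes.
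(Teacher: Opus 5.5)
Your proposal is correct and follows the paper's proof step for step: pointwise limits in complete galaxies with uniform convergence and an $\eps/3$ continuity argument, the triangle inequality for the Lipschitz limit, insertion of the intermediate point $A'Bv$ for \eqref{eq:composition}, and telescoping through the hybrids $H_j=A_1\cdots A_jB_{j+1}\cdots B_n$ for \eqref{eq:hybrid}. One aside in your completeness step is false, although nothing uses it, so you should drop it: a continuous map need not send a galaxy of $M_y$ into a single galaxy of $M_x$, since galaxies need not be connected (for example, the two points of a two-point galaxy can be sent to different galaxies).
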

\begin{proof}
Suppose $(A_n)$ is Cauchy. For every $\delta>0$ there exists $N$ with
\[
 \sup_{v\in M_y}d_x(A_nv,A_mv)\le\delta\qquad(n,m\ge N).
\]
For fixed $v$, the sequence $(A_nv)$ is eventually contained in one
complete galaxy. Denote its limit by $Av$. Letting $m\to\infty$
in the displayed inequality gives
\[
 \sup_{v\in M_y}d_x(A_nv,Av)\le\delta\qquad(n\ge N).
\]
For continuity at $v$, fix $\delta>0$ and an index $n$ with
$\dist(A_n,A)<\delta/3$. Continuity of $A_n$ at $v$ gives a
neighborhood $U$ of $v$ in its galaxy such that
$d_x(A_nw,A_nv)<\delta/3$ for $w\in U$. Then
\[
 d_x(Aw,Av)
 \le d_x(Aw,A_nw)+d_x(A_nw,A_nv)+d_x(A_nv,Av)<\delta.
\]
Thus $A\in\mathcal C_{xy}$ and $A_n\to A$ in its supremum metric.
For $d_y(v,w)<\infty$, the same triangle inequality gives
\[
 d_x(Av,Aw)\le2\dist(A,A_n)+Ld_y(v,w).
\]
The limit $n\to\infty$ proves \eqref{eq:lip-limit}.

At each $z\in Z$,
\[
 \begin{aligned}
 d_X(ABz,A'B'z)
 &\le d_X(ABz,A'Bz)+d_X(A'Bz,A'B'z)\\
 &\le\dist(A,A')+\Lip(A')d_Y(Bz,B'z).
 \end{aligned}
\]
Taking the supremum proves \eqref{eq:composition} and its two special
forms \eqref{eq:side-composition}. To obtain \eqref{eq:hybrid}, set
\[
 H_j=A_1\cdots A_jB_{j+1}\cdots B_n\quad(0\le j\le n).
\]
The common suffix restricts the supremum and the common prefix gives
\[
 \dist(H_j,H_{j-1})
 \le\Lip(A_1\cdots A_{j-1})\dist(A_j,B_j).
\]
Sum over $j$. The empty prefix is an identity and has Lipschitz
constant at most one.
\end{proof}

The approximate-action assumptions below are those of
\cite[Definition~4.11]{CM}.

\begin{definition}\label{def:approx}
An approximate action of $P\times P$ on $(M_x)$ is a family
$\mu_{xy}\in\mathcal C_{xy}$ satisfying
\begin{align}
 \mu_{xx}&=\Id_{M_x},\label{eq:diag}\\
 \Lip(\mu_{xy})&\le1+f(d_P(x,y)),\label{eq:f}\\
 \dist_{xy}(\mu_{xy},\mu_{xz}\mu_{zy})
 &\le\sum_{r=1}^N C_r d_P(y,z)^{a_r}d_P(z,x)^{b_r},\label{eq:defect}\\
 \Lip(\mu_{x_0x_1}\cdots\mu_{x_{n-1}x_n})
 &\le g\left(\sum_{j=1}^n d_P(x_{j-1},x_j)\right)\label{eq:chain}
\end{align}
for every triple and every finite chain. Here
\[
 \begin{gathered}
 f:[0,\infty)\to[0,\infty)\text{ is continuous},\qquad f(0)=0,\\
 g:[0,\infty)\to[1,\infty)\text{ is continuous and nondecreasing},\\
 N\ge1,\quad C_r,a_r,b_r>0,\quad a_r+b_r=1+\eps,
 \quad\eps>0.
 \end{gathered}
\]
We use the constants
\begin{equation}\label{eq:constants}
 p=1+\eps,\qquad C=\sum_{r=1}^NC_r,\qquad K=2^pC\zeta(p).
\end{equation}
\end{definition}

The proof of sewing uses \eqref{eq:diag}, \eqref{eq:defect}, and
\eqref{eq:chain}. Condition~\eqref{eq:f} is retained to state the
result on the original class. No condition $g(0)=1$ is imposed.
If an empty fiber $M_x$ is admitted, existence of $\mu_{xy}:M_y\to M_x$
for every $y$ forces every fiber to be empty. That case has the unique
empty action. The case $P=\varnothing$ is also vacuous.

\begin{lemma}\label{lem:finite-defects}
For every $x,y\in P$,
\begin{equation}\label{eq:approx-inverse}
 \dist_{xx}(\mu_{xy}\mu_{yx},\Id_{M_x})\le C d_P(x,y)^p.
\end{equation}
Every finite chain product $\mu_{x_0x_1}\cdots\mu_{x_{n-1}x_n}$ has
finite supremum distance from $\mu_{x_0x_n}$.
\end{lemma}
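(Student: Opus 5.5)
For \eqref{eq:approx-inverse}, I will apply the defect estimate \eqref{eq:defect} to the triple with first index $x$, second index $x$ and middle point $z=y$:
\[
 \dist_{xx}(\mu_{xx},\mu_{xy}\mu_{yx})
 \le\sum_{r=1}^N C_r d_P(x,y)^{a_r}d_P(y,x)^{b_r}
 =\sum_{r=1}^N C_r d_P(x,y)^{a_r+b_r}=C d_P(x,y)^p .
\]
Since $a_r+b_r=p$ for every $r$, the sum collapses to $C d_P(x,y)^p$ with $C$ as in \eqref{eq:constants}. Then \eqref{eq:diag} replaces $\mu_{xx}$ by $\Id_{M_x}$, which gives the claim. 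The supremum distance is symmetric, so the order of the arguments does not matter.

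For the chain statement I argue by induction on the length $n$. For $n=0$ the empty product is $\Id_{M_{x_0}}=\mu_{x_0x_0}$, and for $n=1$ the two maps coincide, so the distance is $0$ in both cases. For the inductive step I write
\[
\mu_{x_0x_1}\cdots\mu_{x_{n-1}x_n}=Q\,\mu_{x_{n-1}x_n},\qquad Q=\mu_{x_0x_1}\cdots\mu_{x_{n-2}x_{n-1}},
\]
and compare it with $\mu_{x_0x_n}$ through the intermediate map $\mu_{x_0x_{n-1}}\mu_{x_{n-1}x_n}$. By the triangle inequality for $\dist_{x_0x_n}$ and the first inequality of \eqref{eq:side-composition} (common right factor),
\[
 \dist(Q\mu_{x_{n-1}x_n},\mu_{x_0x_n})
 \le\dist(Q,\mu_{x_0x_{n-1}})+\dist(\mu_{x_0x_{n-1}}\mu_{x_{n-1}x_n},\mu_{x_0x_n}).
\]
The first term is finite by the induction hypothesis. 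The second term is bounded by \eqref{eq:defect} with middle point $z=x_{n-1}$, so it is at most $\sum_r C_r d_P(x_n,x_{n-1})^{a_r}d_P(x_{n-1},x_0)^{b_r}<\infty$. Peeling off factors from the right in this way means I only need the right-factor form of \eqref{eq:side-composition}, which has no Lipschitz factor. The alternative, peeling from the left, would bring in $\Lip(\mu_{x_0x_1})$; that is still finite by \eqref{eq:f}, but the right-hand version is cleaner.

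No step here is a real obstacle. The only point needing care is that the distances are extended metrics, so I must confirm each term is finite before adding; this holds because every bound above is an explicit finite sum. I also need the triangle inequality for the supremum distance between arbitrary maps. This holds pointwise in $d_{x_0}$ and passes to suprema, so finiteness of each summand gives finiteness of the total.
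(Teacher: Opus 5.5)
Your proposal is correct and follows the paper's proof: you obtain \eqref{eq:approx-inverse} by setting the two outer points of \eqref{eq:defect} equal and using $a_r+b_r=p$. You then prove the chain claim by the same right-peeling induction, inserting $\mu_{x_0x_{n-1}}\mu_{x_{n-1}x_n}$ and bounding the two resulting terms by the induction hypothesis and \eqref{eq:defect}.
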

\begin{proof}
Put the two outside points equal in \eqref{eq:defect} and use
$a_r+b_r=p$ to obtain \eqref{eq:approx-inverse}. For the second
claim let $P_k=\mu_{x_0x_1}\cdots\mu_{x_{k-1}x_k}$ and
$D_k=\dist(P_k,\mu_{x_0x_k})$. Then $D_1=0$ and
\[
 \begin{aligned}
 D_{k+1}
 &\le\dist(P_k\mu_{x_kx_{k+1}},
                 \mu_{x_0x_k}\mu_{x_kx_{k+1}})
   +\dist(\mu_{x_0x_k}\mu_{x_kx_{k+1}},\mu_{x_0x_{k+1}})\\
 &\le D_k+\sum_{r=1}^NC_r
            d_P(x_{k+1},x_k)^{a_r}d_P(x_k,x_0)^{b_r}<\infty.
 \end{aligned}
\]
Induction gives $D_n<\infty$.
\end{proof}

These estimates apply to finite chains. Paths will supply those chains
through partitions, so we next fix the path operations and the thin
equivalence required of sewn transport.

\section{Lipschitz paths and the thin groupoid}
\label{sec:paths-quotient}

For a Lipschitz path $\gamma:[0,1]\to P$, write
\[
 \bar\gamma(u)=\gamma(1-u),\qquad
 \gamma_{s,t}(u)=\gamma(s+(t-s)u),\qquad c_a(u)=a.
\]
If $\gamma:a\to b$ and $\eta:b\to c$, their concatenation is
\begin{equation}\label{eq:concat-formula}
 (\gamma*\eta)(u)=
 \begin{cases}
  \gamma(2u),&0\le u\le\tfrac12,\\
  \eta(2u-1),&\tfrac12\le u\le1.
 \end{cases}
\end{equation}
The two formulas agree at $u=1/2$, and
\[
 \Lip(\gamma*\eta)\le2\max\{\Lip(\gamma),\Lip(\eta)\}.
\]
For an ordered partition $D=(0=t_0<\cdots<t_n=1)$, put
\[
 S_\gamma(D)=\sum_{j=1}^nd_P(\gamma(t_{j-1}),\gamma(t_j)),
 \qquad \len(\gamma)=\sup_D S_\gamma(D).
\]
Refinement increases $S_\gamma(D)$ by the triangle inequality.
Consequently every sampled subchain has length at most
$\len(\gamma)\le\Lip(\gamma)$. For $s,t\in[0,1]$,
\begin{equation}\label{eq:subpath-geometry}
 \ell_{s,t}:=\len(\gamma|_{[s\wedge t,s\vee t]})
 =\len(\gamma_{s,t})\le\Lip(\gamma)|t-s|,
 \qquad\Lip(\gamma_{s,t})\le\Lip(\gamma)|t-s|.
\end{equation}
The length identity is the bijection between partitions induced by
$u\mapsto s+(t-s)u$, in either orientation. The last inequality is
the Lipschitz inequality for this affine map.

We use exactly the finite-chain quotient in
\cite[Definition~4.9]{CM}.

\begin{definition}\label{def:thin}
A relative Lipschitz homotopy from $\gamma_0:a\to b$ to
$\gamma_1:a\to b$ is a Lipschitz map $H:[0,1]^2\to P$ satisfying
\begin{equation}\label{eq:relative-boundary}
 H(s,0)=\gamma_0(s),\quad H(s,1)=\gamma_1(s),\quad
 H(0,t)=a,\quad H(1,t)=b.
\end{equation}
A loop $\lambda:a\to a$ is Lipschitz thin if it has such a homotopy
to $c_a$ with
\begin{equation}\label{eq:thin-image}
 H([0,1]^2)\subseteq\lambda([0,1]).
\end{equation}
Paths $\gamma_0,\gamma_1:a\to b$ are thin equivalent if there is a
finite sequence of Lipschitz paths $\beta_0,\ldots,\beta_m:a\to b$
with
\[
 \beta_0=\gamma_0,\qquad\beta_m=\gamma_1,
 \qquad\beta_{j-1}*\bar\beta_j\text{ thin}\quad(1\le j\le m).
\]
The square has its Euclidean metric. The equivalent $\ell^1$ metric
will be specified when used for homotopy-grid estimates.
\end{definition}

\begin{lemma}\label{lem:thin-calculus}
A spur $\gamma*\bar\gamma$ is thin. If a thin loop is composed
with a nondecreasing Lipschitz map of $[0,1]$ onto itself, it remains
thin. For any Lipschitz $\rho:[0,1]\to[0,1]$ with
$\rho(0)=0$, $\rho(1)=1$, the paths $\gamma$ and
$\gamma\circ\rho$ are related by one generator of thin equivalence.
\end{lemma}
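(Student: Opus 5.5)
We prove the three assertions in order. Each requires an explicit relative Lipschitz homotopy whose image stays inside the image of the loop, as in \eqref{eq:relative-boundary} and \eqref{eq:thin-image}.

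\textbf{Spurs.} Let $\gamma:a\to b$ and $\lambda=\gamma*\bar\gamma$. We retract the spur along itself and set
\[
 H(s,t)=\begin{cases}\gamma\bigl(2s(1-t)\bigr),&0\le s\le\tfrac12,\\ \gamma\bigl((2-2s)(1-t)\bigr),&\tfrac12\le s\le1.\end{cases}
\]
The two formulas agree at $s=1/2$. Each piece is a composition of $\gamma$ with a Lipschitz map of the square, and a piecewise Lipschitz map on two closed convex pieces that agree on their common edge is Lipschitz. The boundary values are as required: at $t=0$ we recover \eqref{eq:concat-formula} applied to $\gamma*\bar\gamma$; at $t=1$ we get $\gamma(0)=a$; and at $s=0$ and $s=1$ the value is $a$. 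Every value has the form $\gamma(u)$, so the image condition \eqref{eq:thin-image} holds.

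\textbf{Reparametrization of thin loops.} Let $\lambda$ be thin with witness $H$, and let $\sigma$ be nondecreasing, Lipschitz and onto $[0,1]$; then $\sigma(0)=0$ and $\sigma(1)=1$. Set $H'(s,t)=H(\sigma(s),t)$. This map is Lipschitz, has the correct boundary values, and satisfies
\[
 H'([0,1]^2)=H([0,1]^2)\subseteq\lambda([0,1])=(\lambda\circ\sigma)([0,1]),
\]
where the last equality uses surjectivity of $\sigma$. Monotonicity is not actually needed here.

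\textbf{One generator between $\gamma$ and $\gamma\circ\rho$.} We must show that $\gamma*\overline{\gamma\circ\rho}$ is thin. This loop equals $\gamma\circ\tau$ with
\[
 \tau(u)=\begin{cases}2u,&u\le\tfrac12,\\ \rho(2-2u),&u\ge\tfrac12.\end{cases}
\]
Here $\tau$ is Lipschitz, $\tau(0)=\tau(1)=0$, and $\tau$ is onto $[0,1]$ because its first half already covers the interval. We use the straight-line homotopy in the parameter:
\[
 H(s,t)=\gamma\bigl((1-t)\tau(s)\bigr).
\]
It is Lipschitz and has the right boundary values: $\gamma\circ\tau$ at $t=0$, and $a$ at $t=1$, at $s=0$ and at $s=1$. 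The step I expected to be the obstacle is the image condition when $\rho$ leaves or fails to cover parts of the domain. It is resolved by the surjectivity of $\tau$: the image of $H$ lies in $\gamma([0,1])$, and $\gamma([0,1])=\gamma(\tau([0,1]))$ is exactly the image of the loop. Hence the loop is thin, and $\beta_0=\gamma$, $\beta_1=\gamma\circ\rho$ is a one-step chain in the sense of Definition~\ref{def:thin}.
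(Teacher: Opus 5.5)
Your proof is correct and follows the paper's argument essentially verbatim: the same contraction $\gamma((1-t)\min\{2s,2-2s\})$ for spurs, the precomposition $H(\sigma(s),t)$ for reparameterized thin loops, and the contraction $\gamma((1-t)\tau(s))$ of $\gamma\circ\tau=\gamma*\overline{\gamma\circ\rho}$, with the image condition coming from $\tau([0,1])=[0,1]$. One small caution: your aside that monotonicity is not needed holds only for the image inclusion, because the relative boundary condition $H(\sigma(0),t)=H(\sigma(1),t)=a$ still requires $\sigma(\{0,1\})\subseteq\{0,1\}$, and in your argument that comes from monotonicity together with surjectivity.
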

\begin{proof}
Put $\tau(r)=\min\{2r,2-2r\}$. Then
\begin{equation}\label{eq:spur}
 S_\gamma(r,t)=\gamma((1-t)\tau(r))
\end{equation}
satisfies
\[
 \begin{gathered}
 S_\gamma(r,0)=(\gamma*\bar\gamma)(r),\qquad
 S_\gamma(r,1)=\gamma(0),\\
 S_\gamma(0,t)=S_\gamma(1,t)=\gamma(0),\qquad
 S_\gamma([0,1]^2)\subseteq\gamma([0,1]),\\
 d_P(S_\gamma(r,t),S_\gamma(r',t'))
 \le\Lip(\gamma)(2|r-r'|+|t-t'|).
 \end{gathered}
\]
It is the required thin contraction. If $H$ contracts a thin loop
$\lambda$ and $\sigma$ is a nondecreasing Lipschitz surjection
fixing $0,1$, then $H(\sigma(r),t)$ contracts $\lambda\circ\sigma$.
Its image is contained in $\lambda([0,1])=(\lambda\circ\sigma)([0,1])$.

For the last assertion set
\[
 h(r)=\begin{cases}2r,&0\le r\le\tfrac12,\\
                   \rho(2-2r),&\tfrac12\le r\le1.
       \end{cases}
\]
The values at $r=1/2$ are both one, $h(0)=h(1)=0$, and
$h([0,1])=[0,1]$. With $B=2\max\{1,\Lip(\rho)\}$,
\[
 |h(r)-h(r')|\le B|r-r'|,
 \qquad
 |(1-t)h(r)-(1-t')h(r')|\le B|r-r'|+|t-t'|.
\]
The loop $\gamma*\overline{\gamma\circ\rho}$ is $\gamma\circ h$.
Thus $\gamma((1-t)h(r))$ is its relative Lipschitz contraction, and
\[
 \gamma((1-t)h(r))\in\gamma([0,1])
       =(\gamma*\overline{\gamma\circ\rho})([0,1]).
\]
\end{proof}

\begin{lemma}\label{lem:quotient}
Thin equivalence is a congruence for concatenation. Its classes form
a groupoid $\Gthin$ with
\begin{equation}\label{eq:groupoid-laws}
 [\gamma][\eta]=[\gamma*\eta],\qquad
 1_a=[c_a],\qquad [\gamma]^{-1}=[\bar\gamma].
\end{equation}
The source of $[\gamma]$ is the final point of its representative
and its range is the initial point. Every endpoint-preserving
Lipschitz reparameterization has the same thin class.
\end{lemma}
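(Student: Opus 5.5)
Thin equivalence is by definition the equivalence relation generated by the finite chains in Definition~\ref{def:thin}, so it is reflexive (the chain with $m=0$) and transitive (concatenate chains). Symmetry follows once I check that the reverse of a thin loop is thin: if $H$ contracts $\lambda$, then $H(1-r,t)$ contracts $\bar\lambda$ and has the same image. Since $\overline{\beta_{j-1}*\bar\beta_j}=\beta_j*\bar\beta_{j-1}$, reversing a chain gives a chain. For the congruence property it suffices to treat one generator, replacing $\gamma_0$ by $\gamma_1$ with $\gamma_0*\bar\gamma_1$ thin, and to show that $\gamma_0*\eta\sim\gamma_1*\eta$ and $\eta'*\gamma_0\sim\eta'*\gamma_1$.

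My main step will be a \emph{thin-loop conjugation lemma}. Let $\lambda$ be a thin loop at $b$ and $\alpha:a\to b$ Lipschitz. Then $\alpha*\lambda*\bar\alpha$ (with either bracketing) is thin. The homotopy is built piecewise: on the $\lambda$-portion use the contraction $H(\cdot,t)$, and on the $\alpha$ and $\bar\alpha$ portions keep $\alpha$ fixed. The image is contained in $\alpha([0,1])\cup\lambda([0,1])$, which is the image of the loop, and Lipschitz continuity holds because the pieces agree on the vertical seams, since $H(0,t)=H(1,t)=b$. Spurs (Lemma~\ref{lem:thin-calculus}) and a further piecewise homotopy then give a second fact: $\gamma*\lambda*\eta$ is thin-equivalent to $\gamma*\eta$ when $\lambda$ is thin at the junction.

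For right congruence I must show that $(\gamma_0*\eta)*\overline{(\gamma_1*\eta)}=(\gamma_0*\eta)*(\bar\eta*\bar\gamma_1)$ is thin. Up to reparameterization this loop is $\gamma_0*(\eta*\bar\eta)*\bar\gamma_1$. I will contract the spur $\eta*\bar\eta$ inside it by a thin homotopy with image in $\eta([0,1])$, keeping the rest fixed. Then I contract the resulting $\gamma_0*\bar\gamma_1$, up to reparameterization, by its given thin homotopy, and concatenate the two homotopies in $t$. The total image lies in the image of the original loop. Reparameterizations are handled by the second statement of Lemma~\ref{lem:thin-calculus}, which needs nondecreasing surjections; the rebracketing maps are piecewise affine, nondecreasing and Lipschitz. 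Left congruence is the conjugation lemma applied to $\eta'*(\gamma_0*\bar\gamma_1)*\bar\eta'$, followed by the same reparameterization.

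With congruence established, the groupoid laws are checked as follows. Associativity, the unit laws $c_a*\gamma\sim\gamma$, and the reparameterization claim all follow from the last assertion of Lemma~\ref{lem:thin-calculus}: each of $(\gamma*\eta)*\zeta$, $\gamma*(\eta*\zeta)$ and $c_a*\gamma$ is a Lipschitz reparameterization fixing endpoints of one another, or of $\gamma$. Inverses follow because $\gamma*\bar\gamma$ is a spur and hence thin, so $\gamma*\bar\gamma$ is equivalent to $c_a$ via the one-step chain $(\gamma*\bar\gamma)*\bar c_a$. That loop is a reparameterization of a spur and is still thin. The source and range conventions are bookkeeping consistent with the direction $\mu_{xy}:M_y\to M_x$.

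I expect the main obstacle to be the Lipschitz control of the glued homotopies together with the image condition \eqref{eq:thin-image}. Each piece must be Lipschitz in the Euclidean metric. The seams must match, which uses the relative boundary conditions. When homotopies are stacked in $t$, the affine rescalings only multiply the Lipschitz constants by bounded factors.
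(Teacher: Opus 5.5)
Your proposal is correct and follows essentially the paper's own proof. For right composition you contract the middle spur $\eta*\bar\eta$ and then stack the given thin contraction; for left composition you hold the outer path fixed while contracting the inner thin loop; and you use piecewise affine reallocations together with Lemma~\ref{lem:thin-calculus} for associativity, units, inverses and reparameterization. One step needs to be made explicit: the conjugation homotopy you describe ends at the paused spur $\alpha*c_b*\bar\alpha$, not at the constant loop, so, as the paper does, you must stack a contraction of this paused spur (thin by Lemma~\ref{lem:thin-calculus}) to finish.
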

\begin{proof}
The generator is reflexive by \eqref{eq:spur}. If
$H(r,t)$ contracts $\alpha*\bar\beta$, then $H(1-r,t)$ contracts
$\beta*\bar\alpha$. Thus the generator is symmetric, and its
finite-chain closure is an equivalence relation.

Fixed finite path words may be given any positive time allocations.
For two allocations $(u_j)$ and $(v_j)$, the map that sends each
$u_j$ to $v_j$ and is affine between successive breakpoints is a
Lipschitz increasing homeomorphism. Lemma~\ref{lem:thin-calculus}
identifies the associated classes. Composing a thin contraction with
this homeomorphism preserves thinness itself.

Suppose $\lambda=\alpha*\bar\beta$ is thin, where
$\alpha,\beta:a\to b$, and let $\eta:b\to c$.
On a four-stage allocation, the loop
$(\alpha*\eta)*\overline{\beta*\eta}$ has factors
$\alpha,\eta,\bar\eta,\bar\beta$. Its first homotopy is
\begin{equation}\label{eq:right-whisker}
 U(r,t)=\begin{cases}
  \alpha(4r),&0\le r\le\tfrac14,\\
  S_\eta(2r-\tfrac12,t),&\tfrac14\le r\le\tfrac34,\\
  \bar\beta(4r-3),&\tfrac34\le r\le1.
 \end{cases}
\end{equation}
The formulas coincide at the interfaces. At $t=1$, $U(r,1)$ equals
$\lambda(\sigma(r))$, where
\[
 \sigma(r)=\begin{cases}
 2r,&0\le r\le\tfrac14,\\
 \tfrac12,&\tfrac14\le r\le\tfrac34,\\
 2r-1,&\tfrac34\le r\le1.
 \end{cases}
\]
If $H$ contracts $\lambda$, the map
\[
 V(r,t)=\begin{cases}
 U(r,2t),&0\le t\le\tfrac12,\\
 H(\sigma(r),2t-1),&\tfrac12\le t\le1
 \end{cases}
\]
contracts the original loop. Its image lies in
$\alpha([0,1])\cup\beta([0,1])\cup\eta([0,1])$, exactly the
image of that loop.

For left composition let $\xi:d\to a$. The corresponding loop
has word $\xi,\alpha,\bar\beta,\bar\xi$. Replace the middle
part in \eqref{eq:right-whisker} by $H(2r-1/2,t)$ and the outside
parts by $\xi(4r)$ and $\bar\xi(4r-3)$. At the end this is
$\xi*\bar\xi$ with a constant pause, which contracts by
\eqref{eq:spur} and Lemma~\ref{lem:thin-calculus}. Its image
throughout is contained in the original four-stage loop image.
These piecewise maps are Lipschitz: a segment in the square is split
at the finitely many rectangular interfaces, the formulas agree
there, and their Lipschitz bounds add along the segment. They also
have the fixed boundary values prescribed in
\eqref{eq:relative-boundary}.

We have proved left and right compatibility for each generator.
Applying these operations to every link proves compatibility for
thin chains. The two bracketings of three paths are two positive
allocations of the same three-stage word. They therefore have the
same class. Unit laws are constant pauses, covered by
Lemma~\ref{lem:thin-calculus}. Both spurs contract, and their
constant pauses contract, so
\[
 [\gamma][\bar\gamma]=[c_{\gamma(0)}],\qquad
 [\bar\gamma][\gamma]=[c_{\gamma(1)}].
\]
An associative category in which every morphism has a two-sided
inverse is a groupoid, yielding \eqref{eq:groupoid-laws}.
\end{proof}

\begin{corollary}\label{cor:subpath-class}
For every $s,u,t\in[0,1]$,
\begin{equation}\label{eq:subpath-class}
 [\gamma_{s,t}]=[\gamma_{s,u}][\gamma_{u,t}].
\end{equation}
\end{corollary}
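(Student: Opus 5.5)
We reduce \eqref{eq:subpath-class} to the reparameterization clause of Lemma~\ref{lem:quotient} (equivalently, the last assertion of Lemma~\ref{lem:thin-calculus}). Here $\gamma_{s,t}$ runs from $\gamma(s)$ to $\gamma(t)$, so both sides are classes with range $\gamma(s)$ and source $\gamma(t)$. The plan is to exhibit $\gamma_{s,u}*\gamma_{u,t}$ as $\gamma_{s,t}$ composed with a Lipschitz map of $[0,1]$ fixing the endpoints, or else to reduce to a single path of which both sides are reparameterizations.

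Define $\rho:[0,1]\to\R$ piecewise affinely:
\[
 \rho(v)=s+2v(u-s)\quad(0\le v\le\tfrac12),\qquad
 \rho(v)=u+(2v-1)(t-u)\quad(\tfrac12\le v\le1).
\]
The two pieces agree at $v=1/2$, and by \eqref{eq:concat-formula} we have $\gamma_{s,u}*\gamma_{u,t}=\gamma\circ\rho$. The map $\rho$ takes values in $[0,1]$, is Lipschitz, and satisfies $\rho(0)=s$ and $\rho(1)=t$.

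In the generic case $s\neq t$, write $\rho=s+(t-s)\tilde\rho$. Then $\tilde\rho$ is Lipschitz with $\tilde\rho(0)=0$ and $\tilde\rho(1)=1$, and $\gamma\circ\rho=\gamma_{s,t}\circ\tilde\rho$. The only subtlety is that $\tilde\rho$ must map $[0,1]$ into $[0,1]$ for Lemma~\ref{lem:thin-calculus} to apply literally, and this fails when $u$ lies outside the interval between $s$ and $t$.

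To handle every position of $u$ uniformly, I would instead pass through the whole path. Let $m=\min\{s,u,t\}$ and $M=\max\{s,u,t\}$, and assume $m<M$; otherwise all three paths are constant and the identity is the unit law. Put $\beta=\gamma_{m,M}$. Each of $\gamma_{s,u}$, $\gamma_{u,t}$ and $\gamma_{s,t}$ equals $\beta\circ\alpha$ for an affine $\alpha$ with values in $[0,1]$, possibly decreasing or constant.

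\begin{itemize}
\item When $\alpha$ is increasing, Lemma~\ref{lem:thin-calculus} gives that $\beta\circ\alpha$ is an endpoint-preserving reparameterization of a restriction of $\beta$.
\item When $\alpha$ is decreasing, $\beta\circ\alpha$ is a reparameterization of a reversed path, and Lemma~\ref{lem:quotient} gives $[\bar\eta]=[\eta]^{-1}$.
\item More directly, any path of the form $\beta\circ\alpha$ with $\alpha(0)=\alpha_0$ and $\alpha(1)=\alpha_1$ is thin equivalent to $\beta_{\alpha_0,\alpha_1}$.
\end{itemize}

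The cleanest route is therefore a slightly generalized reparameterization claim. For Lipschitz $\alpha,\alpha':[0,1]\to[0,1]$ with equal endpoint values, the loop $(\beta\circ\alpha)*\overline{\beta\circ\alpha'}$ is thin. The contraction would be
\[
 H(r,v)=\beta\bigl((1-v)h(r)+v\,\alpha(0)\bigr),
\]
where $h$ is the concatenation of $\alpha$ and the reversal of $\alpha'$. I would check that $H$ is Lipschitz with image in $\beta(\text{range of }h)$. Since $h$ takes values in the interval between $\alpha(0)$ and the extremes of $h$ along the loop, this range is exactly the image of the loop. I would also check that the boundary values are constant: $h(0)=h(1)=\alpha(0)$, so $H(0,v)=H(1,v)=\beta(\alpha(0))$.

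This argument mirrors the proof of the last assertion of Lemma~\ref{lem:thin-calculus}. Applying it with $\alpha'=\tilde\rho$-type maps gives $[\gamma\circ\rho]=[\gamma_{s,t}]$, and Lemma~\ref{lem:quotient} gives $[\gamma_{s,u}*\gamma_{u,t}]=[\gamma_{s,u}][\gamma_{u,t}]$.

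The main obstacle is the image condition \eqref{eq:thin-image}. The straight-line homotopy toward $\alpha(0)$ stays within the parameter values swept by the loop only because the range of $h$ is an interval containing $\alpha(0)$. I expect this to be the one step requiring care; the rest follows from previously established lemmas.
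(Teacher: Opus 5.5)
Your proof is correct, but it takes a different route from the paper's.

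\textbf{The paper's route.} The paper proves only the ordered case directly. When $u$ lies between $s$ and $t$, the path $\gamma_{s,u}*\gamma_{u,t}$ is an increasing piecewise affine reparameterization of $\gamma_{s,t}$, so Lemma~\ref{lem:thin-calculus} applies as stated. The other two orderings then follow purely algebraically from the ordered identity for a permuted triple. For example, when $t$ lies between $s$ and $u$, the paper takes $[\gamma_{s,u}]=[\gamma_{s,t}][\gamma_{t,u}]$ and multiplies on the right by $[\gamma_{u,t}]=[\gamma_{t,u}]^{-1}$. This uses the exact identity $\gamma_{u,t}=\overline{\gamma_{t,u}}$ and the inverse law of Lemma~\ref{lem:quotient}.

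\textbf{Your route.} You instead strengthen the last assertion of Lemma~\ref{lem:thin-calculus}. You allow arbitrary Lipschitz $\alpha,\alpha':[0,1]\to[0,1]$ with equal endpoint values, not only maps fixing $0$ and $1$. You prove this with the contraction $\beta((1-v)h(r)+v\,h(0))$. The image condition \eqref{eq:thin-image} holds because $h([0,1])$ is an interval containing $h(0)$, so every segment toward $h(0)$ stays inside the parameter values swept by the loop. This is exactly the point you flagged, and your treatment of it is sound.

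\textbf{Comparison.} Your approach shows that $\gamma_{s,u}*\gamma_{u,t}$ and $\gamma_{s,t}$ are related by a single generator of thin equivalence for every position of $u$, coincident parameters included. The cost is that you re-prove a contraction, whereas the paper reuses groupoid algebra together with the lemma exactly as stated.

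\textbf{A simplification.} The detour through $\beta=\gamma_{m,M}$ and the bulleted case distinctions are unnecessary. Your piecewise affine $\rho$ already maps into $[0,1]$, so you can take $\beta=\gamma$, $\alpha=\rho$, and $\alpha'(v)=s+(t-s)v$ directly. The product rule $[\gamma_{s,u}*\gamma_{u,t}]=[\gamma_{s,u}][\gamma_{u,t}]$ from Lemma~\ref{lem:quotient} then finishes the argument.
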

\begin{proof}
When $u$ lies between $s,t$, the concatenation is an increasing
piecewise affine reparameterization of $\gamma_{s,t}$. If $t$ lies
between $s,u$, the ordered identity gives
$[\gamma_{s,u}]=[\gamma_{s,t}][\gamma_{t,u}]$, and multiplication
on the right by $[\gamma_{u,t}]$ gives \eqref{eq:subpath-class}.
If $s$ lies between $u,t$, use
$[\gamma_{u,t}]=[\gamma_{u,s}][\gamma_{s,t}]$ and multiply on the
left by $[\gamma_{s,u}]$. Coincident parameters give identity or
inverse identities in \eqref{eq:groupoid-laws}.
\end{proof}

The path quotient is now defined. To continue our path to the proof of the full conjecture, we now construct transport along its
representatives by taking partition limits, and prove that the limits
respect composition and reversal.

\section{Pathwise sewing}
\label{sec:path}

The point-deletion argument for interval sewing
\cite[Theorem~3.6 and Lemmas~3.7--3.8]{CM} admits estimates in terms
of the length of each restricted path. We give the argument for both
orientations and retain one global Lipschitz bound during refinement.
The constants $p$, $C$, and $K$ are those of \eqref{eq:constants}.

\begin{lemma}[Additive length control]\label{lem:length-control}
Fix a Lipschitz path $\gamma:[0,1]\to P$ and write $L=\Lip(\gamma)$.
For $a<b$, put $v(a,b)=\len(\gamma|_{[a,b]})$, and set $v(a,a)=0$.
For $a\le b\le c$,
\begin{equation}\label{eq:length-additivity}
 v(a,c)=v(a,b)+v(b,c),\qquad
 d_P(\gamma(a),\gamma(b))\le v(a,b)\le L(b-a).
\end{equation}
Consequently $V(t)=v(0,t)$ is nondecreasing and $L$-Lipschitz, and
\[
 \ell_{s,t}=|V(t)-V(s)|,\qquad
 d_P(\gamma(s),\gamma(t))\le|V(t)-V(s)|.
\]
\end{lemma}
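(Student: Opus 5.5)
The plan is to prove the additivity identity directly from the definition of $\len$ as a supremum of partition sums, and then derive the remaining statements from it together with the earlier facts recorded in \eqref{eq:subpath-geometry}.

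For $a<b<c$, I would first show $v(a,c)\ge v(a,b)+v(b,c)$. Take partitions $D_1$ of $[a,b]$ and $D_2$ of $[b,c]$, transported through the affine bijections already used for \eqref{eq:subpath-geometry}. Their union is a partition of $[a,c]$, and its sampled sum is exactly the sum of the two sampled sums. Taking suprema over $D_1$ and $D_2$ separately gives the inequality. For the reverse inequality, take any partition $D$ of $[a,c]$ and adjoin the point $b$. By the triangle inequality (refinement increases $S$), the sum does not decrease. The refined sum then splits at $b$ into a sampled sum on $[a,b]$ and one on $[b,c]$, so it is bounded by $v(a,b)+v(b,c)$. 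The degenerate cases $a=b$ or $b=c$ are immediate from the convention $v(a,a)=0$.

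The inequality $d_P(\gamma(a),\gamma(b))\le v(a,b)$ is the trivial one-interval partition. The bound $v(a,b)\le L(b-a)$ holds because every sampled sum satisfies $\sum d_P\le L\sum(t_j-t_{j-1})$. This also shows $v$ is finite, which the additivity argument needs so that no $\infty-\infty$ issues arise.

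The consequences then follow quickly. For $s\le t$, additivity gives $V(t)-V(s)=v(s,t)\ge0$, so $V$ is nondecreasing, and $v(s,t)\le L(t-s)$ gives that $V$ is $L$-Lipschitz. By \eqref{eq:subpath-geometry}, $\ell_{s,t}=v(s\wedge t,s\vee t)=|V(t)-V(s)|$, and the distance bound is the second inequality of \eqref{eq:length-additivity}.

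The only mildly delicate point is bookkeeping: the earlier definition of $\len$ uses partitions of $[0,1]$ for the reparameterized path, whereas here the partitions live on $[a,b]$. I would handle this by invoking the partition bijection $u\mapsto a+(b-a)u$ already noted after \eqref{eq:subpath-geometry}. Apart from that there is no real obstacle.
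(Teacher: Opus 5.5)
Your proposal is correct and follows essentially the same route as the paper. You use the bound $L(b-a)$ and the one-interval lower bound directly from the definition, get subadditivity by inserting $b$ and splitting, and get superadditivity by taking the union of partitions of $[a,b]$ and $[b,c]$; where you take the two suprema directly, the paper uses $\eta$-approximants, which is equivalent because all quantities are finite. The statements about $V$ and $\ell_{s,t}$ then follow from additivity for $s\le t$ together with symmetry, exactly as in the paper.
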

\begin{proof}
Every polygonal sum on $[a,b]$ is bounded by
$L\sum_j(t_j-t_{j-1})=L(b-a)$, and the endpoint subdivision gives
the lower bound in \eqref{eq:length-additivity}.
For any subdivision of $[a,c]$, insertion of $b$ cannot decrease
its polygonal sum, by the triangle inequality. The resulting sum
splits at $b$ and is at most $v(a,b)+v(b,c)$.
Taking the supremum gives $v(a,c)\le v(a,b)+v(b,c)$.
Conversely, let $\eta>0$. By the definition of these finite suprema,
there are subdivisions of $[a,b]$ and $[b,c]$ with polygonal sums
greater than $v(a,b)-\eta$ and $v(b,c)-\eta$, respectively.
Their union is a subdivision of $[a,c]$, so
$v(a,c)\ge v(a,b)+v(b,c)-2\eta$.
Since this holds for every $\eta>0$, additivity holds.
For $s\le t$ it gives $V(t)-V(s)=v(s,t)$, which belongs to
$[0,L(t-s)]$. The conclusions for arbitrary $s,t$ use symmetry.
\end{proof}

An oriented subdivision $D=(t_0,\ldots,t_n)$ from $s$ to $t$
has increasing points when $s<t$ and decreasing points when $s>t$.
For $s\ne t$ define
\begin{align*}
 P_\gamma(D)&=\mu_{\gamma(t_0)\gamma(t_1)}\cdots
                 \mu_{\gamma(t_{n-1})\gamma(t_n)},\qquad
 \mesh(D)=\max_j|t_j-t_{j-1}|,\\
 \lambda_j(D)&=|V(t_j)-V(t_{j-1})|,\qquad
 \omega_\gamma(D)=\max_j\lambda_j(D).
\end{align*}
The quantity $\omega_\gamma(D)$ is the largest length of a path
restriction between consecutive subdivision points. Additivity gives
\begin{equation}\label{eq:length-mesh}
 \sum_{j=1}^n\lambda_j(D)=\ell_{s,t},\qquad
 \omega_\gamma(D)\le L\mesh(D).
\end{equation}
Refinement cannot increase $\omega_\gamma(D)$.
For $s=t$ there is one degenerate subdivision, whose product is
$\Id_{M_{\gamma(s)}}$ and whose two meshes are zero.

\begin{lemma}[Deletion and refinement]\label{lem:intrinsic-refinement}
For every oriented subdivision $D$ from $s$ to $t$,
\begin{equation}\label{eq:delete}
 \dist_{\gamma(s)\gamma(t)}
 (P_\gamma(D),\mu_{\gamma(s)\gamma(t)})
 \le K g(\ell_{s,t})\ell_{s,t}^{p}.
\end{equation}
If $D'$ refines $D$, then
\begin{equation}\label{eq:refine}
 \begin{split}
 \dist_{\gamma(s)\gamma(t)}(P_\gamma(D'),P_\gamma(D))
 &\le K g(\ell_{s,t})\sum_{j=1}^n\lambda_j(D)^p\\
 &\le K g(\ell_{s,t})\ell_{s,t}\omega_\gamma(D)^{p-1}.
 \end{split}
\end{equation}
\end{lemma}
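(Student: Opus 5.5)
We prove \eqref{eq:delete} by the classical Young point-deletion argument, carried out with respect to the length increments $\lambda_j(D)$ rather than parameter increments. Write $D=(t_0,\ldots,t_n)$ and work with the sampled points $x_j=\gamma(t_j)$. Removing an interior point $t_k$ ($0<k<n$) replaces the factor $\mu_{x_{k-1}x_k}\mu_{x_kx_{k+1}}$ by $\mu_{x_{k-1}x_{k+1}}$. By the side-composition estimates \eqref{eq:side-composition} and \eqref{eq:defect}, the change in the total product is at most
\[
 \Lip(\mu_{x_0x_1}\cdots\mu_{x_{k-2}x_{k-1}})\sum_r C_r d_P(x_{k+1},x_k)^{a_r}d_P(x_k,x_{k-1})^{b_r}.
\]
We control the prefix by \eqref{eq:chain}. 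The prefix chain has total length at most $\ell_{s,t}$, by Lemma~\ref{lem:length-control}, and $g$ is nondecreasing, so the factor is at most $g(\ell_{s,t})$. This is the global prefix bound, and it persists through every deletion, because each intermediate subdivision is still a subchain sampled from $\gamma$ between $s$ and $t$. For the defect term, $d_P(x_{k+1},x_k)\le\lambda_{k+1}$ and $d_P(x_k,x_{k-1})\le\lambda_k$. Since $a_r+b_r=p$, we get $\lambda_{k+1}^{a_r}\lambda_k^{b_r}\le(\lambda_k+\lambda_{k+1})^p$. The deletion cost is therefore at most $C g(\ell_{s,t})(\lambda_k+\lambda_{k+1})^p$. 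Both orientations are handled identically, because all quantities involved are absolute values of $V$-increments.

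For the choice of point, when $n\ge 2$ the $n-1$ adjacent sums $\lambda_k+\lambda_{k+1}$ add up to at most $2\ell_{s,t}$ by \eqref{eq:length-mesh}. Some $k$ therefore has $\lambda_k+\lambda_{k+1}\le 2\ell_{s,t}/(n-1)$. After deleting $t_k$, the merged increment equals $\lambda_k+\lambda_{k+1}$ by additivity, so the lengths of the new subdivision again sum to $\ell_{s,t}$. Iterating down to the trivial subdivision $(s,t)$, whose product is $\mu_{\gamma(s)\gamma(t)}$, the total cost is at most
\[
 Cg(\ell_{s,t})\sum_{m\ge1}\left(\frac{2\ell_{s,t}}{m}\right)^p=2^pC\zeta(p)g(\ell_{s,t})\ell_{s,t}^p .
\]
This is \eqref{eq:delete}. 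The degenerate case $s=t$ is trivial, since both sides are zero.

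For \eqref{eq:refine}, group the refinement $D'$ into the blocks lying in the intervals $[t_{j-1},t_j]$ of $D$. Then $P_\gamma(D')=A_1\cdots A_n$, where $A_j$ is the product of $D'$ over block $j$, and $P_\gamma(D)=B_1\cdots B_n$ with $B_j=\mu_{x_{j-1}x_j}$. We apply the hybrid estimate \eqref{eq:hybrid}. Each prefix $A_1\cdots A_{j-1}$ is a chain product sampled along $\gamma$ from $s$ to $t_{j-1}$, so its Lipschitz constant is at most $g(\ell_{s,t})$ by \eqref{eq:chain}. By \eqref{eq:delete} applied to the subpath between $t_{j-1}$ and $t_j$,
\[
 \dist(A_j,B_j)\le Kg(\lambda_j)\lambda_j^p\le Kg(\ell_{s,t})\lambda_j^p .
\]
The distances are finite, as Lemma~\ref{lem:finite-defects} also guarantees.

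Note that the prefix factor in \eqref{eq:hybrid} already supplies $g(\ell_{s,t})$, while the block estimate supplies another $g$. Naively this yields $g(\ell_{s,t})^2$. This double count is the main obstacle. To avoid it, we do not use \eqref{eq:hybrid} with \eqref{eq:delete} as a black box. Instead we perform the deletions of the points of $D'\setminus D$ directly inside the full product $P_\gamma(D')$, block by block, using the same greedy choice within each block. Each single deletion costs at most $C g(\ell_{s,t})(\text{merged length})^p$, where the factor $g(\ell_{s,t})$ is the single global prefix bound noted above, and the prefix always runs from $s$. Summing the greedy series within block $j$ gives $Kg(\ell_{s,t})\lambda_j^p$, and summing over $j$ gives the first line of \eqref{eq:refine}. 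The second line follows from
\[
 \sum_j\lambda_j^p\le\omega_\gamma(D)^{p-1}\sum_j\lambda_j=\omega_\gamma(D)^{p-1}\ell_{s,t}.
\]
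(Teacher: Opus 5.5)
Your proposal is correct and follows essentially the paper's proof. It uses greedy deletion of the point minimizing the adjacent length sum, with the single global prefix bound $g(\ell_{s,t})$ valid for every monotone subchain; for refinement it deletes block by block inside the full product $P_\gamma(D')$ rather than invoking \eqref{eq:hybrid}, which is exactly how the paper avoids the factor $g(\ell_{s,t})^2$. The only detail you leave implicit is the empty prefix (deleting the first interior point), whose Lipschitz constant is $1\le g(0)\le g(\ell_{s,t})$, as the paper notes.
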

\begin{proof}
Write $\ell=\ell_{s,t}$ and $G=g(\ell)$.
If $\ell=0$, Lemma~\ref{lem:length-control} makes $\gamma$ constant
on the interval in question, and every product is an identity.
Suppose $\ell>0$. Every monotone subchain in this interval has
image chain length at most $\ell$, by \eqref{eq:length-additivity}.
Thus every prefix occurring after any sequence of deletions has
Lipschitz constant at most $G$, by \eqref{eq:chain}.
For an empty prefix the same bound holds since $1\le g(0)\le G$.

Consider a current subdivision $U=(u_0,\ldots,u_m)$ with $m\ge2$,
and put $q_j=|V(u_j)-V(u_{j-1})|$.
Since $\sum_{j=1}^m q_j=\ell$,
\[
 \sum_{j=1}^{m-1}(q_j+q_{j+1})\le2\ell.
\]
Let $j$ be the smallest index minimizing $q_j+q_{j+1}$.
Then $q_j+q_{j+1}\le2\ell/(m-1)$.
Deletion of $u_j$ replaces the consecutive factors
$\mu_{\gamma(u_{j-1})\gamma(u_j)}
 \mu_{\gamma(u_j)\gamma(u_{j+1})}$
by $\mu_{\gamma(u_{j-1})\gamma(u_{j+1})}$.
The defect of this replacement is bounded by
\[
 \sum_r C_r q_{j+1}^{a_r}q_j^{b_r}
 \le C(q_j+q_{j+1})^p
 \le C\left(\frac{2\ell}{m-1}\right)^p.
\]
The common right suffix does not increase the comparison distance.
The common left prefix increases it by at most $G$.
Repeating until only the two endpoints remain therefore gives
\[
 \dist(P_\gamma(D),\mu_{\gamma(s)\gamma(t)})
 \le 2^p CG\ell^p\sum_{k=1}^{n-1}k^{-p}
 \le KG\ell^p,
\]
which proves \eqref{eq:delete}. Only absolute increments of $V$
occur, so this calculation covers both orientations.

For refinement, retain the points of $D$ and delete the other points
of $D'$ in the successive coarse intervals.
Inside the $j$th coarse interval the total length is
$\lambda_j(D)$, by additivity. If $m$ subintervals remain in that
block, an interior point has adjacent length sum at most
$2\lambda_j(D)/(m-1)$.
Its removal has error at most
$GC(2\lambda_j(D)/(m-1))^p$ in the full product.
The prefix is still a monotone subchain of the original interval,
even after deletions in other blocks, so the same $G$ applies.
Summing first within each block and then over all blocks gives
the first inequality in \eqref{eq:refine}.
The second is the consequence
\[
 \sum_j\lambda_j(D)^p
 \le\omega_\gamma(D)^{p-1}\sum_j\lambda_j(D)
 =\ell\omega_\gamma(D)^{p-1}.
\]
\end{proof}

\begin{lemma}[Uniform mesh limit]\label{lem:mesh-limit}
For each $s,t$, the products $P_\gamma(D)$ converge in
$\mathcal C_{\gamma(s)\gamma(t)}$ as $\omega_\gamma(D)\to0$.
Write their limit as $\phi^\gamma_{s,t}$.
In particular, convergence holds whenever $\mesh(D)\to0$.
For every oriented subdivision $D$,
\begin{align}
 \dist(\phi^\gamma_{s,t},\mu_{\gamma(s)\gamma(t)})
 &\le Kg(\ell_{s,t})\ell_{s,t}^{p}
 \le KL^p g(\ell_{s,t})|t-s|^p,\label{eq:local}\\
 \dist(P_\gamma(D),\phi^\gamma_{s,t})
 &\le Kg(\ell_{s,t})\sum_j\lambda_j(D)^p
 \le Kg(\ell_{s,t})\ell_{s,t}\omega_\gamma(D)^{p-1},
 \label{eq:intrinsic-rate}\\
 \dist(P_\gamma(D),\phi^\gamma_{s,t})
 &\le Kg(\ell_{s,t})\ell_{s,t}(L\mesh(D))^{p-1}
 \le KL^p g(\ell_{s,t})|t-s|\mesh(D)^{p-1},\label{eq:rate}\\
 \Lip(\phi^\gamma_{s,t})&\le g(\ell_{s,t}).\label{eq:path-lip}
\end{align}
The distances in these inequalities are taken in
$\mathcal C_{\gamma(s)\gamma(t)}$.
\end{lemma}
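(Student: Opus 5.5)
The plan is to derive everything from Lemma~\ref{lem:intrinsic-refinement} by a Cauchy argument along common refinements, and then pass the estimates to the limit using Lemma~\ref{lem:maps}. The case $s=t$ is trivial: there is only the degenerate subdivision, so $\phi^\gamma_{s,s}=\Id$ and all bounds read $0\le0$ or $1\le g(0)$. Likewise, if $\ell_{s,t}=0$, then $\gamma$ is constant on the interval and every product is the identity. So I assume $s\ne t$ and $\ell=\ell_{s,t}>0$, and write $G=g(\ell)$.

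\emph{Cauchy property.} Take two oriented subdivisions $D_1,D_2$ from $s$ to $t$, and let $D_3$ be their common refinement (the union of the points, ordered in the given orientation). Refinement does not increase $\omega_\gamma$. By \eqref{eq:refine} applied to $D_3$ over each $D_i$ and the triangle inequality,
\[
\dist(P_\gamma(D_1),P_\gamma(D_2))\le KG\ell\bigl(\omega_\gamma(D_1)^{p-1}+\omega_\gamma(D_2)^{p-1}\bigr).
\]
Since $p>1$, this tends to $0$ as both $\omega$'s tend to $0$. Every product $P_\gamma(D)$ is a composition of continuous maps, so it lies in $\mathcal C_{\gamma(s)\gamma(t)}$, and its distances to $\mu_{\gamma(s)\gamma(t)}$ are finite by Lemma~\ref{lem:finite-defects}. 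Hence all products lie in one galaxy of the extended metric space. Now pick any sequence $D_n$ with $\omega_\gamma(D_n)\to0$; for instance, uniform subdivisions work, by \eqref{eq:length-mesh}. This sequence is Cauchy, and the completeness in Lemma~\ref{lem:maps} gives a limit $\phi^\gamma_{s,t}$. The displayed bound shows that the limit is the same for every net with $\omega\to0$. The mesh statement follows from $\omega_\gamma(D)\le L\mesh(D)$.

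\emph{Estimates.} For \eqref{eq:intrinsic-rate}, fix $D$ and take refinements $D'_n$ of $D$ with $\omega\to0$. Then \eqref{eq:refine} gives
\[
\dist(P_\gamma(D'_n),P_\gamma(D))\le KG\sum_j\lambda_j(D)^p,
\]
and letting $n\to\infty$ (distance to a limit is lower semicontinuous, by the triangle inequality) yields the first inequality. The second follows as in Lemma~\ref{lem:intrinsic-refinement}. Inequality \eqref{eq:rate} then follows from $\omega_\gamma\le L\mesh$ together with $\ell\le L|t-s|$ from \eqref{eq:subpath-geometry}. For \eqref{eq:local}, pass to the limit in \eqref{eq:delete}, using $\ell^p\le L^p|t-s|^p$. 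For \eqref{eq:path-lip}, every $P_\gamma(D)$ is a monotone chain with chain length at most $\ell$ by \eqref{eq:length-additivity}. Since $g$ is nondecreasing, \eqref{eq:chain} gives $\Lip(P_\gamma(D))\le G$, and \eqref{eq:lip-limit} transfers this bound to the limit.

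\emph{Main obstacle.} The main point needing care is convergence over arbitrary subdivisions, not just nested ones. This needs the common-refinement bound above, and it needs the observation that, because of the extended metric, all products lie in one galaxy. The orientation issue needs no separate treatment, since \eqref{eq:refine} is already stated for both orientations.
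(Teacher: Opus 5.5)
Your proposal is correct and follows essentially the same route as the paper: the common-refinement Cauchy bound from \eqref{eq:refine}, completeness applied to a uniform or dyadic sequence lying in one galaxy, refinements of a fixed $D$ with vanishing length mesh for the sharper bound \eqref{eq:intrinsic-rate}, and passage to the limit in \eqref{eq:delete} and in the chain Lipschitz bound. The only differences are cosmetic. You obtain galaxy membership from Lemma~\ref{lem:finite-defects}, where the paper uses \eqref{eq:delete}, and you cite \eqref{eq:lip-limit}, where the paper repeats the triangle-inequality argument.
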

\begin{proof}
The zero-length case consists of identity products, so assume
$\ell=\ell_{s,t}>0$ and put $G=g(\ell)$.
The ordered union $R$ of two subdivisions $D,E$ refines both.
By \eqref{eq:refine} and the triangle inequality,
\begin{equation}\label{eq:cauchy}
 \dist(P_\gamma(D),P_\gamma(E))
 \le KG\ell\bigl(\omega_\gamma(D)^{p-1}
                      +\omega_\gamma(E)^{p-1}\bigr).
\end{equation}
Let $D_k$ be the oriented subdivision with $2^k$ equal parameter
intervals. Then $\omega_\gamma(D_k)\le L|t-s|2^{-k}$.
Equation~\eqref{eq:cauchy} makes $(P_\gamma(D_k))_k$ Cauchy.
By \eqref{eq:delete}, all its terms belong to the galaxy of
$\mu_{\gamma(s)\gamma(t)}$ in the complete map space.
Their limit $\phi^\gamma_{s,t}$ is therefore defined there.
For fixed $D$, letting $k\to\infty$ in \eqref{eq:cauchy} gives
\[
 \dist(P_\gamma(D),\phi^\gamma_{s,t})
 \le KG\ell\omega_\gamma(D)^{p-1}.
\]
This proves convergence uniformly over all subdivisions with
sufficiently small length mesh. It proves parameter mesh convergence
by \eqref{eq:length-mesh} as well.

For the sharper sum in \eqref{eq:intrinsic-rate}, keep $D$ fixed
and use its ordered unions $R_k$ with $D_k$.
Then $\omega_\gamma(R_k)\le\omega_\gamma(D_k)\to0$ and
$R_k$ refines $D$. Passing to the limit in
\eqref{eq:refine} yields the first inequality in
\eqref{eq:intrinsic-rate}. Its second inequality was proved in the
deletion lemma. Equation~\eqref{eq:local} follows from
\eqref{eq:delete} and $\ell\le L|t-s|$.
The two inequalities in \eqref{eq:rate} use
$\omega_\gamma(D)\le L\mesh(D)$ and $\ell\le L|t-s|$.

Every $P_\gamma(D_k)$ has Lipschitz constant at most $G$.
For $v,w\in M_{\gamma(t)}$ at finite distance, put
$\delta_k=\dist(P_\gamma(D_k),\phi^\gamma_{s,t})$.
The triangle inequality gives
\[
 d_{\gamma(s)}(\phi^\gamma_{s,t}(v),\phi^\gamma_{s,t}(w))
 \le2\delta_k+Gd_{\gamma(t)}(v,w).
\]
Since $\delta_k\to0$, this proves \eqref{eq:path-lip}.
\end{proof}

\begin{proposition}[Two-sided pathwise sewing]\label{prop:path}
The maps of Lemma~\ref{lem:mesh-limit} satisfy its local comparison,
mesh convergence, and Lipschitz estimates. For every $s,u,t\in[0,1]$,
\begin{equation}\label{eq:fullflow}
 \phi^\gamma_{s,s}=\Id_{M_{\gamma(s)}},\qquad
 \phi^\gamma_{s,t}=\phi^\gamma_{s,u}\phi^\gamma_{u,t},\qquad
 \phi^\gamma_{t,s}=(\phi^\gamma_{s,t})^{-1}.
\end{equation}
\end{proposition}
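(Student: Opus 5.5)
The identity $\phi^\gamma_{s,s}=\Id$ is immediate: there is one degenerate subdivision, and its product is the identity. The first sentence of the proposition simply restates Lemma~\ref{lem:mesh-limit}. I would prove composition in two stages: first for monotone triples, then for arbitrary ones using inverses.

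\emph{Monotone composition.} Suppose $u$ lies between $s$ and $t$, so that $s,u,t$ are monotone in one orientation. Take oriented subdivisions $D_k$ from $s$ to $u$ and $E_k$ from $u$ to $t$, both with mesh tending to zero. Their concatenation $D_k\cup E_k$ is an oriented subdivision from $s$ to $t$, its mesh tends to zero, and
\[
 P_\gamma(D_k\cup E_k)=P_\gamma(D_k)P_\gamma(E_k).
\]
By Lemma~\ref{lem:mesh-limit}, the left side converges to $\phi^\gamma_{s,t}$. For the right side, apply \eqref{eq:composition} with $A=P_\gamma(D_k)$, $A'=\phi^\gamma_{s,u}$, $B=P_\gamma(E_k)$, $B'=\phi^\gamma_{u,t}$. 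This gives
\[
 \dist\bigl(P_\gamma(D_k)P_\gamma(E_k),\phi^\gamma_{s,u}\phi^\gamma_{u,t}\bigr)
 \le\dist(P_\gamma(D_k),\phi^\gamma_{s,u})
 +\Lip(\phi^\gamma_{s,u})\dist(P_\gamma(E_k),\phi^\gamma_{u,t}),
\]
and the right-hand side tends to zero because $\Lip(\phi^\gamma_{s,u})\le g(\ell_{s,u})$ is finite by \eqref{eq:path-lip}. The degenerate cases $u=s$ and $u=t$ are trivial.

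\emph{Inverses.} This is the main step. Fix $s<t$ and a subdivision $D=(t_0,\dots,t_n)$ from $s$ to $t$, with reverse $\bar D$ from $t$ to $s$. I would show
\[
 \dist\bigl(P_\gamma(D)P_\gamma(\bar D),\Id\bigr)\to0\qquad(\mesh D\to0),
\]
by central cancellation. The product is
\[
 \mu_{t_0t_1}\cdots\mu_{t_{n-1}t_n}\,\mu_{t_nt_{n-1}}\cdots\mu_{t_1t_0}.
\]
Replace the innermost pair $\mu_{t_{n-1}t_n}\mu_{t_nt_{n-1}}$ by $\Id$, then the next pair, and so on outward. At step $j$, the approximate inverse estimate \eqref{eq:approx-inverse} bounds the replacement error by $C\lambda_j^p$. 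The right suffix does not increase the comparison distance, and the left prefix $\mu_{t_0t_1}\cdots\mu_{t_{j-2}t_{j-1}}$ is a monotone chain of length at most $\ell_{s,t}$, so its Lipschitz constant is at most $g(\ell_{s,t})$ by \eqref{eq:chain}. Hence the total error is at most
\[
 C g(\ell_{s,t})\sum_j\lambda_j^p\le Cg(\ell_{s,t})\ell_{s,t}\,\omega_\gamma(D)^{p-1}\to0.
\]
Passing to the limit with \eqref{eq:composition} as above, using the Lipschitz bound on $\phi^\gamma_{s,t}$, gives $\phi^\gamma_{s,t}\phi^\gamma_{t,s}=\Id_{M_{\gamma(s)}}$. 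The same argument with $t,s$ interchanged gives $\phi^\gamma_{t,s}\phi^\gamma_{s,t}=\Id_{M_{\gamma(t)}}$. These limits need care: the hybrid products are not known to be continuous in advance. However, \eqref{eq:composition} holds for arbitrary maps with finite comparison distances, so the limits can be taken directly.

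\emph{General composition.} When $s,u,t$ are in an arbitrary order, reduce to the monotone case exactly as in Corollary~\ref{cor:subpath-class}. If $t$ lies between $s$ and $u$, the monotone case gives $\phi^\gamma_{s,u}=\phi^\gamma_{s,t}\phi^\gamma_{t,u}$; right-multiplying by $\phi^\gamma_{u,t}=(\phi^\gamma_{t,u})^{-1}$ gives the claim. If $s$ lies between $u$ and $t$, the monotone case gives $\phi^\gamma_{u,t}=\phi^\gamma_{u,s}\phi^\gamma_{s,t}$; left-multiplying by $\phi^\gamma_{s,u}$ gives the claim. I expect the obstacle to be confined to the central cancellation step: keeping every prefix in that step a monotone chain, so that the single global bound $g(\ell_{s,t})$ applies throughout.
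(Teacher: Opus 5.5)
Your proof is correct and follows the paper's argument: ordered composition by concatenating subdivisions and applying \eqref{eq:composition}, and two-sided inverses by central cancellation under the single prefix bound $g(\ell_{s,t})$, followed by an algebraic extension to arbitrary triples. The only difference is cosmetic: the paper handles the last step by anchoring $F_t=\phi^\gamma_{0,t}$ and writing $\phi^\gamma_{s,t}=F_s^{-1}F_t$, whereas you use the case analysis of Corollary~\ref{cor:subpath-class}, which is equally valid.
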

\begin{proof}
The degenerate subdivision gives the diagonal identity.
Suppose first that $u$ lies between $s$ and $t$.
Subdivisions of the two consecutive oriented subintervals concatenate,
and the product of the concatenated subdivision is the product of
their two products. Both subinterval lengths are at most
$\ell_{s,t}$, so their limit maps have Lipschitz constants at most
$G=g(\ell_{s,t})$.
For products $P_k,Q_k$ on parameter mesh-null subdivisions of these
subintervals, \eqref{eq:composition} gives
\[
 \dist(P_kQ_k,\phi^\gamma_{s,u}\phi^\gamma_{u,t})
 \le\dist(P_k,\phi^\gamma_{s,u})
       +G\dist(Q_k,\phi^\gamma_{u,t})\longrightarrow0.
\]
The concatenated products also converge to $\phi^\gamma_{s,t}$.
Separation proves composition for these ordered triples.

Fix a subdivision $D=(t_0,\ldots,t_n)$ from $s$ to $t$, and put
$x_j=\gamma(t_j)$, $P=P_\gamma(D)$, and $Q=P_\gamma(D^{-1})$,
where $D^{-1}$ is the reversed subdivision.
For $0\le j\le n$ define
\[
 T_j=(\mu_{x_0x_1}\cdots\mu_{x_{j-1}x_j})
     (\mu_{x_jx_{j-1}}\cdots\mu_{x_1x_0}),\qquad T_0=\Id.
\]
Thus $T_n=PQ$. Applying \eqref{eq:defect} with equal outer points
gives
\[
 \dist_{x_{j-1}x_{j-1}}
 (\mu_{x_{j-1}x_j}\mu_{x_jx_{j-1}},\Id)
 \le C d_P(x_{j-1},x_j)^p.
\]
The common left prefix in $T_j$ and $T_{j-1}$ has chain length
at most $\ell_{s,t}$, and the common right suffix does not increase
distance. Therefore
\begin{align*}
 \dist_{x_0x_0}(PQ,\Id)
 &\le\sum_{j=1}^n\dist(T_j,T_{j-1})
 \le CG\sum_{j=1}^n d_P(x_{j-1},x_j)^p\\
 &\le CG\sum_{j=1}^n\lambda_j(D)^p
 \le CG\ell_{s,t}\omega_\gamma(D)^{p-1}.
\end{align*}
The reversed tuple $x_n,\ldots,x_0$ gives
the same bound for $\dist_{x_nx_n}(QP,\Id)$.
Moreover, \eqref{eq:composition} and \eqref{eq:path-lip} give
\[
 \dist(PQ,\phi^\gamma_{s,t}\phi^\gamma_{t,s})
 \le\dist(P,\phi^\gamma_{s,t})
       +G\dist(Q,\phi^\gamma_{t,s})\longrightarrow0
\]
as the mesh tends to zero, and the reversed estimate applies to $QP$.
The two triangle inequalities now give
\[
 \phi^\gamma_{s,t}\phi^\gamma_{t,s}=\Id_{M_{\gamma(s)}},\qquad
 \phi^\gamma_{t,s}\phi^\gamma_{s,t}=\Id_{M_{\gamma(t)}}.
\]

To obtain unrestricted composition, set $F_t=\phi^\gamma_{0,t}$.
For $s\le t$, ordered composition gives $F_t=F_s\phi^\gamma_{s,t}$,
and hence $\phi^\gamma_{s,t}=F_s^{-1}F_t$.
For $s\ge t$, inversion and the ordered identity give
$\phi^\gamma_{s,t}=(F_t^{-1}F_s)^{-1}=F_s^{-1}F_t$.
Consequently, for arbitrary $s,u,t$,
\[
 \phi^\gamma_{s,u}\phi^\gamma_{u,t}
 =F_s^{-1}F_uF_u^{-1}F_t
 =F_s^{-1}F_t=\phi^\gamma_{s,t}.
\]
This includes coincident parameters.
\end{proof}

\begin{proposition}[Subpaths and path operations]\label{prop:path-algebra}
Define $\Phi_\mu(\gamma)=\phi^\gamma_{0,1}$. Then
\begin{equation}\label{eq:path-algebra}
 \begin{gathered}
 \Phi_\mu(c_a)=\Id_{M_a},\qquad
 \Phi_\mu(\gamma*\eta)=\Phi_\mu(\gamma)\Phi_\mu(\eta),\\
 \Phi_\mu(\bar\gamma)=\Phi_\mu(\gamma)^{-1},\qquad
 \Phi_\mu(\gamma_{s,t})=\phi^\gamma_{s,t}.
 \end{gathered}
\end{equation}
If $\rho:[0,1]\to[0,1]$ is nondecreasing and Lipschitz, with
$\rho(0)=0$ and $\rho(1)=1$, then
$\Phi_\mu(\gamma\circ\rho)=\Phi_\mu(\gamma)$.
\end{proposition}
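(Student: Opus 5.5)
The plan is to derive everything from the identification of partition products, via Lemma~\ref{lem:mesh-limit} and Proposition~\ref{prop:path}. The basic observation is that a subdivision of $[0,1]$ for a reparameterized path corresponds to a subdivision of the original parameter interval, with identical sample points $\gamma(t_j)$ and hence identical products. The real issue is to ensure that the mesh (or the length mesh $\omega$) still tends to zero under this correspondence.

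\textbf{Subpaths and trivial cases.} For $\Phi_\mu(c_a)$, every product consists of factors $\mu_{aa}=\Id$ by \eqref{eq:diag}, so the limit is $\Id_{M_a}$. For $\Phi_\mu(\gamma_{s,t})=\phi^\gamma_{s,t}$ with $s\ne t$, take a subdivision $(u_j)$ of $[0,1]$ and transport it by $u\mapsto s+(t-s)u$. This gives an oriented subdivision from $s$ to $t$ with the same sample points. Its parameter mesh is $|t-s|$ times the original mesh, so both limits along mesh-null sequences agree. For $s=t$, both sides are identities.

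\textbf{Reversal and concatenation.} Reversal is the case $s=1$, $t=0$: indeed $\bar\gamma=\gamma_{1,0}$, so $\Phi_\mu(\bar\gamma)=\phi^\gamma_{1,0}=(\phi^\gamma_{0,1})^{-1}$ by \eqref{eq:fullflow}. For concatenation, put $\sigma=\gamma*\eta$. Then $\sigma_{0,1/2}=\gamma$ and $\sigma_{1/2,1}=\eta$ as parameterized paths, so the subpath identity just proved gives $\Phi_\mu(\gamma)=\phi^\sigma_{0,1/2}$ and $\Phi_\mu(\eta)=\phi^\sigma_{1/2,1}$. Composition in \eqref{eq:fullflow} then yields $\phi^\sigma_{0,1}=\phi^\sigma_{0,1/2}\phi^\sigma_{1/2,1}$.

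\textbf{Reparameterization.} For $\rho$ nondecreasing and Lipschitz with $\rho(0)=0$, $\rho(1)=1$, I would avoid parameter mesh and use the length mesh $\omega$ instead, since Lemma~\ref{lem:mesh-limit} gives convergence as $\omega\to0$. Take uniform subdivisions $D_k$ of $[0,1]$ for $\gamma\circ\rho$. The points $\rho(t_j)$ are nondecreasing and may repeat. Collapsing repeated points only deletes factors $\mu_{xx}=\Id$, so the product $P_{\gamma\circ\rho}(D_k)$ equals $P_\gamma(E_k)$, where $E_k$ is the strictly increasing subdivision of $[0,1]$ obtained from the distinct values. The key check is that length is preserved under monotone reparameterization: $\len((\gamma\circ\rho)|_{[a,b]})=\len(\gamma|_{[\rho(a),\rho(b)]})$. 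This holds because monotone partitions correspond in both directions; any partition of $[\rho(a),\rho(b)]$ lifts through the surjective monotone $\rho$. Consequently $\omega_\gamma(E_k)=\omega_{\gamma\circ\rho}(D_k)\le\Lip(\gamma\circ\rho)2^{-k}\to0$. Applying Lemma~\ref{lem:mesh-limit} to both paths then gives the same limit. This $\omega$ bookkeeping, including the handling of flat stretches of $\rho$, is the only step I expect to need real care; the parameter mesh of $E_k$ need not tend to zero, which is why $\omega$ is used.
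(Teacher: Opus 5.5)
Your proposal is correct. For the constant path, affine subpaths, reversal via $\bar\gamma=\gamma_{1,0}$, and concatenation via $\sigma_{0,1/2}=\gamma$, $\sigma_{1/2,1}=\eta$, you follow the paper's argument exactly.

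The reparameterization step differs from the paper. You collapse the repeated values of $(\rho(t_j))$, as the paper does, but you then control the length mesh. You use the identity $\len((\gamma\circ\rho)|_{[a,b]})=\len(\gamma|_{[\rho(a),\rho(b)]})$, which is valid: monotone partitions push forward, and partitions lift through the continuous nondecreasing surjection by the intermediate value theorem. This gives $\omega_\gamma(E_k)=\omega_{\gamma\circ\rho}(D_k)\to0$, and you finish with the $\omega$-convergence clause of Lemma~\ref{lem:mesh-limit}.

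The paper instead bounds the parameter mesh directly. If $a<b$ are successive distinct values and $j$ is the first index with $\rho(t_j)=b$, then $\rho(t_{j-1})=a$ and $b-a\le\Lip(\rho)(t_j-t_{j-1})$. Hence $\mesh(\rho_\#D)\le\Lip(\rho)\mesh(D)$.

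Your closing remark is therefore mistaken. The parameter mesh of $E_k$ does tend to zero, by this Lipschitz bound, and even for merely continuous $\rho$ by uniform continuity. The detour through $\omega$ is thus unnecessary rather than forced. Your route is sound, and the length-invariance identity is a useful fact in its own right. The paper's one-line mesh estimate is shorter and avoids any length computation.
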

\begin{proof}
For a constant path all factors are diagonal identities.
For $s\ne t$, the affine map $u\mapsto s+(t-s)u$ sends every
subdivision $D$ of $[0,1]$ to an oriented subdivision $D_{s,t}$
from $s$ to $t$, with
\[
 P_{\gamma_{s,t}}(D)=P_\gamma(D_{s,t}),\qquad
 \mesh(D_{s,t})=|t-s|\mesh(D).
\]
Both sides therefore have the same limit.
For $s=t$ the subpath is constant, which proves the affine identity
in all cases.
For $\lambda=\gamma*\eta$, the identities
$\lambda_{0,1/2}=\gamma$ and $\lambda_{1/2,1}=\eta$ are exact.
Composition at the midpoint of $\lambda$ yields
\[
 \Phi_\mu(\lambda)
 =\phi^\lambda_{0,1/2}\phi^\lambda_{1/2,1}
 =\Phi_\mu(\gamma)\Phi_\mu(\eta).
\]
Since $\bar\gamma=\gamma_{1,0}$, the affine identity and
\eqref{eq:fullflow} give the reversal identity.

Finally, the continuous nondecreasing map $\rho$ is onto $[0,1]$.
For $D=(t_0,\ldots,t_n)$, delete repetitions in
$(\rho(t_0),\ldots,\rho(t_n))$ and denote the resulting subdivision
by $\rho_\#D$.
Every repeated entry contributes a diagonal identity factor, so
$P_{\gamma\circ\rho}(D)=P_\gamma(\rho_\#D)$.
If $a<b$ are successive distinct entries in $\rho_\#D$, let $j$
be the first index of the original list with $\rho(t_j)=b$.
Then $\rho(t_{j-1})=a$, and
\[
 b-a\le\Lip(\rho)(t_j-t_{j-1})\le\Lip(\rho)\mesh(D).
\]
It follows that $\mesh(\rho_\#D)\le\Lip(\rho)\mesh(D)$.
The two partition limits coincide, proving reparameterization invariance.
\end{proof}

When the approximate action is fixed, we also write $\Phi=\Phi_\mu$.

Transport now respects composition and reversal. To compare different
paths, we turn to metric trees, where the median of three points gives
an exact identity for geodesic transport.

\section{Exact geodesic transport on a tree}
\label{sec:tree}

We recall that a metric tree is a geodesic metric space with a unique arc between
every pair of points. Write $[x,y]$ for its geodesic segment and
$r_{xy}:[0,1]\to T$ for the constant-speed parameterization, with
$r_{xx}=c_x$. The median property states that
\[
 [x,y]\cap[y,z]\cap[z,x]=\{m\},\qquad
 [x,y]=[x,m]\cup[m,y],
\]
with the corresponding decompositions of the other two segments.
This standard tripod description is recalled in
\cite[Section~2.3]{EH}. No completeness assumption on $T$ is imposed.

\begin{theorem}\label{thm:tree}
Let $\nu$ be an approximate action on a metric tree $T$, with the
constants $p,C,K$ of \eqref{eq:constants}. The maps
$F_{xy}=\Phi_\nu(r_{xy})$ form an exact action of the pair groupoid
of $T$ and satisfy
\begin{equation}\label{eq:geo-bound}
 \dist_{xy}(F_{xy},\nu_{xy})
 \le Kg(d_T(x,y))d_T(x,y)^p,
 \qquad \Lip(F_{xy})\le g(d_T(x,y)).
\end{equation}
For every Lipschitz path $\alpha:[0,1]\to T$,
\begin{equation}\label{eq:tree-endpoints}
 \Phi_\nu(\alpha)=F_{\alpha(0)\alpha(1)}.
\end{equation}
\end{theorem}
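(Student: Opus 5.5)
The plan has three stages: the bounds \eqref{eq:geo-bound}, the exact action, and endpoint independence \eqref{eq:tree-endpoints}.

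\textbf{Bounds \eqref{eq:geo-bound}.} The geodesic $r_{xy}$ has length exactly $d_T(x,y)$. For any $s,t$ its restrictions satisfy $\ell_{s,t}=|t-s|\,d_T(x,y)$. Lemma~\ref{lem:mesh-limit} with $s=0,t=1$ then gives both inequalities directly, using \eqref{eq:local} and \eqref{eq:path-lip} with $\ell_{0,1}=d_T(x,y)$.

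\textbf{Exact action.} First, $F_{xx}=\Id$ by Proposition~\ref{prop:path-algebra}. Next, $F_{yx}=F_{xy}^{-1}$, since $r_{yx}=\overline{r_{xy}}$. For a point $m\in[x,y]$, the path $r_{xy}$ restricted to the parameter interval ending at $m$ is an affine reparameterization of $r_{xm}$. The affine identity and \eqref{eq:fullflow} therefore give $F_{xy}=F_{xm}F_{my}$. For a general triple, let $m$ be the median. Since $m\in[x,y]\cap[y,z]\cap[x,z]$, I get
\[
F_{xy}F_{yz}=F_{xm}F_{my}F_{ym}F_{mz}=F_{xm}F_{mz}=F_{xz}.
\]

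\textbf{Endpoint independence \eqref{eq:tree-endpoints}.} Fix a Lipschitz $\alpha$ and a subdivision $D=(t_j)$ with points $x_j=\alpha(t_j)$. The exact action gives
\[
F_{\alpha(0)\alpha(1)}=F_{x_0x_1}\cdots F_{x_{n-1}x_n}.
\]
I will compare this with $P_\alpha(D)=\nu_{x_0x_1}\cdots\nu_{x_{n-1}x_n}$ using the hybrid estimate \eqref{eq:hybrid}, taking the $A$-factors to be the $F$'s. Each $F$-prefix equals $F_{x_0x_j}$ by exactness, so its Lipschitz constant is at most $g(d_T(x_0,x_j))\le g(\len\alpha)$. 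The individual distances are at most $Kg(d_T(x_{j-1},x_j))d_T(x_{j-1},x_j)^p$. Hence
\[
\dist(F_{\alpha(0)\alpha(1)},P_\alpha(D))\le Kg(\len\alpha)^2\sum_j\lambda_j(D)^p\le Kg(\len\alpha)^2\len(\alpha)\,\omega_\alpha(D)^{p-1}.
\]
This tends to $0$ as the mesh tends to $0$. Since also $P_\alpha(D)\to\Phi_\nu(\alpha)$, separation gives the claim.

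\textbf{Main obstacle.} The delicate point is the prefix bound in the hybrid estimate. Choosing the $F$-prefixes avoids any need to know Lipschitz bounds of mixed products. It works only because exactness collapses each prefix to a single geodesic transport, so exactness must be established first. I will also check that all comparison distances are finite, which holds by the bounds just proved. The degenerate case of constant $\alpha$ is trivial.
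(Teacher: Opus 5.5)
Your proof is correct and follows the paper's route: the bounds come from Lemma~\ref{lem:mesh-limit}, exactness from geodesic splitting plus the median identity, and the conclusion from the hybrid comparison \eqref{eq:hybrid} of $P_\nu(\alpha,D)$ with $F_{x_0x_1}\cdots F_{x_{n-1}x_n}=F_{\alpha(0)\alpha(1)}$, ending in the same $Kg(\len\alpha)^2$ bound. The only variation is which factors sit in the prefixes: the paper uses the $\nu$ factors bounded by the chain control \eqref{eq:chain}, which also avoids mixed products without using exactness (contrary to your closing remark) and is the choice that later lets Proposition~\ref{prop:tree-uniqueness} treat a competing exact action without Lipschitz control; your choice collapses the $F$-prefixes to $F_{x_0x_{j-1}}$ (not $F_{x_0x_j}$, a harmless index slip) and uses \eqref{eq:geo-bound}, which is equally valid here.
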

\begin{proof}
Both the length and the Lipschitz constant of $r_{xy}$ are
$d_T(x,y)$. Proposition~\ref{prop:path} gives
\eqref{eq:geo-bound}. If $x\ne y$ and $v\in[x,y]$, set
$\theta=d_T(x,v)/d_T(x,y)$. For $0\le u\le1$, uniqueness and
constant speed of geodesics give
\[
 r_{xy}(\theta u)=r_{xv}(u),\qquad
 r_{xy}(\theta+(1-\theta)u)=r_{vy}(u).
\]
The formulas hold also for $\theta=0,1$. The affine-subpath and
local-flow identities imply
\begin{equation}\label{eq:geo-split}
 F_{xy}=F_{xv}F_{vy},\qquad
 F_{yx}=F_{xy}^{-1},\qquad F_{xx}=\Id_{M_x}.
\end{equation}
The splitting identity for $x=y$ consists of identity maps.
For the median $m$ of an arbitrary triple $x,y,z$, these identities
give
\begin{equation}\label{eq:median}
 \begin{aligned}
 F_{xy}F_{yz}
 &=F_{xm}F_{my}F_{ym}F_{mz}\\
 &=F_{xm}\Id_{M_m}F_{mz}=F_{xz}.
 \end{aligned}
\end{equation}
This proves the exact action law using only transports on geodesics.

Let $\alpha:a\to b$ be Lipschitz and put $\ell=\len(\alpha)$.
For a partition $D=(0=t_0<\cdots<t_n=1)$, set
\[
 x_j=\alpha(t_j),\qquad d_j=d_T(x_{j-1},x_j),\qquad
 \delta_D=\max_jd_j.
\]
The length and Lipschitz estimates give
\[
 \sum_{j=1}^nd_j\le\ell,\qquad
 \delta_D\le\Lip(\alpha)\mesh(D).
\]
Write $A_j=\nu_{x_{j-1}x_j}$ and $B_j=F_{x_{j-1}x_j}$.
By \eqref{eq:median}, $B_1\cdots B_n=F_{ab}$.
For $0\le j\le n$, define
\[
 H_j=A_1\cdots A_jB_{j+1}\cdots B_n.
\]
Thus $H_0=F_{ab}$ and $H_n=P_\nu(\alpha,D)$.
The comparison of $H_j$ and $H_{j-1}$ has common left prefix
$A_1\cdots A_{j-1}$ and common right suffix $B_{j+1}\cdots B_n$.
The prefix satisfies
\[
 \Lip(A_1\cdots A_{j-1})
 \le g\left(\sum_{i=1}^{j-1}d_i\right)\le g(\ell).
\]
For the empty prefix the bound follows from $1\le g(\ell)$.
The right suffix does not increase the supremum distance. Therefore
\begin{align}
 \dist_{ab}(P_\nu(\alpha,D),F_{ab})
 &\le \sum_{j=1}^n\dist_{ab}(H_j,H_{j-1})\notag\\
 &\le g(\ell)\sum_{j=1}^n
       \dist_{x_{j-1}x_j}(A_j,B_j)\notag\\
 &\le Kg(\ell)g(\delta_D)\sum_{j=1}^nd_j^p\notag\\
 &\le Kg(\ell)^2\ell\delta_D^{p-1}.
 \label{eq:tree-rate}
\end{align}
If $\ell=0$, all factors are identities. Otherwise
$\delta_D\le\ell$ and $p>1$, so the last quantity tends to zero
as $\mesh(D)\to0$. Proposition~\ref{prop:path} also gives
$P_\nu(\alpha,D)\to\Phi_\nu(\alpha)$.
For any $\eta>0$, sufficiently small mesh makes both distances
from $P_\nu(\alpha,D)$ to these two limits less than $\eta/2$.
The triangle inequality and separation in the extended supremum
metric prove \eqref{eq:tree-endpoints}.
\end{proof}

The exact tree action is already determined by a local superlinear
comparison with the approximate action. A separate Lipschitz bound
on products of the competing exact action is unnecessary.

\begin{proposition}\label{prop:tree-uniqueness}
Let $E_{xy}\in\mathcal C_{xy}$ be an exact action of the pair
groupoid of $T$. Suppose that some $A<\infty$, $r_0>0$, and
$\sigma>0$ satisfy
\[
 \dist_{xy}(E_{xy},\nu_{xy})
 \le A d_T(x,y)^{1+\sigma}
 \qquad\bigl(d_T(x,y)\le r_0\bigr).
\]
Then $E_{xy}=F_{xy}$ for all $x,y\in T$.
\end{proposition}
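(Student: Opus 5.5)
Fix $x\ne y$ in $T$, put $d=d_T(x,y)$, and subdivide the geodesic into $n$ equal pieces: $x_j=r_{xy}(j/n)$, $0\le j\le n$. Each piece has length $d/n$, and for $n$ large $d/n\le r_0$. By exactness of both actions and the geodesic splitting \eqref{eq:geo-split},
\[
 E_{xy}=E_{x_0x_1}\cdots E_{x_{n-1}x_n},\qquad
 F_{xy}=F_{x_0x_1}\cdots F_{x_{n-1}x_n}.
\]
We compare both products with the partition product $P_n=\nu_{x_0x_1}\cdots\nu_{x_{n-1}x_n}=P_\nu(r_{xy},D_n)$. The comparison with $F_{xy}$ is already available: either Lemma~\ref{lem:mesh-limit} gives $\dist(P_n,F_{xy})\le Kg(d)d(d/n)^{p-1}\to0$, or \eqref{eq:tree-rate} applies. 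The work lies in the comparison with $E_{xy}$.

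For $E$ we cannot use \eqref{eq:hybrid} with $E$-prefixes, since no Lipschitz bound on $E$ is assumed. Instead we choose the prefixes from the approximate action, whose chain Lipschitz bound \eqref{eq:chain} is available. Apply \eqref{eq:hybrid} with $A_j=\nu_{x_{j-1}x_j}$ and $B_j=E_{x_{j-1}x_j}$. Every prefix $A_1\cdots A_{j-1}$ is a chain of total length at most $d$, so its Lipschitz constant is at most $g(d)$. The hypothesis bounds each individual comparison by $A(d/n)^{1+\sigma}$, which is finite. Hence
\[
 \dist_{xy}(P_n,E_{xy})\le g(d)\,n\,A\,(d/n)^{1+\sigma}=Ag(d)d^{1+\sigma}n^{-\sigma}\longrightarrow0.
\]
We must check that the hybrid lemma's hypotheses hold here. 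The individual distances are finite, the prefixes are the $A$'s, and the telescoping identity $H_j=A_1\cdots A_jB_{j+1}\cdots B_n$ only needs the common left prefix to be Lipschitz. The $E$-suffix only restricts the supremum, so no regularity of $E$, not even continuity of its compositions, is required beyond $E_{xy}\in\mathcal C_{xy}$.

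The triangle inequality in the extended metric now gives $\dist(E_{xy},F_{xy})\le\dist(E_{xy},P_n)+\dist(P_n,F_{xy})\to0$, so $E_{xy}=F_{xy}$ by separation. For $x=y$, both maps are identities by exactness. The step to watch is exactly this choice of prefixes: using $\nu$-prefixes rather than $E$-prefixes is what removes any Lipschitz hypothesis on $E$. Everything else is routine.
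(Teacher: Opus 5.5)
Your proposal is correct and matches the paper's proof: subdivide $r_{xy}$ into $n$ equal pieces, use the exact action identity for $E$, apply \eqref{eq:hybrid} with $\nu$-prefixes bounded by $g(d)$ to get $\dist_{xy}(P_\nu(r_{xy},D_n),E_{xy})\le Ag(d)d^{1+\sigma}n^{-\sigma}\to0$, and conclude by separation because the same partition products converge to $F_{xy}$. Choosing $\nu$-prefixes so that no Lipschitz bound on $E$ is needed is also the point the paper stresses.
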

\begin{proof}
Fix $x\ne y$, set $h=d_T(x,y)$, and put
$D_n=(j/n)_{j=0}^n$ and $x_j=r_{xy}(j/n)$.
For every integer $n\ge h/r_0$, the exact
action identity and \eqref{eq:hybrid}, with $\nu$ prefixes, give
\[
 \begin{aligned}
 \dist_{xy}\bigl(P_\nu(r_{xy},D_n),E_{xy}\bigr)
 &\le g(h)\sum_{j=1}^n
       \dist_{x_{j-1}x_j}(\nu_{x_{j-1}x_j},E_{x_{j-1}x_j})\\
 &\le A g(h)n(h/n)^{1+\sigma}
   =A g(h)h^{1+\sigma}n^{-\sigma}\longrightarrow0.
 \end{aligned}
\]
The same partition products converge to $F_{xy}$.
Separation identifies the limits. The diagonal case is the exact
action identity $E_{xx}=\Id_{M_x}=F_{xx}$.
\end{proof}

A tree factorization will transfer endpoint dependence to homotopies
whose images have zero two-dimensional Hausdorff content. We first
check that projection preserves the sewn transport.

\section{Factorization and descent}
\label{sec:descent}

The relevant tree factorization has a $1$-Lipschitz projection. We first
verify that pullback preserves the defect and chain bounds and that
sewing commutes with this projection.

\begin{proposition}\label{prop:pullback}
Let $q:T\to P$ be $1$-Lipschitz, where $T$ is any metric space.
Set
\[
 N_u=M_{q(u)},\qquad \nu_{uv}=\mu_{q(u)q(v)}:N_v\to N_u.
\]
Then $\nu$ is an approximate action with the same
$C_r,a_r,b_r,p,g$ and with
$f^\uparrow(r)=\max_{0\le s\le r}f(s)$ in place of $f$.
For every Lipschitz path $\alpha$ in $T$,
\begin{equation}\label{eq:naturality}
 \Phi_\nu(\alpha)=\Phi_\mu(q\circ\alpha).
\end{equation}
\end{proposition}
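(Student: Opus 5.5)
First I will check the axioms of Definition~\ref{def:approx} for $\nu$ one at a time. Each one is the corresponding axiom for $\mu$ at the image points, combined with $d_P(q(u),q(v))\le d_T(u,v)$ and monotonicity in the distance variables.

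The diagonal identity holds because $\nu_{uu}=\mu_{q(u)q(u)}=\Id$. For the Lipschitz bound, $\Lip(\nu_{uv})\le 1+f(d_P(q(u),q(v)))\le 1+f^\uparrow(d_T(u,v))$. The function $f^\uparrow$ is continuous and satisfies $f^\uparrow(0)=0$, since it is a running maximum of a continuous function. For the defect estimate, each term $C_r d_P(q(v),q(w))^{a_r}d_P(q(w),q(u))^{b_r}$ is bounded by the same expression in $d_T$, because $a_r,b_r>0$. The comparison distance is literally $\dist_{q(u)q(v)}$ of the corresponding $\mu$-maps, since the fibers coincide. For the chain bound, a chain $u_0,\dots,u_n$ maps to the chain $q(u_0),\dots,q(u_n)$. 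Its $\mu$-product has Lipschitz constant at most $g(\sum d_P(q(u_{j-1}),q(u_j)))$, and this is at most $g(\sum d_T(u_{j-1},u_j))$ because $g$ is nondecreasing. This is the one place where $f$ has to be replaced and $g$ does not: $g$ is already monotone, while $f$ need not be.

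For naturality, fix a Lipschitz path $\alpha$ in $T$. Then $q\circ\alpha$ is Lipschitz with $\Lip(q\circ\alpha)\le\Lip(\alpha)$. For every partition $D$ of $[0,1]$, the definitions give the factorwise identity
\[
 P_\nu(\alpha,D)=\prod_j\nu_{\alpha(t_{j-1})\alpha(t_j)}
 =\prod_j\mu_{q\alpha(t_{j-1})\,q\alpha(t_j)}=P_\mu(q\circ\alpha,D).
\]
Both maps lie in the same space $\mathcal C_{q\alpha(0)\,q\alpha(1)}$. By Lemma~\ref{lem:mesh-limit}, applied to $\nu$ on $T$ and to $\mu$ on $P$, both sides converge as $\mesh(D)\to0$, to $\Phi_\nu(\alpha)$ and $\Phi_\mu(q\circ\alpha)$ respectively. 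The sequence of products is the same on both sides, and limits in the extended supremum metric are unique, so the two limits agree.

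I expect no serious obstacle. The only points that need care are the following.
\begin{enumerate}
\item The lemma gives parameter-mesh convergence, not length-mesh convergence. This matters because the length functions of $\alpha$ and of $q\circ\alpha$ differ, and only the common parameter partitions are shared by the two sides.
\item The hypothesis that $T$ is an arbitrary metric space causes no trouble, since Sections~\ref{sec:framework}--\ref{sec:path} used nothing about $P$ beyond its being a metric space.
\end{enumerate}
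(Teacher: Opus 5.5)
Your proposal is correct and follows the paper's proof essentially line by line. Each axiom is transferred through $d_P(q(u),q(v))\le d_T(u,v)$ together with monotonicity of $f^\uparrow$, of the positive powers, and of $g$, and naturality follows from the factorwise identity $P_\nu(\alpha,D)=P_\mu(q\circ\alpha,D)$ and uniqueness of limits in the common map space; the only difference is that the paper proves continuity of $f^\uparrow$ explicitly, using a uniform-continuity modulus of $f$ on $[0,R]$, while you cite it as the standard fact about running maxima.
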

\begin{proof}
The fibers $N_u$ are complete and
$\nu_{uu}=\mu_{q(u)q(u)}=\Id_{N_u}$.
The maximum defining $f^\uparrow$ exists on each compact interval.
It is nondecreasing and $f^\uparrow(0)=0$.
For $R>0$, let
\[
 \omega_R(h)=\sup\{|f(v)-f(w)|:
          v,w\in[0,R],\ |v-w|\le h\}.
\]
Uniform continuity gives $\omega_R(h)\to0$ as $h\downarrow0$.
If $0\le s\le t\le R$, a point attaining the maximum on $[0,t]$
either belongs to $[0,s]$ or belongs to $(s,t]$. In the two cases,
respectively,
\[
 f^\uparrow(t)=f^\uparrow(s)
 \quad\text{or}\quad
 0\le f^\uparrow(t)-f^\uparrow(s)\le\omega_R(t-s).
\]
Thus $f^\uparrow$ is continuous, and
\[
 \Lip(\nu_{uv})\le1+f(d_P(q(u),q(v)))
                   \le1+f^\uparrow(d_T(u,v)).
\]
For every $u,v,w\in T$,
\[
 \begin{aligned}
 \dist_{uv}(\nu_{uv},\nu_{uw}\nu_{wv})
 &\le\sum_{r=1}^N C_r
       d_P(q(v),q(w))^{a_r}d_P(q(w),q(u))^{b_r}\\
 &\le\sum_{r=1}^N C_r d_T(v,w)^{a_r}d_T(w,u)^{b_r}.
 \end{aligned}
\]
For a finite chain $u_0,\ldots,u_n$,
\[
 \begin{aligned}
 \Lip(\nu_{u_0u_1}\cdots\nu_{u_{n-1}u_n})
 &\le g\left(\sum_{j=1}^n d_P(q(u_{j-1}),q(u_j))\right)\\
 &\le g\left(\sum_{j=1}^n d_T(u_{j-1},u_j)\right).
 \end{aligned}
\]
These are the required approximate-action inequalities.
For every partition $D$, the identity of individual factors gives
\[
 P_\nu(\alpha,D)=P_\mu(q\circ\alpha,D).
\]
Their limits lie in the same extended supremum map space, so
Proposition~\ref{prop:path} gives \eqref{eq:naturality}.
In particular, $q\circ\alpha=c_a$ implies
$\Phi_\nu(\alpha)=\Id_{M_a}$.
\end{proof}

\begin{corollary}\label{cor:common-tree}
Suppose $q:T\to P$ is $1$-Lipschitz, $T$ is a metric tree, and
$\alpha_0,\alpha_1:[0,1]\to T$ are Lipschitz paths with the same
endpoints. Then
\[
 \Phi_\mu(q\circ\alpha_0)=\Phi_\mu(q\circ\alpha_1).
\]
\end{corollary}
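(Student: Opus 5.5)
The plan is to pull the approximate action back to the tree and compare both sides through the exact geodesic transport of Theorem~\ref{thm:tree}. First apply Proposition~\ref{prop:pullback} to the $1$-Lipschitz map $q:T\to P$. It produces an approximate action $\nu_{uv}=\mu_{q(u)q(v)}$ on the fibers $N_u=M_{q(u)}$ over $T$. This action has the same defect constants and the same chain control $g$, with $f^\uparrow$ in place of $f$, so it satisfies Definition~\ref{def:approx} on the metric tree $T$. The naturality identity \eqref{eq:naturality} then gives
\[
 \Phi_\mu(q\circ\alpha_i)=\Phi_\nu(\alpha_i)\qquad(i=0,1).
\]
The comparison of the two pushed-forward paths is thereby reduced to a comparison of transports of Lipschitz paths inside the tree.

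Next apply Theorem~\ref{thm:tree} to $\nu$ on $T$. Its endpoint formula \eqref{eq:tree-endpoints} states that every Lipschitz path $\alpha$ in $T$ satisfies
\[
 \Phi_\nu(\alpha)=F_{\alpha(0)\alpha(1)},
\]
where $F_{xy}=\Phi_\nu(r_{xy})$ is the geodesic transport. The paths $\alpha_0$ and $\alpha_1$ have the same initial point $a$ and the same final point $b$. Hence
\[
 \Phi_\nu(\alpha_0)=F_{ab}=\Phi_\nu(\alpha_1).
\]
Combined with the first step, this yields $\Phi_\mu(q\circ\alpha_0)=\Phi_\mu(q\circ\alpha_1)$.

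There is no genuine obstacle, since all the analytic work was already done in Theorem~\ref{thm:tree} and Proposition~\ref{prop:pullback}. The only point to check is that both transports live in the same map space $\mathcal C_{q(a)q(b)}$. This holds because $N_u=M_{q(u)}$ and because the maps $\Phi_\nu$ and $\Phi_\mu$ are limits of the identical partition products $P_\nu(\alpha,D)=P_\mu(q\circ\alpha,D)$. A second point is that Theorem~\ref{thm:tree} needs no completeness of $T$. Completeness of the fibers $N_u$ is inherited directly from the fibers $M_{q(u)}$.
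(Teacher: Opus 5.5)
Your proposal is correct and matches the paper's proof: pull back along $q$ via Proposition~\ref{prop:pullback}, use the naturality identity \eqref{eq:naturality}, and apply the endpoint formula \eqref{eq:tree-endpoints} of Theorem~\ref{thm:tree} to both lifted paths, which share the endpoints $a,b$. Your remarks that the two transports lie in the same map space and that no completeness of $T$ is needed are also consistent with the paper.
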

\begin{proof}
For the pulled-back action of Proposition~\ref{prop:pullback},
Theorem~\ref{thm:tree} and \eqref{eq:naturality} give
\[
 \Phi_\mu(q\alpha_0)=\Phi_\nu(\alpha_0)
 =F_{\alpha_0(0)\alpha_0(1)}
 =F_{\alpha_1(0)\alpha_1(1)}
 =\Phi_\nu(\alpha_1)=\Phi_\mu(q\alpha_1).
\]
\end{proof}

\begin{proposition}\label{prop:tree-square}
Let $H:[0,1]^2\to P$ factor as $H=q\psi$, where
$\psi:[0,1]^2\to T$ is Lipschitz, $T$ is a metric tree, and
$q:T\to P$ is $1$-Lipschitz. Define the boundary paths
\[
 \gamma_i(s)=H(s,i),\qquad \beta_i(t)=H(i,t)
 \quad(i=0,1).
\]
Then
\begin{equation}\label{eq:tree-boundary}
 \Phi_\mu(\gamma_0)\Phi_\mu(\beta_1)
 =\Phi_\mu(\beta_0)\Phi_\mu(\gamma_1).
\end{equation}
If $H$ is relative, then
$\Phi_\mu(\gamma_0)=\Phi_\mu(\gamma_1)$.
\end{proposition}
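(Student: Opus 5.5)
The plan is to lift the four boundary paths of the square to the tree $T$ via $\psi$. Then Corollary~\ref{cor:common-tree}, combined with Proposition~\ref{prop:path-algebra}, compares the two boundary routes.

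Set $\tilde\gamma_i(s)=\psi(s,i)$ and $\tilde\beta_i(t)=\psi(i,t)$. These are Lipschitz paths in $T$ with $q\circ\tilde\gamma_i=\gamma_i$ and $q\circ\tilde\beta_i=\beta_i$. The two concatenations $\tilde\gamma_0*\tilde\beta_1$ and $\tilde\beta_0*\tilde\gamma_1$ are Lipschitz paths in $T$, both running from $\psi(0,0)$ to $\psi(1,1)$. Composing with $q$ commutes with concatenation: $q\circ(\tilde\gamma_0*\tilde\beta_1)=\gamma_0*\beta_1$, and likewise for the other route. Corollary~\ref{cor:common-tree} then gives
\[
 \Phi_\mu(\gamma_0*\beta_1)=\Phi_\mu(\beta_0*\gamma_1).
\]
The concatenation identity in \eqref{eq:path-algebra} splits each side, and this yields \eqref{eq:tree-boundary}.

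Now suppose $H$ is relative. Then $\beta_0=c_a$ and $\beta_1=c_b$ are constant paths. Note that the lifts $\tilde\beta_i$ need not be constant in $T$; this is why we compare the projected paths rather than the lifts. By \eqref{eq:path-algebra}, $\Phi_\mu(c_a)=\Id_{M_a}$ and $\Phi_\mu(c_b)=\Id_{M_b}$. Equivalently, the final remark of Proposition~\ref{prop:pullback} gives $\Phi_\nu(\tilde\beta_i)=\Id$. Substituting these identities into \eqref{eq:tree-boundary} gives $\Phi_\mu(\gamma_0)=\Phi_\mu(\gamma_1)$.

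The only point that needs care is checking that the concatenated lifts are well defined and Lipschitz, and that they share both endpoints. This holds because the lifts agree at the corners, $\tilde\gamma_0(1)=\psi(1,0)=\tilde\beta_1(0)$ and similarly at the other corners. The Lipschitz bound for a concatenation, recorded in Section~\ref{sec:paths-quotient}, then applies. Corollary~\ref{cor:common-tree} only requires the lifted paths to have common endpoints in $T$, so no other compatibility condition is needed.
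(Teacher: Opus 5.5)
Your proposal is correct and matches the paper's proof essentially step for step. You lift the two boundary routes through $\psi$ to Lipschitz paths in $T$ that share the endpoints $\psi(0,0)$ and $\psi(1,1)$, then apply Corollary~\ref{cor:common-tree} and the concatenation identity. In the relative case you use that the projected vertical paths are constant, even though their lifts need not be, so their transports are identities.
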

\begin{proof}
Put $\alpha_i(s)=\psi(s,i)$ and $\eta_i(t)=\psi(i,t)$.
The Lipschitz paths $\alpha_0*\eta_1$ and
$\eta_0*\alpha_1$ both run from $\psi(0,0)$ to $\psi(1,1)$.
Corollary~\ref{cor:common-tree} and concatenation give
\[
 \begin{aligned}
 \Phi_\mu(\gamma_0)\Phi_\mu(\beta_1)
 &=\Phi_\mu\bigl(q\circ(\alpha_0*\eta_1)\bigr)\\
 &=\Phi_\mu\bigl(q\circ(\eta_0*\alpha_1)\bigr)
  =\Phi_\mu(\beta_0)\Phi_\mu(\gamma_1).
 \end{aligned}
\]
If $H$ is relative from $a$ to $b$, then
$q\eta_0=\beta_0=c_a$ and $q\eta_1=\beta_1=c_b$.
Consequently the vertical transports are $\Id_{M_a}$ and
$\Id_{M_b}$, and \eqref{eq:tree-boundary} gives the assertion.
The lifted vertical paths $\eta_0,\eta_1$ need not be constant.
\end{proof}

For a subset $E$ of a metric space, use the unnormalized
two-dimensional Hausdorff content
\[
 \HH^2_\infty(E)=\inf\left\{
       \sum_i(\diam U_i)^2:E\subseteq\bigcup_iU_i\right\},
\]
where covers are countable. A fixed positive dimensional
normalization does not change its zero sets.
For a Lipschitz map $H:Q=[0,1]^2\to P$, the $(2,0)$-mapping
content is
\[
 \HH^{2,0}_\infty(H,Q)
 =\inf_{\mathcal Q}\sum_{R\in\mathcal Q}
             \HH^2_\infty(H(R)),
\]
where $\mathcal Q$ ranges over covers of $Q$ by closed dyadic
subsquares admissible under the conventions of \cite{EH}.
The cover $\mathcal Q=\{Q\}$
is admissible. For every admissible cover, countable subadditivity
also gives
\[
 \HH^2_\infty(H(Q))
 \le\sum_{R\in\mathcal Q}\HH^2_\infty(H(R)).
\]
Taking the infimum and then using the one-square cover proves
\begin{equation}\label{eq:content}
 \HH^{2,0}_\infty(H,Q)=\HH^2_\infty(H(Q)).
\end{equation}

The geometric input is the implication (e)$\Rightarrow$(a) of
\cite[Theorem~1.1]{EH}, together with \cite[Remark~1.2]{EH}.
For a Lipschitz map of the Euclidean square into an arbitrary
metric space, it gives
\begin{equation}\label{eq:factorization}
 \begin{gathered}
 \HH^{2,0}_\infty(H,Q)=0
 \quad\Longrightarrow\quad
 H=q\circ\psi,\\
 \psi:Q\to T,\quad q:T\to P,\quad T\text{ a metric tree},\\
 \Lip(\psi)\le\Lip(H),\qquad \Lip(q)\le1.
 \end{gathered}
\end{equation}
The square in this application carries its Euclidean metric.
Changing between the Euclidean and $\ell^1$ metrics preserves its
Lipschitz maps and does not alter the image-content condition.
No completeness or geodesic hypothesis on $P$ is required.

\begin{theorem}\label{thm:descent}
Let $H:Q\to P$ be a relative Lipschitz homotopy from
$\gamma_0:a\to b$ to $\gamma_1:a\to b$. If
$\HH^2_\infty(H(Q))=0$, then
\[
 \Phi_\mu(\gamma_0)=\Phi_\mu(\gamma_1).
\]
\end{theorem}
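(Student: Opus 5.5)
The plan is to reduce the statement directly to Proposition~\ref{prop:tree-square} via the factorization \eqref{eq:factorization}. Since $H$ is Lipschitz on the Euclidean square, the identity \eqref{eq:content} converts the hypothesis $\HH^2_\infty(H(Q))=0$ into vanishing mapping content, $\HH^{2,0}_\infty(H,Q)=0$.

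Next I will invoke the implication (e)$\Rightarrow$(a) of \cite[Theorem~1.1]{EH} together with \cite[Remark~1.2]{EH}, as recorded in \eqref{eq:factorization}. This yields a metric tree $T$, a Lipschitz map $\psi:Q\to T$ with $\Lip(\psi)\le\Lip(H)$, and a $1$-Lipschitz map $q:T\to P$ with $H=q\circ\psi$. No completeness or geodesic hypothesis on $P$ is needed for this step, and none is needed on $T$ either, because Theorem~\ref{thm:tree} makes no completeness assumption.

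Then $H$ satisfies every hypothesis of Proposition~\ref{prop:tree-square}. Its boundary paths are $\gamma_i(s)=H(s,i)$ and $\beta_i(t)=H(i,t)$. By \eqref{eq:relative-boundary}, $\gamma_0$ and $\gamma_1$ are the two given paths, and $\beta_0=c_a$, $\beta_1=c_b$. The relative clause of Proposition~\ref{prop:tree-square} then gives $\Phi_\mu(\gamma_0)=\Phi_\mu(\gamma_1)$ directly.

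Recall that this clause rests on two facts. First, Proposition~\ref{prop:pullback} shows that the lifted vertical paths $\eta_i=\psi(i,\cdot)$ have transport $\Phi_\mu(c_a)=\Id$, even though the $\eta_i$ themselves need not be constant. Second, Corollary~\ref{cor:common-tree} identifies the two boundary routes.

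The only point that needs care is matching conventions with \cite{EH}. Their theorem is stated for the Euclidean square and uses their admissible dyadic covers; this is exactly why \eqref{eq:content} was established. If an $\ell^1$ metric were in use, it would first have to be converted back to the Euclidean metric, which preserves both Lipschitz maps and image content. Beyond this, the argument is a direct assembly of earlier results, and I expect no genuine analytic obstacle.
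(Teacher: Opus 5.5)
Your proposal is correct and matches the paper's proof: \eqref{eq:content} turns $\HH^2_\infty(H(Q))=0$ into vanishing mapping content, \eqref{eq:factorization} supplies the tree factorization $H=q\circ\psi$, and the relative clause of Proposition~\ref{prop:tree-square} gives the equality. One harmless slip: the right vertical lift $\eta_1$ has transport $\Phi_\mu(c_b)=\Id_{M_b}$, not $\Phi_\mu(c_a)$.
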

\begin{proof}
Equation~\eqref{eq:content} gives
$\HH^{2,0}_\infty(H,Q)=0$.
The factorization \eqref{eq:factorization} satisfies all the
hypotheses of Proposition~\ref{prop:tree-square}.
Its relative-boundary conclusion proves the equality.
\end{proof}

\begin{corollary}\label{cor:thin}
Every Lipschitz-thin loop $\lambda:a\to a$ satisfies
$\Phi_\mu(\lambda)=\Id_{M_a}$.
Thin-equivalent Lipschitz paths have equal sewn transports.
\end{corollary}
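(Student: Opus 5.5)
The plan is to reduce the first assertion to Theorem~\ref{thm:descent} and then obtain the second from the groupoid identities of Proposition~\ref{prop:path-algebra}.

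For a Lipschitz-thin loop $\lambda:a\to a$, Definition~\ref{def:thin} supplies a relative Lipschitz homotopy $H$ from $\lambda$ to $c_a$ with $H(Q)\subseteq\lambda([0,1])$. So the only thing to verify is that $\HH^2_\infty(\lambda([0,1]))=0$; monotonicity of content then gives $\HH^2_\infty(H(Q))=0$. This is the one substantive step, although it is elementary. Let $L=\Lip(\lambda)$. For each $n$, cover $[0,1]$ by $n$ intervals of length $1/n$. Their images have diameter at most $L/n$, so
\[
 \HH^2_\infty(\lambda([0,1]))\le n(L/n)^2=L^2/n\longrightarrow0.
\]
Theorem~\ref{thm:descent} then yields
\[
 \Phi_\mu(\lambda)=\Phi_\mu(c_a)=\Id_{M_a},
\]
the last equality coming from \eqref{eq:path-algebra}.

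For the second assertion, it suffices to treat a single generator, since thin equivalence is the chain closure of generators. Let $\beta_{j-1},\beta_j:a\to b$ with $\beta_{j-1}*\bar\beta_j$ thin. By the first part and \eqref{eq:path-algebra},
\[
 \Id_{M_a}=\Phi_\mu(\beta_{j-1}*\bar\beta_j)
 =\Phi_\mu(\beta_{j-1})\Phi_\mu(\beta_j)^{-1}.
\]
Multiplying on the right by $\Phi_\mu(\beta_j)$, which is a bijection by Proposition~\ref{prop:path}, gives $\Phi_\mu(\beta_{j-1})=\Phi_\mu(\beta_j)$. Induction along the chain $\beta_0,\ldots,\beta_m$ then gives $\Phi_\mu(\gamma_0)=\Phi_\mu(\gamma_1)$.

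I expect no real obstacle. The only point needing care is that the content bound uses the unnormalized content with arbitrary countable covers, which the finite cover above satisfies. Every other step is bookkeeping with results already established in the paper.
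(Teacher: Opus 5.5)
Your proposal is correct and matches the paper's proof essentially step for step. You bound $\HH^2_\infty(\lambda([0,1]))\le\Lip(\lambda)^2/n$ using the images of $n$ equal subintervals, apply Theorem~\ref{thm:descent} to the thin contraction to get $\Phi_\mu(\lambda)=\Phi_\mu(c_a)=\Id_{M_a}$, and then handle each generator of a thin chain via \eqref{eq:path-algebra} and conclude by transitivity.
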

\begin{proof}
For any Lipschitz path $\lambda$ and integer $n\ge1$, the sets
$U_j=\lambda([(j-1)/n,j/n])$ cover its image and satisfy
$\diam U_j\le\Lip(\lambda)/n$. Hence
\[
 0\le\HH^2_\infty(\lambda([0,1]))
 \le\sum_{j=1}^n(\diam U_j)^2
 \le\frac{\Lip(\lambda)^2}{n}.
\]
Letting $n\to\infty$ proves that this content is zero.
If $H$ is a thin contraction of $\lambda$, then
\[
 H(Q)\subseteq\lambda([0,1]),\qquad
 \HH^2_\infty(H(Q))=0.
\]
Theorem~\ref{thm:descent} therefore gives
$\Phi_\mu(\lambda)=\Phi_\mu(c_a)=\Id_{M_a}$.
For a finite thin chain $\beta_0,\ldots,\beta_m$, each loop
$\beta_{j-1}*\bar\beta_j$ is thin. Proposition~\ref{prop:path-algebra}
and the loop conclusion imply
\[
 \Id=\Phi_\mu(\beta_{j-1}*\bar\beta_j)
 =\Phi_\mu(\beta_{j-1})\Phi_\mu(\beta_j)^{-1},
 \qquad \Phi_\mu(\beta_{j-1})=\Phi_\mu(\beta_j).
\]
Transitivity gives
$\Phi_\mu(\beta_0)=\Phi_\mu(\beta_m)$.
\end{proof}

Transport is constant on thin classes. We can now assemble the groupoid
action, prove its uniqueness, and express its bounds through the size
of a class.

\section{Sewing on the Lipschitz-thin groupoid}
\label{sec:main}

We first compare the infimal length and the infimal Lipschitz constant
within a thin class. Their equality puts the global estimate in the
normalization used in \cite[Conjecture~4.13]{CM}.

\begin{lemma}[Length representatives]\label{lem:class-length}
For every nonconstant Lipschitz path $\gamma$ of length $\ell$, there
is a Lipschitz path $\widehat\gamma$ in the same thin class such that
\[
 \Lip(\widehat\gamma)=\len(\widehat\gamma)=\ell.
\]
Consequently, for every thin class $h$,
\begin{equation}\label{eq:class-length}
 \mathcal L(h):=\inf_{\gamma\in h}\len(\gamma)
       =\inf_{\gamma\in h}\Lip(\gamma),\qquad
 \mathcal L(h^{-1})=\mathcal L(h).
\end{equation}
For composable classes, $\mathcal L(hk)\le\mathcal L(h)+\mathcal L(k)$.
\end{lemma}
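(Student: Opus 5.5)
Our plan is to produce $\widehat\gamma$ as the arc-length (constant-speed) reparameterization of $\gamma$. By Lemma~\ref{lem:length-control}, $V(t)=v(0,t)$ is nondecreasing and $L$-Lipschitz, with $V(0)=0$ and $V(1)=\ell>0$. Whenever $V(s)=V(t)$, we have $d_P(\gamma(s),\gamma(t))\le|V(t)-V(s)|=0$. So $\gamma$ is constant on the fibers of $V$, and there is a well-defined map
\[
 \widehat\gamma(u)=\gamma(t)\quad\text{for any $t$ with }V(t)=\ell u,
\]
with $\gamma=\widehat\gamma\circ(V/\ell)$. The same estimate gives
\[
 d_P(\widehat\gamma(u),\widehat\gamma(u'))\le\ell|u-u'|,
\]
so $\Lip(\widehat\gamma)\le\ell$. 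For the length, a partition $(u_j)$ lifts to a monotone partition $(t_j)$ with $V(t_j)=\ell u_j$. Hence $S_{\widehat\gamma}\le\ell$ and $S_\gamma(D)=S_{\widehat\gamma}(V_\#D/\ell)$ for every $D$, which yields $\len(\widehat\gamma)=\len(\gamma)=\ell$. Combining this with $\len\le\Lip$ gives $\Lip(\widehat\gamma)=\len(\widehat\gamma)=\ell$.

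To place $\widehat\gamma$ in the same thin class, we apply Lemma~\ref{lem:thin-calculus} with $\rho=V/\ell$. This $\rho$ is Lipschitz and nondecreasing, with $\rho(0)=0$ and $\rho(1)=1$. The lemma then says that $\widehat\gamma$ and $\widehat\gamma\circ\rho=\gamma$ are related by one generator of thin equivalence. The only point to check is that the lemma is applied to the Lipschitz path $\widehat\gamma$ rather than to $\gamma$, and that was established in the first step.

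For \eqref{eq:class-length}, the inequality $\len\le\Lip$ gives $\inf\len\le\inf\Lip$. For the reverse inequality, each nonconstant $\gamma\in h$ supplies $\widehat\gamma\in h$ with $\Lip(\widehat\gamma)=\len(\gamma)$. If $h$ contains a constant path, both infima are $0$. A path of length zero is constant, so no other degenerate case arises. Inversion preserves both length and Lipschitz constant via $\gamma\mapsto\bar\gamma$, and Lemma~\ref{lem:quotient} gives $[\bar\gamma]=h^{-1}$. This proves $\mathcal L(h^{-1})=\mathcal L(h)$.

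For subadditivity, take $\gamma\in h$ and $\eta\in k$. The concatenation $\gamma*\eta$ lies in $hk$ by \eqref{eq:groupoid-laws}. Its length is $\len(\gamma)+\len(\eta)$, either by additivity \eqref{eq:length-additivity} at $u=1/2$ or because length is invariant under the affine reparameterizations. Taking infima gives $\mathcal L(hk)\le\mathcal L(h)+\mathcal L(k)$. We expect the main, though mild, obstacle to be the first step: verifying that $\widehat\gamma$ is well defined and has length exactly $\ell$, which requires the partition correspondence through $V$.
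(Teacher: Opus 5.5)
Your proposal is correct and follows the paper's proof essentially step for step: the arc-length reparameterization $\widehat\gamma$ defined through $V$, Lemma~\ref{lem:thin-calculus} applied to $\widehat\gamma$ with $\rho=V/\ell$, the sandwich $\ell\le\len(\widehat\gamma)\le\Lip(\widehat\gamma)\le\ell$, the reversal bijection for $h^{-1}$, and additivity of length under concatenation. The only difference is that your two-sided partition correspondence for the length does more than needed, since the upper bound $S_{\widehat\gamma}\le\Lip(\widehat\gamma)\le\ell$ is immediate.
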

\begin{proof}
For $V(t)=\len(\gamma|_{[0,t]})$, Lemma~\ref{lem:length-control}
gives a nondecreasing continuous surjection $V:[0,1]\to[0,\ell]$.
If $V(s)=V(t)$, then
$d_P(\gamma(s),\gamma(t))\le|V(s)-V(t)|=0$.
Thus the prescription
\[
 \beta(v)=\gamma(t)\quad\text{whenever }V(t)=v
\]
defines a map $\beta:[0,\ell]\to P$. For $v,w$ and corresponding
preimages $s,t$, Lemma~\ref{lem:length-control} gives
\[
 d_P(\beta(v),\beta(w))
 =d_P(\gamma(s),\gamma(t))\le|V(s)-V(t)|=|v-w|.
\]
Set $\widehat\gamma(u)=\beta(\ell u)$ and $\rho(t)=V(t)/\ell$.
Then
\[
 \Lip(\widehat\gamma)\le\ell,\qquad
 \Lip(\rho)\le\Lip(\gamma)/\ell,\qquad
 \rho(0)=0,\quad\rho(1)=1,\quad
 \gamma=\widehat\gamma\circ\rho.
\]
Lemma~\ref{lem:thin-calculus} gives $[\gamma]=[\widehat\gamma]$.
For every partition $D$, the list $\rho(D)$ is nondecreasing, and
deleting repetitions gives
$S_\gamma(D)\le\len(\widehat\gamma)$. Therefore
\[
 \ell\le\len(\widehat\gamma)
       \le\Lip(\widehat\gamma)\le\ell.
\]
For constant paths the same equalities hold with length zero.
Applying this construction to each representative, and using
$\len(\gamma)\le\Lip(\gamma)$ in the other direction, proves
the equality of infima in \eqref{eq:class-length}. Reversal gives
a length-preserving bijection between the representatives of $h$
and those of $h^{-1}$.

Length additivity and the affine definition of concatenation give
$\len(\gamma*\eta)=\len(\gamma)+\len(\eta)$. For every
$\delta>0$, representatives with lengths below
$\mathcal L(h)+\delta$ and $\mathcal L(k)+\delta$ therefore give
$\mathcal L(hk)\le\mathcal L(h)+\mathcal L(k)+2\delta$.
Let $\delta\downarrow0$.
\end{proof}

\begin{theorem}[Thin-groupoid sewing]\label{thm:main}
Every approximate action in Definition~\ref{def:approx} determines
an action $\varphi$ of $\Gthin$ on $(M_x)$ by bi-Lipschitz maps.
For every representative $\gamma:a\to b$,
\begin{equation}\label{eq:main-limit}
 \varphi_{[\gamma]}=\Phi_\mu(\gamma)
 =\lim_{\mesh(D)\to0}P_\gamma(D)\in\mathcal C_{ab}.
\end{equation}
The limit is independent of the representative and also holds when
the length mesh $\omega_\gamma(D)$ tends to zero. For all $s,t$,
\begin{equation}\label{eq:main-local}
 \begin{split}
 \dist_{\gamma(s)\gamma(t)}
  (\varphi_{[\gamma_{s,t}]},\mu_{\gamma(s)\gamma(t)})
 &\le K g(\ell_{s,t})\ell_{s,t}^{p}\\
 &\le K g(\ell_{s,t})\Lip(\gamma)^p|t-s|^p.
 \end{split}
\end{equation}
For every class $h$,
\begin{equation}\label{eq:main-lip}
 \max\{\Lip(\varphi_h),\Lip(\varphi_h^{-1})\}
 \le g(\mathcal L(h))
 =g\left(\inf_{\eta\in h}\Lip(\eta)\right).
\end{equation}
The action is unique among actions $\Psi$ such that, for every
Lipschitz path $\gamma$, there are $A_\gamma<\infty$ and
$q_\gamma>1$ with
\begin{equation}\label{eq:unique-hyp}
 \dist_{\gamma(s)\gamma(t)}
 (\Psi_{[\gamma_{s,t}]},\mu_{\gamma(s)\gamma(t)})
 \le A_\gamma|t-s|^{q_\gamma}\qquad(s,t\in[0,1]).
\end{equation}
In particular, it is the unique action satisfying
\eqref{eq:main-local}.
\end{theorem}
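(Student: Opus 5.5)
Most of the work is already done by Corollary~\ref{cor:thin}, Propositions~\ref{prop:path} and~\ref{prop:path-algebra}, Lemma~\ref{lem:mesh-limit} and Lemma~\ref{lem:class-length}. The plan is to assemble these results and then add a uniqueness argument modeled on Proposition~\ref{prop:tree-uniqueness}.

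\textbf{Definition and action laws.} For a class $h$ from $a$ to $b$, set $\varphi_h=\Phi_\mu(\gamma)$ for any representative $\gamma$. Corollary~\ref{cor:thin} shows this does not depend on the choice of $\gamma$. Lemma~\ref{lem:mesh-limit} gives the limit formula \eqref{eq:main-limit}, both for parameter mesh and for length mesh. Proposition~\ref{prop:path-algebra} gives the action laws. Units act as identities because $\Phi_\mu(c_a)=\Id$. Composition holds because $[\gamma][\eta]=[\gamma*\eta]$ by Lemma~\ref{lem:quotient}, and $\Phi_\mu(\gamma*\eta)=\Phi_\mu(\gamma)\Phi_\mu(\eta)$. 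Inverses hold because $\Phi_\mu(\bar\gamma)=\Phi_\mu(\gamma)^{-1}$.

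\textbf{Local and Lipschitz estimates.} The local estimate \eqref{eq:main-local} follows from $\varphi_{[\gamma_{s,t}]}=\Phi_\mu(\gamma_{s,t})=\phi^\gamma_{s,t}$ together with \eqref{eq:local}. For the Lipschitz bound, take any representative $\eta\in h$. Then \eqref{eq:path-lip} gives $\Lip(\varphi_h)\le g(\len\eta)$. Since $\varphi_h^{-1}=\Phi_\mu(\bar\eta)$ and $\len\bar\eta=\len\eta$, the same bound holds for $\varphi_h^{-1}$. To pass to the infimum I plan to use the continuity of $g$ (monotonicity alone would only give a right limit, so continuity is what is needed here). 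Taking the infimum over $\eta\in h$ and applying \eqref{eq:class-length} gives $g(\mathcal L(h))=g(\inf_{\eta\in h}\Lip\eta)$, which proves \eqref{eq:main-lip}. As a consequence, the maps $\varphi_h$ are bi-Lipschitz bijections.

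\textbf{Uniqueness.} This is the one step that needs care. Let $\Psi$ be an action satisfying \eqref{eq:unique-hyp}, and fix $\gamma:a\to b$. Take the uniform partition $t_j=j/n$. By Corollary~\ref{cor:subpath-class} and the action law, $\Psi_{[\gamma]}=\prod_j\Psi_{[\gamma_{t_{j-1},t_j}]}$. Compare this with $P_\gamma(D_n)$ using the hybrid estimate \eqref{eq:hybrid}, taking the prefixes to be $\mu$-chain products so that no Lipschitz bound on $\Psi$ is required. Each prefix has Lipschitz constant at most $g(\len\gamma)$ by \eqref{eq:chain}, since a sampled subchain has length at most $\len\gamma$. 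The individual terms are bounded by $A_\gamma n^{-q_\gamma}$, so the total distance is at most $g(\len\gamma)A_\gamma n^{1-q_\gamma}$, which tends to $0$. Hence $P_\gamma(D_n)\to\Psi_{[\gamma]}$. These products also converge to $\Phi_\mu(\gamma)$, so separation in the extended supremum metric gives equality.

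The last claim of the theorem follows because \eqref{eq:main-local} implies \eqref{eq:unique-hyp} with $q_\gamma=p$ and $A_\gamma=Kg(\len\gamma)\Lip(\gamma)^p$, using that $g$ is nondecreasing and $\ell_{s,t}\le\len\gamma$. The main technical point in the uniqueness step is that the finite distances in \eqref{eq:hybrid} must be applied with the correct prefix orientation, so that $\Psi$ never contributes a Lipschitz factor.
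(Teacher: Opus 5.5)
Your proposal is correct and follows essentially the same route as the paper: representative independence via Corollary~\ref{cor:thin}, the action laws from Proposition~\ref{prop:path-algebra}, the local bound from \eqref{eq:local}, the Lipschitz bound via \eqref{eq:path-lip} plus continuity of $g$ to pass to the infimum, and uniqueness via \eqref{eq:hybrid} with $\mu$-chain prefixes so that $\Psi$ contributes no Lipschitz factor. The differences are only cosmetic: you use uniform partitions in the uniqueness step where the paper allows an arbitrary partition, and you obtain the inverse bound by reversing a representative directly rather than quoting $\mathcal L(h^{-1})=\mathcal L(h)$ from Lemma~\ref{lem:class-length}.
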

\begin{proof}
For a link $\beta_{j-1}*\bar\beta_j$ in a thin chain,
Corollary~\ref{cor:thin} and \eqref{eq:path-algebra} give
\[
 \Id=\Phi_\mu(\beta_{j-1}*\bar\beta_j)
       =\Phi_\mu(\beta_{j-1})\Phi_\mu(\beta_j)^{-1}.
\]
Multiplication on the right by $\Phi_\mu(\beta_j)$ proves equality
of the two transports. Induction over the finite chain establishes
representative independence in \eqref{eq:main-limit}.
The path identities now give
\begin{equation}\label{eq:main-action}
 \varphi_{[\gamma][\eta]}
   =\Phi_\mu(\gamma*\eta)
   =\varphi_{[\gamma]}\varphi_{[\eta]},\qquad
 \varphi_{1_a}=\Id_{M_a},\qquad
 \varphi_{h^{-1}}=\varphi_h^{-1}.
\end{equation}
Hence these maps form an action on the groupoid of
Lemma~\ref{lem:quotient}. The affine identity in
Proposition~\ref{prop:path-algebra} and \eqref{eq:local} prove
\eqref{eq:main-local}. The two mesh convergence assertions are
those of Lemma~\ref{lem:mesh-limit}.

For every $\eta\in h$, \eqref{eq:path-lip} gives
$\Lip(\varphi_h)\le g(\len\eta)$. If $a=\mathcal L(h)$,
the defining infimum has representatives of length less than
$a+1/n$ for every integer $n\ge1$. Thus
\[
 \Lip(\varphi_h)\le g(a+1/n)\longrightarrow g(a).
\]
Continuity of $g$ and Lemma~\ref{lem:class-length} yield
\eqref{eq:main-lip}, including the bound for the inverse class.
Both maps are therefore Lipschitz and continuous on all galaxies.

For uniqueness, fix $\gamma$ and $D=(0=t_0<\cdots<t_n=1)$.
By Corollary~\ref{cor:subpath-class} and the action property,
\[
 \Psi_{[\gamma]}
 =\Psi_{[\gamma_{t_0,t_1}]}\cdots
       \Psi_{[\gamma_{t_{n-1},t_n}]}.
\]
Apply \eqref{eq:hybrid} with the original $\mu$ factors in the
left prefixes and the competing maps in the right suffixes. The
prefixes have chain length at most $\len\gamma$, so
\[
 \begin{aligned}
 \dist_{ab}(P_\gamma(D),\Psi_{[\gamma]})
 &\le g(\len\gamma)A_\gamma
              \sum_{j=1}^n(t_j-t_{j-1})^{q_\gamma}\\
 &\le g(\len\gamma)A_\gamma
              \mesh(D)^{q_\gamma-1}\longrightarrow0.
 \end{aligned}
\]
The same products converge to $\varphi_{[\gamma]}$ by
\eqref{eq:main-limit}. Separation gives
$\Psi_{[\gamma]}=\varphi_{[\gamma]}$ for every class. No
Lipschitz assumption on the competing maps enters this comparison.
The constructed action satisfies \eqref{eq:unique-hyp} with
$q_\gamma=p$ and
$A_\gamma=Kg(\len\gamma)\Lip(\gamma)^p$.
\end{proof}

\begin{proposition}[The path-scaling normalization]\label{rem:scaling}
The constants of the approximate flow pulled back along $\gamma$
can be taken to be
\begin{equation}\label{eq:scaled-constants}
 C_r^\gamma=C_r\Lip(\gamma)^p.
\end{equation}
In terms of these constants, the local comparison in
Theorem~\ref{thm:main} implies
\begin{equation}\label{eq:CM-corrected}
 \dist(\varphi_{[\gamma_{s,t}]},\mu_{\gamma(s)\gamma(t)})
 \le 2^p g(\Lip(\gamma)|t-s|)
        \left(\sum_r C_r^\gamma\right)\zeta(p)|t-s|^p.
\end{equation}
The factor $\Lip(\gamma)^p$ cannot be omitted from a uniform
estimate expressed in terms of the original $C_r$.
\end{proposition}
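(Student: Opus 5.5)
Three claims need proof: the scaled constants for the pulled-back flow, the corrected inequality \eqref{eq:CM-corrected}, and the necessity of the factor $\Lip(\gamma)^p$.

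\emph{Scaled constants.} Put $L=\Lip(\gamma)$ and consider the pulled-back family $\nu_{uv}=\mu_{\gamma(u)\gamma(v)}$ on $[0,1]$. For $u,v,w\in[0,1]$, \eqref{eq:defect} together with $d_P(\gamma(v),\gamma(w))\le L|v-w|$ and $a_r+b_r=p$ gives
\[
 \dist(\nu_{uv},\nu_{uw}\nu_{wv})
 \le\sum_r C_rL^{p}|v-w|^{a_r}|w-u|^{b_r},
\]
so one may take $C_r^\gamma=C_rL^p$. The chain bound transfers similarly but with a rescaled argument, $g(L\cdot)$. This is the same computation as in Proposition~\ref{prop:pullback}, except that $\gamma$ is $L$-Lipschitz rather than $1$-Lipschitz.

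\emph{Corrected inequality.} Start from \eqref{eq:main-local}:
\[
 \dist\le Kg(\ell_{s,t})L^p|t-s|^p.
\]
Three substitutions finish it. First, $K=2^pC\zeta(p)$. Second, $CL^p=\sum_rC_r^\gamma$. Third, $g(\ell_{s,t})\le g(L|t-s|)$, because $g$ is nondecreasing and $\ell_{s,t}\le L|t-s|$ by \eqref{eq:subpath-geometry}. The result is exactly \eqref{eq:CM-corrected}.

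\emph{Necessity of $L^p$.} We need a family of paths for which any bound of the form $c\,g(L|t-s|)|t-s|^p$ with $c$ built from the original $C_r$ fails as $L\to\infty$. I plan to use a translation model on $P=\R^2$ with fibers $M_x=\R$, which also works with $P=\R$. Take $\mu_{xy}$ to be translation by a chord integral $I(x,y)$ of a $1$-form that is nonexact but bounded, restricted to a compact region, so that $g$ and $f$ stay harmless. A model choice is $I(x,y)=\tfrac12(x_1+y_1)(y_2-x_2)$, the chord integral of $x_1\,dx_2$.
\begin{itemize}
\item The defect $I(x,y)-I(x,z)-I(z,y)$ is the signed area of the triangle $xzy$. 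It is bounded by $\tfrac12|x-z||z-y|$, which gives a single term with $a=b=1$ and $p=2$.
\item All maps are isometries, so $g\equiv1$ and $f\equiv0$.
\item The sewn transport along $\gamma$ is translation by the line integral $\int_\gamma x_1\,dx_2$.
\end{itemize}
Now take $\gamma$ to trace a circle of radius $R$ (or $n$ turns of a unit circle, which keeps the geometry bounded) on $[0,1]$, so $L\sim 2\pi R$ or $2\pi n$. For the full loop, $s=0$ and $t=1$, we have $\mu_{\gamma(0)\gamma(1)}=\Id$, while the transport is translation by the enclosed area: $\pi R^2$, or $n\pi$ for $n$ turns. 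A bound $c\,g(\cdot)|t-s|^2$ that is independent of $L$ would cap this uniformly, which is a contradiction.

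The $n$-turn unit circle gives only linear growth in $L$, so I will use short arcs as well. For $t-s=\tau$ small on an $n$-turn path, the swept angle is $2\pi n\tau$. The discrepancy is the circular segment area, of order $(n\tau)^3$ when $n\tau$ is small. Fixing the swept angle at a constant, i.e.\ $n\tau\approx1$, makes the discrepancy of order $1\sim L^2\tau^2$, which grows like $L^p$ at fixed $\tau$ scale. This shows the factor cannot be dropped; verifying this scaling cleanly is the main obstacle.
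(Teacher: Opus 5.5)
Your first two parts match the paper's argument: the same substitution $x=\gamma(s)$, $z=\gamma(u)$, $y=\gamma(t)$ into \eqref{eq:defect} with $a_r+b_r=p$, and the same passage from \eqref{eq:main-local} via $K=2^pC\zeta(p)$ and $g(\ell_{s,t})\le g(\Lip(\gamma)|t-s|)$. Your proof of necessity is correct but uses a genuinely different model.

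\emph{The paper's model.} It takes $P=\R$, $M_x=\R$, $\mu_{xy}(v)=v+(y-x)^2$, so $C_1=2$, $a_1=b_1=1$, $g=1$, and the straight path $\gamma_L(t)=Lt$. The partition products translate by $L^2\sum_j(t_j-t_{j-1})^2\le L^2\mesh(D)\to0$. Hence the sewn transport is the identity, and the discrepancy at $(s,t)=(0,1)$ is exactly $L^2$, against the unscaled value $8\zeta(2)$. This is a two-line computation and needs no identification of the sewn limit with an integral. It also shows the factor is forced by traversal speed alone, along a geodesic, with growth exactly $L^p$.

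\emph{Your model.} The area model $x_1\,dx_2$ is the one the paper reuses in Section~\ref{sec:curvature}. It requires identifying the sewn transport with $\int_\gamma x_1\,dx_2$, which is the trapezoidal Riemann-sum estimate of Proposition~\ref{curvature:model}; you should prove or cite it. Once that is in place, the full-loop comparison already finishes the argument. With $C_1=\tfrac12$ and $g=1$, the unscaled right side of \eqref{eq:CM-corrected} at $|t-s|=1$ is the constant $2^2\cdot\tfrac12\cdot\zeta(2)=\pi^2/3$. The radius-$R$ circle gives discrepancy $\pi R^2=L^2/(4\pi)$, which exceeds $\pi^2/3$ once $R^2>\pi/3$. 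The $n$-turn unit circle gives $n\pi$, which exceeds it for $n\ge2$.

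So the ``main obstacle'' you name is not one. Linear growth in $L$ already defeats any $L$-independent bound, and the radius-$R$ circle exhibits $L^2$ growth at fixed $|t-s|=1$. The short-arc analysis only refines this to sharpness of the exponent.

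Two side remarks in your proposal are inaccurate.
\begin{enumerate}
\item No restriction to a compact region is needed. The maps are translations, hence isometries on all of $\R^2$, so $g\equiv1$ everywhere. A compact restriction would also exclude the large circles you want to use.
\item If you meant a chord-integral model on $P=\R$, it cannot work. Chord integrals of a one-form on a line are exactly additive, so the defect vanishes and the sewn transport equals $\mu$, leaving no discrepancy. On a line you need a non-additive increment such as the paper's $(y-x)^2$.
\end{enumerate}
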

\begin{proof}
Substitution of $x=\gamma(s)$, $z=\gamma(u)$, $y=\gamma(t)$
in \eqref{eq:defect} gives
\[
 \dist(\mu_{\gamma(s)\gamma(t)},
       \mu_{\gamma(s)\gamma(u)}\mu_{\gamma(u)\gamma(t)})
 \le\sum_r C_r\Lip(\gamma)^{a_r+b_r}
                         |t-u|^{a_r}|u-s|^{b_r}.
\]
This proves \eqref{eq:scaled-constants}. Equation~\eqref{eq:CM-corrected}
uses $a_r+b_r=p$, \eqref{eq:main-local}, and
$\ell_{s,t}\le\Lip(\gamma)|t-s|$.

For necessity let $P=\R$, $M_x=\R$, and
$\mu_{xy}(v)=v+(y-x)^2$. All maps are translations, so $f=0$,
$g=1$, and
\[
 \mu_{xx}=\Id,\qquad
 \dist(\mu_{xy},\mu_{xz}\mu_{zy})=2|(y-z)(z-x)|.
\]
Thus $p=2$, $N=1$, $C_1=2$, and $a_1=b_1=1$ are admissible.
For $\gamma_L(t)=Lt$, $L>0$, the partition product translates by
\[
 L^2\sum_j(t_j-t_{j-1})^2
 \le L^2\mesh(D)\longrightarrow0.
\]
Hence $\varphi_{[\gamma_L]}=\Id$ and
\[
 \dist(\varphi_{[\gamma_L]},\mu_{0L})=L^2.
\]
The expression in \cite[Conjecture~4.13]{CM} without the scaling
factor is $8\zeta(2)$ for these fixed approximate-action data.
It fails whenever $L^2>8\zeta(2)$.
\end{proof}

Theorem~\ref{thm:main} therefore proves the full structural assertion
of Conjecture~4.13 and its correctly scaled local estimate. The
equivalence relation, fiber class, and three-point hypotheses are
unchanged. The length estimate in \eqref{eq:main-local} is intrinsic
to the represented subpath, and \eqref{eq:CM-corrected} recovers the
parameter-based formulation.

The action also assigns transport to based loops. We next construct its
holonomy representation and identify the condition for factorization
through relative Lipschitz homotopy.

\section{Based holonomy and homotopy factorization}
\label{holo:sec}

For the holonomy anticipated in \cite[Remark~4.15]{CM}, the domain at
$a$ is the isotropy group of the thin groupoid at $a$. We first define
the relative Lipschitz-homotopy quotient, keeping the source and
multiplication conventions of Section~\ref{sec:paths-quotient}.

\begin{proposition}\label{holo:quotient}
Relative Lipschitz homotopy defines a groupoid
$\mathcal G_P^{\mathrm{Lip}}$ whose objects are the points of $P$.
For a path $\gamma:a\to b$, its class $[\gamma]_{\mathrm{Lip}}$
has source $b$ and range $a$. The operations are
\[
 [\gamma]_{\mathrm{Lip}}[\eta]_{\mathrm{Lip}}
    =[\gamma*\eta]_{\mathrm{Lip}},\qquad
 1_a=[c_a]_{\mathrm{Lip}},\qquad
 [\gamma]_{\mathrm{Lip}}^{-1}=[\bar\gamma]_{\mathrm{Lip}}.
\]
The map
\begin{equation}\label{holo:q}
 q:\Gthin\longrightarrow\mathcal G_P^{\mathrm{Lip}},\qquad
 q([\gamma])=[\gamma]_{\mathrm{Lip}},
\end{equation}
is a surjective groupoid functor and is the identity on objects.
\end{proposition}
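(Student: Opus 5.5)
The plan mirrors the proof of Lemma~\ref{lem:quotient}, with the thin-image condition \eqref{eq:thin-image} dropped. First, relative Lipschitz homotopy is an equivalence relation on Lipschitz paths $a\to b$.
\begin{itemize}
\item Reflexivity: use $H(s,t)=\gamma(s)$.
\item Symmetry: use $H(s,1-t)$.
\item Transitivity: stack two homotopies vertically, $H_1(s,2t)$ on $t\le\frac12$ and $H_2(s,2t-1)$ on $t\ge\frac12$.
\end{itemize}
The stacked map agrees on the interface $t=\frac12$. It is Lipschitz by the segment-splitting argument already used in Lemma~\ref{lem:quotient}: split a segment at the interface and add the Lipschitz bounds.

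The key step is then the observation that thin equivalence implies relative Lipschitz homotopy. This gives the functor $q$ and also supplies the groupoid laws, since all of these were established in Lemma~\ref{lem:quotient} for thin classes. By transitivity, it suffices to treat one generator: if $\alpha*\bar\beta$ is thin with contraction $H$, then $\alpha\simeq\beta$ relative to endpoints. Two routes are available.
\begin{itemize}
\item Directly: the generators are relative contractions of $\alpha*\bar\beta$. Convert them to a relative homotopy from $\alpha$ to $\beta$ by the standard square reparameterization, precomposing $H$ with a piecewise affine Lipschitz map of the square. This map sends the bottom edge to the bottom, the top edge $s\mapsto(\frac{1}{2}+\frac{1}{2}(1-s),0)$ traversing $\bar\beta$ backwards, and the side edges into the sides of $Q$ or the top $t=1$. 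All of these are constant at $a$ or $b$ wherever required.
\item Alternatively, and more cleanly: the thin-equivalence arguments of Lemma~\ref{lem:thin-calculus} and Lemma~\ref{lem:quotient} never used \eqref{eq:thin-image} except to verify image containment. Deleting those sentences proves, verbatim, that relative Lipschitz homotopy is a congruence for concatenation, that reparameterizations are identified, and that spurs are trivial.
\end{itemize}
With either route, $\mathcal G_P^{\mathrm{Lip}}$ inherits associativity, units and inverses with the stated formulas. Indeed, each law holds already on thin classes, and $q$ maps thin-equal paths to Lipschitz-equal ones.

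The remaining point is congruence: if $\alpha\simeq\beta$ via $H$, then $\alpha*\eta\simeq\beta*\eta$ via $(s,t)\mapsto H(2s,t)$ for $s\le\frac12$ and $\eta(2s-1)$ for $s\ge\frac12$. Left whiskering is analogous, and the pieces agree on the interface because $H(1,t)=b$. For two homotopic pairs, combine left and right whiskering with transitivity. Well-definedness of the operations follows. Finally, $q$ is well defined because thin equivalence implies relative homotopy, as above. It is a functor because both products are induced by $*$, and it is the identity on objects. It is surjective on morphisms because every Lipschitz class has a Lipschitz representative $\gamma$, whose thin class maps to it. The source and range conventions are copied from Lemma~\ref{lem:quotient}.

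The main obstacle is the implication from a thin contraction of $\alpha*\bar\beta$ to a relative homotopy $\alpha\simeq\beta$. It needs an explicit Lipschitz square map with the correct boundary behaviour, in particular sending the corners to points where $H$ equals $a$ or $b$. I would first try the formula
\[
 K(s,t)=H\Bigl(\tfrac{s}{2}+t\bigl(\tfrac{1}{2}-\tfrac{s}{2}\bigr)\cdot\ldots\Bigr),
\]
and if that bookkeeping becomes awkward, fall back on the deletion route through Lemma~\ref{lem:quotient}. In the latter route, $\alpha\simeq\alpha*(\bar\beta*\beta)\simeq(\alpha*\bar\beta)*\beta\simeq c_a*\beta\simeq\beta$ uses only congruence, the unit laws and the contraction $H$ itself.
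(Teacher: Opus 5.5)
Your outline is fine: the equivalence-relation checks, the juxtaposition proof of congruence, surjectivity and identity on objects all work. The gap is in the pivotal implication, that a thin generator $\alpha*\bar\beta$ yields $\alpha\simeq\beta$ relative to endpoints. Neither route establishes it, and both well-definedness of $q$ and, in your plan, the groupoid laws depend on it.

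In the second route, deleting the image-containment sentences from Lemmas~\ref{lem:thin-calculus} and~\ref{lem:quotient} does not show that reparameterizations are identified \emph{under relative Lipschitz homotopy}. Those arguments produce a relative contraction of the loop $\gamma*\overline{\gamma\circ\rho}$, and of the analogous loop for the two bracketings. That is a statement about the relation ``$\alpha*\bar\beta$ is relatively contractible''. Turning such a contraction into $\gamma\simeq\gamma\circ\rho$ is precisely the implication you are trying to prove. Nor can you import the unit and associativity laws for $\simeq$ from the thin groupoid, because your plan routes that inheritance through the same implication. So the chain $\alpha\simeq\alpha*(\bar\beta*\beta)\simeq(\alpha*\bar\beta)*\beta\simeq c_a*\beta\simeq\beta$ has unsupported links. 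It needs associativity and the spur contraction of $\bar\beta*\beta$, not only the unit laws.

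The first route is unfinished, since its formula ends in an ellipsis. Its boundary description is also wrong. The sides and top of $Q$ carry the value $a$, so only the left edge of the domain may be routed around them. The right edge must be collapsed to $(\tfrac12,0)$, the only point where $H$ is guaranteed to equal $b$. A Lipschitz extension to the interior would still have to be supplied.

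The missing ingredient is elementary. For every Lipschitz $\rho:[0,1]\to[0,1]$ fixing $0$ and $1$, the map $R(s,t)=\gamma((1-t)s+t\rho(s))$ is a relative Lipschitz homotopy from $\gamma$ to $\gamma\circ\rho$. It gives the unit and associativity laws for $\simeq$ directly. With the spur contraction $S_\gamma$ and your juxtaposition congruence, $\mathcal G_P^{\mathrm{Lip}}$ is then a groupoid in its own right. Your chain becomes the computation $[\alpha]_{\mathrm{Lip}}=[\alpha]_{\mathrm{Lip}}[\bar\beta]_{\mathrm{Lip}}[\beta]_{\mathrm{Lip}}=[\alpha*\bar\beta]_{\mathrm{Lip}}[\beta]_{\mathrm{Lip}}=[\beta]_{\mathrm{Lip}}$, which is exactly how the paper proves that $q$ is well defined. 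After this repair you need neither the square reparameterization nor the inheritance of the laws from thin classes.
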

\begin{proof}
Reflexivity is given by $H(s,t)=\gamma(s)$. If $H$ joins
$\gamma_0$ to $\gamma_1$, then $H(s,1-t)$ joins $\gamma_1$ to
$\gamma_0$. If $H^{01}$ joins $\gamma_0$ to $\gamma_1$ and
$H^{12}$ joins $\gamma_1$ to $\gamma_2$, then
\[
 H^{02}(s,t)=
 \begin{cases}
 H^{01}(s,2t),&0\le t\le\tfrac12,\\
 H^{12}(s,2t-1),&\tfrac12\le t\le1
 \end{cases}
\]
joins $\gamma_0$ to $\gamma_2$. Both branches agree at $t=1/2$,
and finite Lipschitz gluing proves the required regularity.
For homotopies $H^\gamma$ and $H^\eta$ with fixed common junction
$b$, the formula
\[
 J(s,t)=
 \begin{cases}
 H^\gamma(2s,t),&0\le s\le\tfrac12,\\
 H^\eta(2s-1,t),&\tfrac12\le s\le1
 \end{cases}
\]
is a relative Lipschitz homotopy of their concatenations. The
branches agree at $s=1/2$ because both values are $b$.
Consequently multiplication is well defined on homotopy classes.

For any endpoint-preserving Lipschitz map $\rho:[0,1]\to[0,1]$,
\[
 R(s,t)=\gamma((1-t)s+t\rho(s))
\]
is a relative Lipschitz homotopy from $\gamma$ to $\gamma\circ\rho$.
The unit paths insert constant pauses. The two bracketings of a
threefold concatenation are increasing piecewise affine
reparameterizations of the path that assigns one third of the
parameter interval to each factor. Their classes therefore agree.
Finally,
\[
 S_\gamma(s,t)=\gamma((1-t)\min\{2s,2-2s\})
\]
contracts $\gamma*\bar\gamma$ relative to its basepoint. Applying
the formula to $\bar\gamma$ contracts the reverse product.
This proves the groupoid identities.

Let $\beta_0,\ldots,\beta_m$ be a thin chain. Each elementary loop
has a relative Lipschitz contraction, so
\[
 [\beta_{j-1}]_{\mathrm{Lip}}
 [\beta_j]_{\mathrm{Lip}}^{-1}
 =[\beta_{j-1}*\bar\beta_j]_{\mathrm{Lip}}=1.
\]
Right multiplication gives
$[\beta_{j-1}]_{\mathrm{Lip}}=[\beta_j]_{\mathrm{Lip}}$.
Thus \eqref{holo:q} is well defined. The displayed operations use
the same representative paths in both groupoids, so $q$ preserves
them. Every morphism of $\mathcal G_P^{\mathrm{Lip}}$ is the image
of the thin class of one of its representatives.
\end{proof}

For each $a\in P$, write
\begin{equation}\label{holo:groups}
 \pi_{1,\mathrm{thin}}^{\mathrm{Lip}}(P,a)
       =\operatorname{Aut}_{\Gthin}(a),\qquad
 \pi_1^{\mathrm{Lip}}(P,a)
       =\operatorname{Aut}_{\mathcal G_P^{\mathrm{Lip}}}(a).
\end{equation}
Restriction of $q$ gives a surjective group homomorphism
\[
 q_a:\pi_{1,\mathrm{thin}}^{\mathrm{Lip}}(P,a)
            \longrightarrow\pi_1^{\mathrm{Lip}}(P,a).
\]
Its kernel consists exactly of thin classes represented by based
loops admitting a relative Lipschitz contraction. This property
holds for every representative of such a class, because $q$ is
well defined.

For a bijection $F:M_a\to M_a$, define its displacement by
\[
 \operatorname{disp}_a(F)=\sup_{u\in M_a}d_a(Fu,u).
\]
Let $\operatorname{BiLip}_{\mathrm{bd}}(M_a)$ be the set of
bijections for which $F,F^{-1}$ are Lipschitz and
$\operatorname{disp}_a(F)<\infty$.

\begin{lemma}\label{holo:target}
The set $\operatorname{BiLip}_{\mathrm{bd}}(M_a)$ is a group under
composition. Its elements preserve every galaxy of $M_a$ setwise.
For $F,G$ in this group,
\begin{equation}\label{holo:disp}
 \operatorname{disp}_a(FG)
 \le\operatorname{disp}_a(F)+\operatorname{disp}_a(G),\qquad
 \operatorname{disp}_a(F^{-1})=\operatorname{disp}_a(F).
\end{equation}
\end{lemma}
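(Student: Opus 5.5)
The plan is to verify the group axioms directly from the definitions, starting with the galaxy statement because it makes every later distance finite. Let $F\in\operatorname{BiLip}_{\mathrm{bd}}(M_a)$, and write $\operatorname{disp}$ for $\operatorname{disp}_a$. For every $u$ we have $d_a(Fu,u)\le\operatorname{disp}(F)<\infty$, so $Fu$ lies in the galaxy of $u$ and $F$ maps each galaxy $G$ into itself. The inverse $F^{-1}$ is also a bijection. Putting $w=F^{-1}u$ gives
\[
 d_a(F^{-1}u,u)=d_a(w,Fw)\le\operatorname{disp}(F).
\]
Taking the supremum over $u$ gives $\operatorname{disp}(F^{-1})\le\operatorname{disp}(F)$. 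Applying the same bound with $F^{-1}$ in place of $F$ gives the reverse inequality, so the second identity in \eqref{holo:disp} holds. In particular $F^{-1}$ also maps $G$ into $G$. Combining the two inclusions yields $G=F(F^{-1}G)\subseteq F(G)\subseteq G$, so $F(G)=G$ and every galaxy is preserved setwise.

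For closure under composition, take $F,G$ in the set. The composite $FG$ is a bijection with inverse $G^{-1}F^{-1}$. Both $FG$ and its inverse are Lipschitz, with constants at most the products of the individual constants. Here I will use the paper's convention that Lipschitz bounds are only asserted for pairs at finite distance; since each map preserves galaxies, finite-distance pairs stay at finite distance, so the constants multiply. For the displacement, the triangle inequality at each $u$ gives
\[
 d_a(FGu,u)\le d_a(F(Gu),Gu)+d_a(Gu,u)\le\operatorname{disp}(F)+\operatorname{disp}(G).
\]
This is the first inequality in \eqref{holo:disp}, and it also shows $FG$ has bounded displacement. The identity map has displacement zero and Lipschitz constant at most one. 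Associativity is inherited from composition of maps, and the inverse $F^{-1}$ was shown above to lie in the set. So the set is a group.

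I do not expect a genuine obstacle. The only step needing care is the extended-metric bookkeeping: one must check that "Lipschitz" only needs to be asserted on finite-distance pairs, and that bijections with bounded displacement keep such pairs at finite distance. The galaxy-preservation argument above supplies exactly this.
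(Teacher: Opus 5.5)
Your proof is correct and follows essentially the same route as the paper: the triangle inequality gives subadditivity of displacement, the substitution $w=F^{-1}u$ gives the inverse identity (the paper writes it directly as an equality of suprema, where you prove two inequalities), and the finite bound $d_a(Fu,u)\le\operatorname{disp}_a(F)$, applied to both $F$ and $F^{-1}$, gives galaxy preservation. Your extended-metric remark about composing Lipschitz bounds is sound, although finite-distance pairs are already sent to finite-distance pairs by the Lipschitz bound itself, so galaxy preservation is not needed for that step.
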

\begin{proof}
The identity has zero displacement. Products and their inverses
are Lipschitz, with
\[
 \Lip(FG)\le\Lip(F)\Lip(G),\qquad
 \Lip((FG)^{-1})\le\Lip(G^{-1})\Lip(F^{-1}).
\]
For each $u\in M_a$,
\[
 d_a(FG(u),u)
 \le d_a(FG(u),G(u))+d_a(G(u),u)
 \le\operatorname{disp}_a(F)+\operatorname{disp}_a(G).
\]
Surjectivity permits the substitution $v=F(u)$ and gives
\[
 \sup_{v\in M_a}d_a(F^{-1}(v),v)
   =\sup_{u\in M_a}d_a(u,F(u)).
\]
These estimates prove \eqref{holo:disp} and closure under products
and inverses. The finite bound $d_a(Fu,u)\le\operatorname{disp}_a(F)$
places $Fu$ in the galaxy of $u$. The same argument for $F^{-1}$
gives preservation of that galaxy onto itself.
\end{proof}

\begin{theorem}\label{holo:representation}
For every $a\in P$, the sewn action defines a homomorphism
\begin{equation}\label{holo:hol}
 \operatorname{Hol}_a:
 \pi_{1,\mathrm{thin}}^{\mathrm{Lip}}(P,a)
 \longrightarrow\operatorname{BiLip}_{\mathrm{bd}}(M_a),\qquad
 \operatorname{Hol}_a([\lambda])=\Phi_\mu(\lambda).
\end{equation}
For every representative $\lambda$ of $h$,
\begin{equation}\label{holo:limit}
 \operatorname{Hol}_a(h)
 =\lim_{\mesh(D)\to0}
       \mu_{\lambda(t_0)\lambda(t_1)}\cdots
       \mu_{\lambda(t_{n-1})\lambda(t_n)}.
\end{equation}
Put $L(h)=\inf_{\lambda\in h}\len(\lambda)$. Then
\begin{align}
 \max\{\Lip(\operatorname{Hol}_a(h)),
          \Lip(\operatorname{Hol}_a(h)^{-1})\}
 &\le g(L(h)),\label{holo:bound}\\
 \operatorname{disp}_a(\operatorname{Hol}_a(h))
 &\le K g(L(h))L(h)^p<\infty.
       \label{holo:displacement}
\end{align}
This homomorphism is determined uniquely as the restriction of
an action satisfying the local comparison property
\eqref{eq:unique-hyp}.
\end{theorem}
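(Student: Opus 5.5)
The plan is to restrict the action $\varphi$ of Theorem~\ref{thm:main} to the isotropy group at $a$. For a based loop $\lambda:a\to a$ I set $\operatorname{Hol}_a([\lambda])=\varphi_{[\lambda]}=\Phi_\mu(\lambda)$. This is independent of the representative by \eqref{eq:main-limit}. It is a homomorphism by \eqref{eq:main-action}: $\varphi_{hk}=\varphi_h\varphi_k$, $\varphi_{1_a}=\Id_{M_a}$, and $\varphi_{h^{-1}}=\varphi_h^{-1}$. The limit formula \eqref{holo:limit} is exactly \eqref{eq:main-limit} applied to a representative. The Lipschitz bound \eqref{holo:bound} is \eqref{eq:main-lip}, since $L(h)=\mathcal L(h)$ by Lemma~\ref{lem:class-length}. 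Thus $\operatorname{Hol}_a(h)$ is a bijection with inverse $\operatorname{Hol}_a(h^{-1})$, and both maps are Lipschitz.

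The only new ingredient is the displacement estimate \eqref{holo:displacement}. It also gives membership in $\operatorname{BiLip}_{\mathrm{bd}}(M_a)$, where the group structure comes from Lemma~\ref{holo:target}. Fix any representative $\lambda\in h$ and apply \eqref{eq:local} with $s=0$, $t=1$. Because $\lambda(0)=\lambda(1)=a$ and $\mu_{aa}=\Id_{M_a}$ by \eqref{eq:diag},
\[
 \operatorname{disp}_a(\Phi_\mu(\lambda))
 =\dist_{aa}(\phi^\lambda_{0,1},\mu_{aa})
 \le Kg(\len\lambda)\len(\lambda)^p .
\]
The left side does not depend on the representative. I then take representatives with $\len\lambda<L(h)+1/n$ and let $n\to\infty$. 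Continuity of $g$ and of $x\mapsto x^p$ gives the bound $Kg(L(h))L(h)^p$, which is finite. The constant loop gives $L=0$, consistent with the identity.

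For uniqueness, suppose $\Psi$ is any action of $\Gthin$ satisfying \eqref{eq:unique-hyp}. The uniqueness clause of Theorem~\ref{thm:main} gives $\Psi=\varphi$ on all classes, in particular on loops at $a$. Hence the restricted homomorphism is determined. I expect no step to be a real obstacle. The only points needing care are passing the displacement bound to the infimum over representatives, and checking that $\operatorname{disp}_a$ agrees with the supremum distance to $\mu_{aa}=\Id$, which holds because both use the same supremum formula.
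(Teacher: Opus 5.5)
Your proposal is correct and follows the paper's proof essentially step by step. It restricts the thin-groupoid action of Theorem~\ref{thm:main}, bounds the displacement by the local estimate at $(s,t)=(0,1)$ with $\mu_{aa}=\Id_{M_a}$, passes to the infimum over representatives, and inherits uniqueness from Theorem~\ref{thm:main}. The differences are cosmetic: you cite \eqref{eq:main-lip} directly for both $\operatorname{Hol}_a(h)$ and its inverse, whereas the paper re-derives the inverse bound from $L(h^{-1})=L(h)$, and you pass to the infimum using continuity of $g$ along a minimizing sequence of representatives where the paper uses continuity and monotonicity of $r\mapsto g(r)r^p$.
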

\begin{proof}
Theorem~\ref{thm:main} gives representative independence and
\[
 \operatorname{Hol}_a(hk)
  =\operatorname{Hol}_a(h)\operatorname{Hol}_a(k),\qquad
 \operatorname{Hol}_a(1)=\Id,\qquad
 \operatorname{Hol}_a(h^{-1})=\operatorname{Hol}_a(h)^{-1}.
\]
For a loop $\lambda:a\to a$, the intrinsic local estimate in
Theorem~\ref{thm:main} at $s=0,t=1$ and $\mu_{aa}=\Id$ give
\[
 \operatorname{disp}_a(\Phi_\mu(\lambda))
 \le K g(\len\lambda)\len(\lambda)^p.
\]
The right-hand side is finite. Lemma~\ref{holo:target} therefore
identifies the target group in \eqref{holo:hol}. The function
$r\mapsto g(r)r^p$ is continuous and nondecreasing. Taking the
infimum over all representatives proves \eqref{holo:displacement}.
The partition limit is \eqref{eq:main-limit}. Reversal is a
bijection from the representatives of $h$ to those of $h^{-1}$
and preserves length. Thus $L(h^{-1})=L(h)$, and applying
\eqref{eq:main-lip} to both classes proves \eqref{holo:bound}.
The last assertion is the restriction of the uniqueness statement
in Theorem~\ref{thm:main}. It imposes no uniqueness claim on
arbitrary homomorphisms of the isotropy group.
\end{proof}

\begin{proposition}\label{holo:basepoint}
Let $\sigma:a\to b$ be a Lipschitz path and put
$Q=\Phi_\mu(\sigma):M_b\to M_a$. The map
\[
 C_\sigma:\pi_{1,\mathrm{thin}}^{\mathrm{Lip}}(P,b)
       \longrightarrow\pi_{1,\mathrm{thin}}^{\mathrm{Lip}}(P,a),
 \qquad C_\sigma(h)=[\sigma]h[\sigma]^{-1},
\]
is an isomorphism depending only on $[\sigma]$, and
\begin{equation}\label{holo:covariance}
 \operatorname{Hol}_a(C_\sigma(h))
       =Q\operatorname{Hol}_b(h)Q^{-1}.
\end{equation}
Conjugation by $Q$ maps
$\operatorname{BiLip}_{\mathrm{bd}}(M_b)$ isomorphically onto
$\operatorname{BiLip}_{\mathrm{bd}}(M_a)$.
\end{proposition}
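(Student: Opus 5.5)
The plan has three short steps: establish that $C_\sigma$ is a group isomorphism, apply the action $\varphi$ of Theorem~\ref{thm:main} to obtain the covariance identity, and check that conjugation by $Q$ preserves the target groups.

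First, $C_\sigma$. Since $[\sigma]$ is a morphism of $\Gthin$ with range $a$ and source $b$, the product $[\sigma]h[\sigma]^{-1}$ is defined for every $h\in\operatorname{Aut}_{\Gthin}(b)$ and lies in $\operatorname{Aut}_{\Gthin}(a)$. By construction $C_\sigma$ depends only on $[\sigma]$. The groupoid laws \eqref{eq:groupoid-laws} give
\[
 C_\sigma(hk)=[\sigma]h[\sigma]^{-1}[\sigma]k[\sigma]^{-1}=C_\sigma(h)C_\sigma(k).
\]
The map $h'\mapsto[\sigma]^{-1}h'[\sigma]$, which is $C_{\bar\sigma}$, is a two-sided inverse, so $C_\sigma$ is an isomorphism.

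Second, the covariance identity. Apply $\varphi$ from Theorem~\ref{thm:main} together with the action identities \eqref{eq:main-action}:
\[
 \operatorname{Hol}_a(C_\sigma(h))=\varphi_{[\sigma]}\varphi_h\varphi_{[\sigma]^{-1}}
 =Q\operatorname{Hol}_b(h)Q^{-1}.
\]
Here $\varphi_{[\sigma]}=\Phi_\mu(\sigma)=Q$ and $\varphi_{[\sigma]^{-1}}=Q^{-1}$. The restriction of $\varphi$ to isotropy groups is $\operatorname{Hol}$ by \eqref{holo:hol}.

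Third, the target groups, which is the only step with real content. Let $F\in\operatorname{BiLip}_{\mathrm{bd}}(M_b)$. Then $QFQ^{-1}$ is a bijection of $M_a$. Both $QFQ^{-1}$ and its inverse $QF^{-1}Q^{-1}$ are compositions of maps that are Lipschitz by \eqref{eq:main-lip} and by hypothesis, so both are Lipschitz.

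For the displacement, take $u\in M_a$ and set $v=Q^{-1}u$. Since $F$ has finite displacement, $d_b(Fv,v)\le\operatorname{disp}_b(F)<\infty$, so $Fv$ and $v$ lie in the same galaxy. The Lipschitz bound for $Q$ then applies:
\[
 d_a(QFQ^{-1}u,u)=d_a(QFv,Qv)\le\Lip(Q)\operatorname{disp}_b(F).
\]
Hence $\operatorname{disp}_a(QFQ^{-1})\le g(\mathcal L([\sigma]))\operatorname{disp}_b(F)<\infty$.

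Conjugation is a homomorphism, and conjugation by $Q^{-1}=\Phi_\mu(\bar\sigma)$ inverts it by the symmetric argument. The main point to check carefully is that the Lipschitz bound is used only on pairs at finite distance, which holds because finite displacement keeps $Fv$ in the galaxy of $v$.
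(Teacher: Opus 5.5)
Your proposal is correct and follows essentially the same route as the paper: the groupoid identities show $C_\sigma$ is a homomorphism with inverse $C_{\bar\sigma}$, the action law gives the covariance identity, and the substitution $v=Q^{-1}u$ gives $\operatorname{disp}_a(QFQ^{-1})\le\Lip(Q)\operatorname{disp}_b(F)$, with the symmetric argument for $Q^{-1}$ giving surjectivity. Your explicit remark that finite displacement keeps $Fv$ in the galaxy of $v$, so that the Lipschitz bound for $Q$ applies, makes precise a step the paper leaves implicit.
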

\begin{proof}
The groupoid identities give
\[
 C_\sigma(h)C_\sigma(k)
  =[\sigma]h[\sigma]^{-1}[\sigma]k[\sigma]^{-1}
  =[\sigma]hk[\sigma]^{-1},\qquad
 C_{\bar\sigma}C_\sigma=\Id,
 \quad C_\sigma C_{\bar\sigma}=\Id.
\]
The action law proves \eqref{holo:covariance}. Its factors have
the types
\[
 M_a\xrightarrow{Q^{-1}}M_b
 \xrightarrow{\operatorname{Hol}_b(h)}M_b
 \xrightarrow{Q}M_a.
\]
For $F\in\operatorname{BiLip}_{\mathrm{bd}}(M_b)$, substitution
$v=Q(u)$ gives
\[
 \operatorname{disp}_a(QFQ^{-1})
   =\sup_{u\in M_b}d_a(QF(u),Q(u))
   \le\Lip(Q)\operatorname{disp}_b(F)<\infty.
\]
Both $QFQ^{-1}$ and its inverse are Lipschitz. Applying the
same estimate to $Q^{-1}$ proves that conjugation is onto and
has the stated inverse.
\end{proof}

\begin{theorem}\label{holo:flatness}
For the sewn action $\varphi$, the following conditions are
equivalent.
\begin{enumerate}[label=\textup{(\roman*)}]
\item There is an action $\widehat\varphi$ of
$\mathcal G_P^{\mathrm{Lip}}$ on the given fibers satisfying
$\varphi_h=\widehat\varphi_{q(h)}$ for every $h\in\Gthin$.
\item Relatively Lipschitz-homotopic paths have equal sewn transport.
\item Every based loop with a relative Lipschitz contraction has
identity sewn transport.
\item For every $a\in P$,
$\ker q_a\subseteq\ker\operatorname{Hol}_a$.
\item For every $a\in P$ there is a homomorphism
\[
 \widehat{\operatorname{Hol}}_a:
 \pi_1^{\mathrm{Lip}}(P,a)
       \longrightarrow\operatorname{BiLip}_{\mathrm{bd}}(M_a),
 \qquad
 \operatorname{Hol}_a=\widehat{\operatorname{Hol}}_a q_a.
\]
\end{enumerate}
Whenever they exist, the factorizations in \textup{(i)} and
\textup{(v)} are unique. Define
\begin{equation}\label{holo:obstruction}
 \mathcal H_a=\operatorname{im}\operatorname{Hol}_a,
 \qquad
 \mathcal N_a=\operatorname{Hol}_a(\ker q_a).
\end{equation}
Then $\mathcal N_a$ is a normal subgroup of $\mathcal H_a$.
Conditions \textup{(i)--(v)} hold exactly when
$\mathcal N_a=\{\Id_{M_a}\}$ for every $a$.
\end{theorem}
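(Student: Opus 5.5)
The argument is purely algebraic: the analytic input is already contained in Theorem~\ref{thm:main}, Proposition~\ref{holo:quotient} and Theorem~\ref{holo:representation}. I will prove the cycle (i)$\Rightarrow$(ii)$\Rightarrow$(iii)$\Rightarrow$(iv)$\Rightarrow$(v)$\Rightarrow$(iii)$\Rightarrow$(ii)$\Rightarrow$(i). Uniqueness and the normal-subgroup statement are handled separately afterwards.

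For (i)$\Rightarrow$(ii), suppose $\gamma_0,\gamma_1$ are relatively Lipschitz homotopic. Then $q([\gamma_0])=q([\gamma_1])$, so
\[
\Phi_\mu(\gamma_0)=\varphi_{[\gamma_0]}=\widehat\varphi_{q[\gamma_0]}=\widehat\varphi_{q[\gamma_1]}=\Phi_\mu(\gamma_1).
\]
For (ii)$\Rightarrow$(iii), compare the loop with $c_a$, whose transport is $\Id_{M_a}$.

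For (iii)$\Rightarrow$(iv), recall the description of $\ker q_a$ after \eqref{holo:groups}: its elements are represented by loops admitting a relative Lipschitz contraction, so $\operatorname{Hol}_a$ of any such element is the identity.

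For (iv)$\Rightarrow$(v), note that $q_a$ is surjective. A homomorphism vanishing on $\ker q_a$ therefore factors uniquely through the quotient $\pi_{1,\mathrm{thin}}^{\mathrm{Lip}}(P,a)/\ker q_a\cong\pi_1^{\mathrm{Lip}}(P,a)$, by the first isomorphism theorem. This factorization is $\widehat{\operatorname{Hol}}_a$.

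For (v)$\Rightarrow$(iii), take a loop $\lambda$ with a relative contraction. Then $q_a([\lambda])=1$, hence $\Phi_\mu(\lambda)=\widehat{\operatorname{Hol}}_a(1)=\Id$.

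For (iii)$\Rightarrow$(ii), let $\gamma_0,\gamma_1:a\to b$ be relatively homotopic. By Proposition~\ref{holo:quotient},
\[
[\gamma_0*\bar\gamma_1]_{\mathrm{Lip}}=[\gamma_0]_{\mathrm{Lip}}[\gamma_1]_{\mathrm{Lip}}^{-1}=1_a,
\]
so $\gamma_0*\bar\gamma_1$ admits a relative Lipschitz contraction. Condition (iii) and \eqref{eq:path-algebra} then give $\Phi_\mu(\gamma_0)\Phi_\mu(\gamma_1)^{-1}=\Id$.

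For (ii)$\Rightarrow$(i), define $\widehat\varphi_{[\gamma]_{\mathrm{Lip}}}=\Phi_\mu(\gamma)$. This is well defined by (ii). The action laws follow from \eqref{eq:main-action}, since $q$ preserves representatives and the groupoid operations. The identity $\varphi_h=\widehat\varphi_{q(h)}$ holds by construction.

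Uniqueness follows from surjectivity. Because $q$ and each $q_a$ are surjective, any two factorizations agree on every element of the form $q(h)$, respectively $q_a(h)$, and these exhaust the target. Surjectivity of $q$ is part of Proposition~\ref{holo:quotient}.

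Finally, $\ker q_a$ is normal in $\pi_{1,\mathrm{thin}}^{\mathrm{Lip}}(P,a)$, and the image of a normal subgroup under a homomorphism is normal in the image. Hence $\mathcal N_a\trianglelefteq\mathcal H_a$. The statement $\mathcal N_a=\{\Id\}$ for all $a$ is literally condition (iv). I expect no real obstacle. The only point requiring care is the step (iii)$\Rightarrow$(ii): it uses that relative homotopy classes form a groupoid with the stated inverse, so that $\gamma_0*\bar\gamma_1$ is relatively contractible, and this is supplied by Proposition~\ref{holo:quotient}.
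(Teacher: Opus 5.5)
Your proposal is correct and follows essentially the same argument as the paper, including the key step \textup{(iii)}$\Rightarrow$\textup{(ii)} via relative contractibility of $\gamma_0*\bar\gamma_1$, the definition $\widehat\varphi_{[\gamma]_{\mathrm{Lip}}}=\Phi_\mu(\gamma)$, and uniqueness from surjectivity of $q$ and $q_a$. The differences are cosmetic. You return from \textup{(v)} through \textup{(iii)} rather than \textup{(iv)}. Where the paper writes out the explicit well-definedness check and the conjugation computation $q_a(gng^{-1})=1$, you cite the first isomorphism theorem and the fact that the image of a normal subgroup is normal in the image.
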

\begin{proof}
If (i) holds and $[\gamma_0]_{\mathrm{Lip}}
=[\gamma_1]_{\mathrm{Lip}}$, then
\[
 \Phi_\mu(\gamma_0)
 =\widehat\varphi_{[\gamma_0]_{\mathrm{Lip}}}
 =\widehat\varphi_{[\gamma_1]_{\mathrm{Lip}}}
 =\Phi_\mu(\gamma_1).
\]
Thus (ii) holds. Conversely, (ii) makes
$\widehat\varphi_{[\gamma]_{\mathrm{Lip}}}=\Phi_\mu(\gamma)$
well defined. Concatenation, constant paths and reversal give
the action identities, proving (i).

Condition (ii) applied to a loop and its constant contraction
endpoint gives (iii). Under (iii), let
$[\gamma_0]_{\mathrm{Lip}}=[\gamma_1]_{\mathrm{Lip}}$.
Proposition~\ref{holo:quotient} gives
\[
 [\gamma_0*\bar\gamma_1]_{\mathrm{Lip}}
   =[\gamma_0]_{\mathrm{Lip}}
    [\gamma_1]_{\mathrm{Lip}}^{-1}=1.
\]
The represented loop therefore admits a relative Lipschitz
contraction. Hence
\[
 \Id=\Phi_\mu(\gamma_0*\bar\gamma_1)
     =\Phi_\mu(\gamma_0)\Phi_\mu(\gamma_1)^{-1},
\]
which gives (ii) after right composition by
$\Phi_\mu(\gamma_1)$. The description of $\ker q_a$ after
\eqref{holo:groups} proves equivalence of (iii) and (iv).

Assume (iv). For $\widehat h\in\pi_1^{\mathrm{Lip}}(P,a)$,
define
\[
 \widehat{\operatorname{Hol}}_a(\widehat h)
      =\operatorname{Hol}_a(h)
      \quad\text{for any }h\text{ with }q_a(h)=\widehat h.
\]
Such preimages exist by surjectivity. If $q_a(h')=q_a(h)$, then
$h'^{-1}h\in\ker q_a$, so
\[
 \operatorname{Hol}_a(h')^{-1}\operatorname{Hol}_a(h)
 =\operatorname{Hol}_a(h'^{-1}h)=\Id.
\]
The value is independent of the preimage. If $h,k$ represent
$\widehat h,\widehat k$ under $q_a$, then $hk$ represents their
product, and
\[
 \widehat{\operatorname{Hol}}_a(\widehat h\widehat k)
 =\operatorname{Hol}_a(hk)
 =\widehat{\operatorname{Hol}}_a(\widehat h)
  \widehat{\operatorname{Hol}}_a(\widehat k).
\]
This proves (v). Conversely, (v) gives
$\operatorname{Hol}_a(h)
=\widehat{\operatorname{Hol}}_a(1)=\Id$ whenever
$h\in\ker q_a$, proving (iv). Surjectivity of $q$ and $q_a$
proves uniqueness of both factorizations.

For $g\in\pi_{1,\mathrm{thin}}^{\mathrm{Lip}}(P,a)$ and
$n\in\ker q_a$,
\[
 q_a(gng^{-1})=q_a(g)1q_a(g)^{-1}=1,
\]
and therefore
\[
 \operatorname{Hol}_a(g)\operatorname{Hol}_a(n)
 \operatorname{Hol}_a(g)^{-1}
   =\operatorname{Hol}_a(gng^{-1})\in\mathcal N_a.
\]
This proves normality. The identity
$\mathcal N_a=\{\Id\}$ is exactly the kernel inclusion in (iv).
\end{proof}

The factored action has the same partition formula for every
representative of a relative Lipschitz-homotopy class. In particular, if
$\widehat h\in\pi_1^{\mathrm{Lip}}(P,a)$ and the equivalent
conditions of Theorem~\ref{holo:flatness} hold, put
$\widehat L(\widehat h)=\inf_{\lambda\in\widehat h}\len\lambda$. Then
\begin{align}
 \max\{\Lip(\widehat{\operatorname{Hol}}_a(\widehat h)),
        \Lip(\widehat{\operatorname{Hol}}_a(\widehat h)^{-1})\}
 &\le g(\widehat L(\widehat h)),
       \label{holo:full-bound}\\
 \operatorname{disp}_a(\widehat{\operatorname{Hol}}_a(\widehat h))
 &\le K g(\widehat L(\widehat h))\widehat L(\widehat h)^p.
       \label{holo:full-disp}
\end{align}
Indeed, every representative gives the same map, and the pathwise
bounds apply to each representative and its reversal. Taking
infima and using continuity and monotonicity of $g$ proves these
inequalities.

\begin{remark}\label{holo:scope}
All contractions and homotopies in
Theorem~\ref{holo:flatness} are relative Lipschitz homotopies.
An arbitrary metric space need not permit replacement of a
continuous homotopy by a Lipschitz one. Consequently,
$\pi_1^{\mathrm{Lip}}(P,a)$ has not been identified with the
ordinary topological fundamental group. The knitting hypotheses
give the factorization criterion for the homotopies to which their
estimates apply. The basic three-point hypotheses already give
the thin representation \eqref{holo:hol}. No topology on either
loop group is needed for these algebraic homomorphisms.
\end{remark}

To verify this kernel condition from the increments, we compare boundary
transports across a Lipschitz square. A grid reduces the comparison to
a sum of local errors.

\section{Rectangular comparison and full homotopy descent}
\label{grid:section}

We retain the actual comparison error of each cell before imposing
scalar bounds. The key estimate uses only horizontal prefixes to
control how these errors accumulate. A three-point order greater than
two will then make the total error tend to zero.

Let $H:[0,1]^2\to P$ be a relative Lipschitz homotopy between
$\gamma_0,\gamma_1:a\to b$. The Euclidean metric is used on the square.
Write
\[
 L_H=\Lip(H),\qquad
 \ell_H=\sup_{0\le t\le1}\len(H(\cdot,t)),\qquad
 \Lambda_H=g(\ell_H).
\]
Every horizontal slice is $L_H$-Lipschitz on an interval of length one.
Consequently,
\begin{equation}\label{grid:slice-bound}
 0\le\ell_H\le L_H,\qquad 1\le\Lambda_H\le g(L_H)<\infty.
\end{equation}
A grid $G=(S,T)$ consists of two finite partitions
\[
 S=(0=s_0<\cdots<s_n=1),\qquad
 T=(0=t_0<\cdots<t_m=1).
\]
Its vertices and row products are
\begin{align*}
 z_{i,j}&=H(s_i,t_j),\\
 A_j(G)&=\mu_{z_{0,j}z_{1,j}}\cdots
             \mu_{z_{n-1,j}z_{n,j}}\in\mathcal C_{ab}
             \qquad(0\le j\le m).
\end{align*}
The fixed endpoints give $z_{0,j}=a$ and $z_{n,j}=b$ for every $j$.
Set
\[
 \mesh(G)=\max\{\mesh(S),\mesh(T)\}.
\]
For $1\le i\le n$ and $0\le j<m$, define
\begin{equation}\label{grid:cell-defect}
 \Box_{i,j}(G)=
 \dist_{z_{i-1,j},z_{i,j+1}}
 \left(
 \mu_{z_{i-1,j}z_{i,j}}\mu_{z_{i,j}z_{i,j+1}},
 \mu_{z_{i-1,j}z_{i-1,j+1}}\mu_{z_{i-1,j+1}z_{i,j+1}}
 \right).
\end{equation}
Both maps have source $M_{z_{i,j+1}}$ and target $M_{z_{i-1,j}}$.
The triangle inequality through $\mu_{z_{i-1,j}z_{i,j+1}}$, followed
by \eqref{eq:defect}, shows that each cell defect is finite. Denote their
total by
\[
 \Delta_H(G)=\sum_{j=0}^{m-1}\sum_{i=1}^n\Box_{i,j}(G).
\]

\begin{lemma}[Strip comparison]\label{grid:strip}
For every grid and every pair of adjacent rows,
\begin{equation}\label{grid:strip-estimate}
 \dist_{ab}(A_j(G),A_{j+1}(G))
 \le\Lambda_H\sum_{i=1}^n\Box_{i,j}(G).
\end{equation}
In particular,
\begin{equation}\label{grid:global-estimate}
 \dist_{ab}(A_0(G),A_m(G))\le\Lambda_H\Delta_H(G).
\end{equation}
\end{lemma}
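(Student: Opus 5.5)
The plan is a telescoping argument across one strip of the grid, exactly parallel to the hybrid products in Lemma~\ref{lem:maps} and Theorem~\ref{thm:tree}. Fix $j$ and write $x_i=z_{i,j}$, $y_i=z_{i,j+1}$, with $x_0=y_0=a$ and $x_n=y_n=b$. For $0\le k\le n$ define the mixed route
\[
 W_k=\mu_{x_0x_1}\cdots\mu_{x_{k-1}x_k}\,\mu_{x_ky_k}\,
      \mu_{y_ky_{k+1}}\cdots\mu_{y_{n-1}y_n}.
\]
This route runs along row $j$ up to column $k$, steps vertically, and then runs along row $j+1$.

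The endpoint cases recover the two row products. Since $x_n=y_n=b$, the diagonal identity \eqref{eq:diag} gives $\mu_{x_ny_n}=\Id$, so $W_n=A_j(G)$. Similarly $x_0=y_0=a$ gives $W_0=A_{j+1}(G)$. The relative boundary condition is used only here.

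Consecutive routes differ in a single cell. $W_k$ and $W_{k-1}$ share the left prefix $\mu_{x_0x_1}\cdots\mu_{x_{k-2}x_{k-1}}$ and the right suffix $\mu_{y_ky_{k+1}}\cdots\mu_{y_{n-1}y_n}$. In the middle, $W_k$ contains $\mu_{x_{k-1}x_k}\mu_{x_ky_k}$, while $W_{k-1}$ contains $\mu_{x_{k-1}y_{k-1}}\mu_{y_{k-1}y_k}$. These are precisely the two maps compared in $\Box_{k,j}(G)$.

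To bound each step, I will use \eqref{eq:side-composition} twice. The common right suffix does not increase the supremum distance. The common left prefix multiplies the distance by its Lipschitz constant. That prefix is a chain product along the horizontal slice $H(\cdot,t_j)$, so its chain length is at most $\len(H(\cdot,t_j))\le\ell_H$. Hence \eqref{eq:chain} and monotonicity of $g$ bound its Lipschitz constant by $\Lambda_H$. For $k=1$ the prefix is empty, and $1\le g(0)\le\Lambda_H$ still applies. This gives
\[
 \dist(W_k,W_{k-1})\le\Lambda_H\Box_{k,j}(G).
\]
The triangle inequality over $k=1,\dots,n$ then yields \eqref{grid:strip-estimate}.

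Summing \eqref{grid:strip-estimate} over $j=0,\dots,m-1$ and using the triangle inequality gives \eqref{grid:global-estimate}. The triangle inequality is legitimate in the extended metric provided the individual terms are finite. Finiteness of each cell defect was noted after \eqref{grid:cell-defect}, and the side-composition bounds never require Lipschitz control of the suffixes.

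The only delicate point is making sure that only horizontal prefixes appear, so that $\Lambda_H$ depends on $\ell_H$ and not on vertical lengths. This is why the prefix in $W_k$ is chosen to lie on the lower row $j$ and the vertical step is placed after it. The vertical and upper-row factors then always sit in the suffix, where they cost nothing.
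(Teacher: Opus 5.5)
Your proposal is correct and is essentially the paper's proof. Your mixed routes $W_k$ are exactly the paper's $B_k$, with $W_{k-1}=C_k$. The telescoping over $k$, the diagonal identity at the fixed endpoints $a,b$, the cost-free right suffix, and the horizontal-prefix bound $\Lip\le g(\len(H(\cdot,t_j)))\le\Lambda_H$ all match the paper step for step, and summing over $j$ gives the global estimate in the same way.
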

\begin{proof}
Fix $j$. For $0\le i\le n$, put
\[
 x_i=z_{i,j},\qquad y_i=z_{i,j+1},\qquad v_i=\mu_{x_i y_i},
\]
and, for $1\le i\le n$, put
\[
 h_i=\mu_{x_{i-1}x_i},\qquad k_i=\mu_{y_{i-1}y_i}.
\]
Thus $v_0=\Id_{M_a}$ and $v_n=\Id_{M_b}$. The prefix and suffix
\[
 P_{i-1}=h_1\cdots h_{i-1}:M_{x_{i-1}}\to M_a,\qquad
 Q_{i+1}=k_{i+1}\cdots k_n:M_b\to M_{y_i}
\]
have the indicated types, with identities for empty products. Define
\[
 B_i=P_{i-1}h_iv_iQ_{i+1},\qquad
 C_i=P_{i-1}v_{i-1}k_iQ_{i+1}.
\]
These are maps $M_b\to M_a$, and direct multiplication gives
\[
 B_n=A_j(G),\qquad C_1=A_{j+1}(G),\qquad
 C_i=B_{i-1}\quad(2\le i\le n).
\]
Consequently,
\begin{align*}
 \dist_{ab}(A_j(G),A_{j+1}(G))
 &\le\sum_{i=1}^n\dist_{ab}(B_i,C_i),\\
 \dist_{ab}(B_i,C_i)
 &\le\Lip(P_{i-1})
   \dist_{x_{i-1},b}(h_iv_iQ_{i+1},v_{i-1}k_iQ_{i+1})\\
 &\le\Lip(P_{i-1})
   \dist_{x_{i-1},y_i}(h_iv_i,v_{i-1}k_i).
\end{align*}
The last inequality uses nonexpansiveness of right composition in the supremum
metric. Its final distance is $\Box_{i,j}(G)$. Moreover,
\[
 \Lip(P_{i-1})
 \le g\left(\sum_{r=1}^{i-1}d_P(x_{r-1},x_r)\right)
 \le g(\len(H(\cdot,t_j)))\le\Lambda_H
\]
by \eqref{eq:chain}. Substitution proves \eqref{grid:strip-estimate}.
Summation over $0\le j<m$ proves \eqref{grid:global-estimate}.
\end{proof}

The strip calculation is the finite hybrid comparison underlying the
grid argument in \cite[Theorem~4.14, Step~2]{CM}. Its useful feature
here is that every Lipschitz constant belongs to a horizontal product of
the original increments. Vertical products do not accumulate in those
constants, and no inverse estimate is used.

\begin{theorem}[A sufficient defect-sum criterion]\label{grid:descent}
Set $L_i=\Lip(\gamma_i)$ and $\ell_i=\len(\gamma_i)$ for $i=0,1$.
Every grid satisfies
\begin{equation}\label{grid:boundary-error}
 \begin{split}
 \dist_{ab}(\Phi(\gamma_0),\Phi(\gamma_1))
 \le{}&K\left(L_0^p g(\ell_0)+L_1^p g(\ell_1)\right)
                  \mesh(S)^{p-1}\\
 &+\Lambda_H\Delta_H(G).
 \end{split}
\end{equation}
In particular, let $(G_\alpha)_\alpha$ be a net of grids with
\[
 \mesh(G_\alpha)\longrightarrow0,\qquad
 \Delta_H(G_\alpha)\longrightarrow0.
\]
Then $\Phi(\gamma_0)=\Phi(\gamma_1)$. No nesting assumption on the
grids is required.
\end{theorem}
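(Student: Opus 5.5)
The plan is a three-term triangle inequality through the bottom and top row products of the grid. Note first that $A_0(G)=P_{\gamma_0}(S)$ and $A_m(G)=P_{\gamma_1}(S)$, because $H(s_i,0)=\gamma_0(s_i)$ and $H(s_i,1)=\gamma_1(s_i)$. Hence
\[
 \dist_{ab}(\Phi(\gamma_0),\Phi(\gamma_1))
 \le\dist_{ab}(\Phi(\gamma_0),A_0(G))+\dist_{ab}(A_0(G),A_m(G))
   +\dist_{ab}(A_m(G),\Phi(\gamma_1)).
\]
The middle term is at most $\Lambda_H\Delta_H(G)$ by \eqref{grid:global-estimate} of Lemma~\ref{grid:strip}.

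For the outer terms I apply the rate estimate \eqref{eq:rate} of Lemma~\ref{lem:mesh-limit} with $s=0$, $t=1$ to the path $\gamma_i$ and the subdivision $S$. Since $\ell_{0,1}=\ell_i$, this gives
\[
 \dist(P_{\gamma_i}(S),\Phi(\gamma_i))\le KL_i^pg(\ell_i)\mesh(S)^{p-1}.
\]
Here $\Phi(\gamma_i)=\phi^{\gamma_i}_{0,1}$ by definition. Summing the three bounds yields \eqref{grid:boundary-error}. Only $\mesh(S)$ enters; the vertical partition $T$ affects only $\Delta_H$.

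For the net statement, $\Lambda_H$, $L_i$ and $\ell_i$ depend only on $H$, not on the grid, and $\mesh(S_\alpha)\le\mesh(G_\alpha)\to0$. So the right-hand side of \eqref{grid:boundary-error} along the net tends to $0$, because $p>1$. The left-hand side is a fixed element of $[0,\infty]$ bounded by quantities tending to zero, so it is zero. Separation of the extended supremum metric then gives $\Phi(\gamma_0)=\Phi(\gamma_1)$.

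No nesting is needed, because each bound holds grid by grid; in particular, \eqref{eq:rate} is uniform over all subdivisions, not only over refinements. The main point to check is that the distance from $A_0(G)$ to $\Phi(\gamma_0)$ is finite, so the triangle inequality is meaningful. This follows from \eqref{eq:delete}, and in any case \eqref{eq:rate} already supplies a finite bound. There is no further obstacle.
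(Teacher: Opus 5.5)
Your proposal is correct and follows the paper's proof essentially verbatim: you identify the bottom and top rows with $P_{\gamma_0}(S)$ and $P_{\gamma_1}(S)$, bound their distances to $\Phi(\gamma_i)$ by \eqref{eq:rate} on $[0,1]$, bound the middle term by \eqref{grid:global-estimate}, and conclude along the net by separation in $\mathcal C_{ab}$. Your observation that \eqref{eq:rate} holds for every subdivision, not only for refinements, is exactly why no nesting assumption is needed.
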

\begin{proof}
The bottom row is the partition product of $\gamma_0$ over $S$, and
the top row is that of $\gamma_1$ over $S$. The mesh estimate
\eqref{eq:rate}, on the whole parameter interval, gives
\begin{align*}
 \dist_{ab}(\Phi(\gamma_0),A_0(G))
 &\le K L_0^p g(\ell_0)\mesh(S)^{p-1},\\
 \dist_{ab}(A_m(G),\Phi(\gamma_1))
 &\le K L_1^p g(\ell_1)\mesh(S)^{p-1}.
\end{align*}
Add these inequalities to \eqref{grid:global-estimate}. Every term on
the right is finite, so the triangle inequality applies in the extended
metric and gives \eqref{grid:boundary-error}. Along the specified net
its right-hand side tends to zero. Separation in $\mathcal C_{ab}$
proves the asserted identity.
\end{proof}

The hypothesis of Theorem~\ref{grid:descent} is expressed entirely in
the original increments and a specified homotopy. It is a sufficient
estimate, not a converse characterization of descent. The following
criteria give concrete ways to verify it without first knowing the
boundary transports.

\begin{definition}\label{grid:partition-modulus}
For a nonnegative, finite-valued functional $\omega$ on nondegenerate
subintervals of $[0,1]$, define
\[
 V_\omega(\delta)=
 \sup\left\{\sum_{I\in\pi}\omega(I):
       \pi\text{ is a partition of }[0,1],\quad\mesh(\pi)\le\delta
       \right\},\qquad 0<\delta\le1.
\]
The supremum may initially be infinite. The functional is
partition-vanishing if $V_\omega(\delta)\to0$ as $\delta\downarrow0$.
\end{definition}

\begin{proposition}[Factorized cell bounds]\label{grid:factorized}
Suppose that $C_H<\infty$ and interval functionals $\omega_1,\omega_2$
satisfy, on every grid,
\begin{equation}\label{grid:factor-hyp}
 \Box_{i,j}(G)\le
 C_H\omega_1([s_{i-1},s_i])\omega_2([t_j,t_{j+1}]).
\end{equation}
Assume that their partition moduli are finite for sufficiently small
$\delta$ and that
\begin{equation}\label{grid:moduli-product}
 V_{\omega_1}(\delta)V_{\omega_2}(\delta)\longrightarrow0.
\end{equation}
Then $\Phi(\gamma_0)=\Phi(\gamma_1)$. Condition
\eqref{grid:moduli-product} holds if both functionals are
partition-vanishing. It also holds if one is partition-vanishing and
the other has bounded partition modulus near zero.
\end{proposition}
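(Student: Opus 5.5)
The plan is to reduce everything to Theorem~\ref{grid:descent}. It suffices to exhibit a sequence of grids $G_k$ with $\mesh(G_k)\to0$ and $\Delta_H(G_k)\to0$. I would take the simplest choice, namely grids whose two partitions $S_k$ and $T_k$ both have mesh at most $\delta_k$, with $\delta_k\downarrow0$. For example, use uniform partitions $S_k=T_k=(j/k)_{j=0}^k$. Any partitions with mesh $\le\delta_k$ would do, since the moduli are suprema over all such partitions.

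For such a grid, the hypothesis \eqref{grid:factor-hyp} bounds each cell defect by a product. The double sum defining $\Delta_H$ then factorizes:
\[
 \Delta_H(G_k)\le C_H\Bigl(\sum_{i=1}^n\omega_1([s_{i-1},s_i])\Bigr)
 \Bigl(\sum_{j=0}^{m-1}\omega_2([t_j,t_{j+1}])\Bigr)
 \le C_H V_{\omega_1}(\delta_k)V_{\omega_2}(\delta_k).
\]
This uses nonnegativity of the $\omega$'s and the definition of $V_\omega$ as a supremum over partitions of mesh at most $\delta_k$. For $k$ large, both moduli are finite by assumption, so the bound is a genuine real number. By \eqref{grid:moduli-product} it tends to zero. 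Since $\mesh(G_k)\le\delta_k\to0$, Theorem~\ref{grid:descent} then gives $\Phi(\gamma_0)=\Phi(\gamma_1)$.

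For the sufficient conditions, first note that $V_\omega$ is nondecreasing in $\delta$, because the admissible set of partitions grows with $\delta$. Hence if $V_{\omega_2}$ is bounded near zero by some $B$ and $V_{\omega_1}(\delta)\to0$, the product is at most $B\,V_{\omega_1}(\delta)\to0$. If both moduli are partition-vanishing, each is eventually finite and bounded, so the first case applies.

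The only point needing care, which I regard as the main (mild) obstacle, is the handling of possibly infinite moduli. One must restrict to $\delta$ small enough that both $V_{\omega_i}(\delta)<\infty$, so that the product bound is meaningful and the limit \eqref{grid:moduli-product} is interpreted for such $\delta$. Beyond that, the proof is a direct factorization followed by an appeal to the defect-sum criterion.
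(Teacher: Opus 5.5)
Your proposal is correct and follows essentially the same route as the paper: bound each cell defect by the product hypothesis, factor the double sum into $C_H V_{\omega_1}(\delta)V_{\omega_2}(\delta)$ for grids of mesh at most $\delta$, and apply Theorem~\ref{grid:descent}. The only differences are cosmetic. You fix a concrete sequence of grids where the paper allows any mesh-null net, and you spell out the monotonicity of $V_\omega$ to justify the two sufficient conditions, which the paper calls elementary product limits.
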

\begin{proof}
Nonnegativity and finite distributivity give
\begin{align*}
 \Delta_H(G)
 &\le C_H
  \left(\sum_{i=1}^n\omega_1([s_{i-1},s_i])\right)
  \left(\sum_{j=0}^{m-1}\omega_2([t_j,t_{j+1}])\right)\\
 &\le C_H V_{\omega_1}(\mesh(G))V_{\omega_2}(\mesh(G)).
\end{align*}
The right-hand side tends to zero on any net of grids with mesh
tending to zero. Theorem~\ref{grid:descent} applies. The two final
statements are the corresponding elementary product limits.
\end{proof}

For explicit examples, let $q\ge1$ and let
$\psi:(0,1]\to[0,\infty)$ be finite-valued and nondecreasing. The
functional $\omega(I)=|I|^q\psi(|I|)$ satisfies
\begin{equation}\label{grid:power-modulus}
 \sum_{I\in\pi}|I|^q\psi(|I|)
 \le\delta^{q-1}\psi(\delta)\sum_{I\in\pi}|I|
 =\delta^{q-1}\psi(\delta)
 \quad(\mesh(\pi)\le\delta).
\end{equation}
Thus $|I|^q$ is partition-vanishing for $q>1$. For $q=1$, the same
conclusion holds whenever $\psi(\delta)\to0$. In particular,
\[
 \omega_1(I)=|I|,\qquad
 \omega_2(I)=|I|\bigl(\log(e/|I|)\bigr)^{-2}
\]
satisfy
\[
 V_{\omega_1}(\delta)=1,\qquad
 V_{\omega_2}(\delta)\le\bigl(\log(e/\delta)\bigr)^{-2}\longrightarrow0.
\]
A cell bound of the form
\[
 \Box_{i,j}(G)\le C_H(s_i-s_{i-1})(t_{j+1}-t_j)
       \bigl(\log(e/(t_{j+1}-t_j))\bigr)^{-2}
\]
therefore suffices for descent. The logarithmic gain occurs in only one
coordinate. No power improvement in both coordinates is required.

\begin{proposition}[Regular-square criterion]\label{grid:regular}
Let $G_n$ be the grid with both partitions $(k/n)_{k=0}^n$.
Suppose that $\vartheta_H:(0,1]\to[0,\infty)$ satisfies
\[
 \Box_{i,j}(G_n)\le\vartheta_H(1/n),\qquad
 n^2\vartheta_H(1/n)\longrightarrow0.
\]
Then $\Phi(\gamma_0)=\Phi(\gamma_1)$.
\end{proposition}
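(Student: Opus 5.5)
The plan is to apply Theorem~\ref{grid:descent} along the sequence of regular grids $G_n$. The grid $G_n$ has $\mesh(G_n)=1/n\to0$. It also has exactly $n\cdot n=n^2$ cells $\Box_{i,j}(G_n)$, with $1\le i\le n$ and $0\le j<n$. The hypothesis bounds every cell uniformly, so the total defect satisfies
\[
 \Delta_H(G_n)=\sum_{j=0}^{n-1}\sum_{i=1}^n\Box_{i,j}(G_n)
 \le n^2\vartheta_H(1/n)\longrightarrow0 .
\]

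A sequence is a net, so the two hypotheses of Theorem~\ref{grid:descent} are satisfied. Concretely, \eqref{grid:boundary-error} gives
\[
 \dist_{ab}(\Phi(\gamma_0),\Phi(\gamma_1))
 \le K\bigl(L_0^pg(\ell_0)+L_1^pg(\ell_1)\bigr)n^{1-p}+\Lambda_H n^2\vartheta_H(1/n).
\]
Here $\Lambda_H<\infty$ by \eqref{grid:slice-bound}, and the first term tends to zero because $p>1$. Letting $n\to\infty$ makes the right-hand side tend to zero. Separation in the extended supremum metric of $\mathcal C_{ab}$ then gives $\Phi(\gamma_0)=\Phi(\gamma_1)$.

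There is no genuine obstacle. The only points to check are that the cell count is exactly $n^2$ (the index ranges are fixed by \eqref{grid:cell-defect}) and that $\vartheta_H$ is finite-valued, so each bound is a finite real number. One could instead phrase the result as a special case of Proposition~\ref{grid:factorized}, but that would need a factorized cell bound valid on all grids. The direct route through Theorem~\ref{grid:descent} avoids this, since that theorem allows any net of grids and no nesting.
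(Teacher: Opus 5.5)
Your proposal is correct and follows the paper's proof: you count the $n^2$ cells of $G_n$, bound $\Delta_H(G_n)\le n^2\vartheta_H(1/n)\to0$ while $\mesh(G_n)=1/n\to0$, and invoke Theorem~\ref{grid:descent}. Writing out \eqref{grid:boundary-error} explicitly with $\mesh(S)^{p-1}=n^{1-p}$ only expands that same step.
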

\begin{proof}
There are $n^2$ cells and $\mesh(G_n)=1/n$. Hence
\[
 0\le\Delta_H(G_n)
 \le n^2\vartheta_H(1/n)\longrightarrow0.
\]
Theorem~\ref{grid:descent} gives the result.
\end{proof}

The regular criterion allows anisotropic exponents that need not make
either interval functional partition-vanishing. For example, a bound
\[
 \Box_{i,j}(G)\le C_H(s_i-s_{i-1})^u(t_{j+1}-t_j)^v,
 \qquad u,v>0,\qquad u+v>2,
\]
gives $\Delta_H(G_n)\le C_Hn^{2-u-v}\to0$. Thus full descent holds
even when $u\le1$ or $v\le1$. The use of a regular sequence fixes the
relative mesh scales and avoids any assertion about arbitrary aspect
ratios.

\begin{theorem}[Superquadratic three-point descent]\label{grid:superquadratic}
If the approximate-action data satisfy $p=1+\eps>2$, sewn transport
is invariant under every relative Lipschitz homotopy. It therefore
defines an action of the full Lipschitz homotopy groupoid on $(M_x)$
by bi-Lipschitz maps. For each representative,
\[
 \Lip(\Phi(\gamma))\le g(\len(\gamma)),\qquad
 \Lip(\Phi(\gamma)^{-1})\le g(\len(\gamma)).
\]
\end{theorem}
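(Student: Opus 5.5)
The plan is to verify the hypothesis of the Regular-square criterion (Proposition~\ref{grid:regular}) for an arbitrary relative Lipschitz homotopy $H$. Once transport is invariant, the action and the Lipschitz bounds follow formally from Theorem~\ref{holo:flatness} and the pathwise estimates.

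\emph{Cell bound.} Fix $H$ with $L=L_H$ and the regular grid $G_n$. In each cell with corners $x=z_{i-1,j}$, $y=z_{i,j}$, $w=z_{i-1,j+1}$, $z=z_{i,j+1}$, I compare both two-edge routes with the common direct increment $\mu_{xz}$. By \eqref{eq:defect},
\[
 \dist(\mu_{xy}\mu_{yz},\mu_{xz})\le\sum_r C_r d_P(z,y)^{a_r}d_P(y,x)^{b_r}\le C(L/n)^p ,
\]
since every edge of the cell has Euclidean length $1/n$ in the square, so both distances are at most $L/n$. The same bound holds for the route through $w$. Hence
\[
 \Box_{i,j}(G_n)\le 2C L^p n^{-p}=:\vartheta_H(1/n).
\]
Both routes are at finite distance from $\mu_{xz}$, so the triangle inequality is valid in the extended metric.

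\emph{Summation.} Then $n^2\vartheta_H(1/n)=2CL^pn^{2-p}\to0$ precisely because $p>2$. Proposition~\ref{grid:regular} gives $\Phi(\gamma_0)=\Phi(\gamma_1)$ for every relative Lipschitz homotopy. This is condition (ii) of Theorem~\ref{holo:flatness}, so (i) yields a unique action $\widehat\varphi$ of $\mathcal G_P^{\mathrm{Lip}}$ with $\widehat\varphi_{[\gamma]_{\mathrm{Lip}}}=\Phi(\gamma)$. By Theorem~\ref{thm:main}, these maps are bijections with Lipschitz inverses.

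\emph{Lipschitz bounds.} For a representative $\gamma$, \eqref{eq:path-lip} at $s=0$, $t=1$ gives $\Lip(\Phi(\gamma))\le g(\len\gamma)$. The inverse is $\Phi(\bar\gamma)$ by Proposition~\ref{prop:path-algebra}, and $\len\bar\gamma=\len\gamma$, so the same bound holds for it.

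The only real point is the cell estimate. We must avoid using the four-point structure and instead pass through the diagonal increment, which costs only a factor of two. We must also check that the Euclidean side length $1/n$ controls all four edge distances. Nothing else needs the threshold $p>2$, and the sharpness examples later show that it cannot be lowered.
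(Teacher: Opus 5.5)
Your proposal is correct and follows the paper's proof essentially step for step: the paper also bounds each cell of $G_n$ by comparing both two-edge routes with the common diagonal increment to get $2CL_H^pn^{-p}$, sums this to $2CL_H^pn^{2-p}\to0$, applies Proposition~\ref{grid:regular}, and obtains the Lipschitz bounds from \eqref{eq:path-lip} together with $\len(\bar\gamma)=\len(\gamma)$. The only cosmetic difference is that you obtain the action through the implication (ii)$\Rightarrow$(i) of Theorem~\ref{holo:flatness}, whereas the paper passes the identities of Proposition~\ref{prop:path-algebra} to homotopy classes directly; the two routes amount to the same argument.
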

\begin{proof}
Consider a cell of $G_n$, with corners
\[
 x=H(s_{i-1},t_j),\quad u=H(s_i,t_j),\quad
 v=H(s_{i-1},t_{j+1}),\quad y=H(s_i,t_{j+1}).
\]
Each of the pairs $(x,u),(u,y),(x,v),(v,y)$ comes from an edge of
Euclidean length $1/n$. All four image distances are at most $L_H/n$.
Comparison of the two routes through their common direct increment
gives
\begin{align*}
 \Box_{i,j}(G_n)
 &\le\dist_{xy}(\mu_{xu}\mu_{uy},\mu_{xy})
       +\dist_{xy}(\mu_{xy},\mu_{xv}\mu_{vy})\\
 &\le\sum_{r=1}^N C_r
  \left(d_P(y,u)^{a_r}d_P(u,x)^{b_r}
       +d_P(y,v)^{a_r}d_P(v,x)^{b_r}\right)\\
 &\le2C L_H^p n^{-p}.
\end{align*}
Consequently,
\begin{equation}\label{grid:superquadratic-rate}
 \Delta_H(G_n)\le2C L_H^p n^{2-p}\longrightarrow0.
\end{equation}
Proposition~\ref{grid:regular} proves equality of the boundary
transports. The concatenation, identity, and reversal formulas in
Proposition~\ref{prop:path-algebra} therefore pass to relative
Lipschitz homotopy classes. The resulting maps are bijections, with
inverse represented by the reversed path. The bounds follow from
\eqref{eq:path-lip} and $\len(\bar\gamma)=\len(\gamma)$.
\end{proof}

The exponent restriction in this theorem enters only in
$n^{2-p}\to0$. At $p=2$ the argument yields a bounded total defect,
which does not imply vanishing. The area example below supplies the
corresponding obstruction and establishes sharpness of the threshold.
The thin theorem itself continues to use the full range $p>1$.

Curry and Manchon's four-point estimate supplies such grid control. We
apply it next, retaining $f,g$ and isolating the linear bound needed
for exponential estimates.

\section{The knitting theorem and its quantitative normalization}
\label{sec:knitting}

The hypothesis of Curry--Manchon's knitting theorem
\cite[Theorem~4.14]{CM} strengthens the defect estimate while retaining
the functions $f$ and $g$ of Definition~\ref{def:approx}. We first give
the conclusion with these original controls. A linear Lipschitz
assumption will be stated separately when an exponential bound is used.

\begin{definition}\label{def:strong-knit}
A strong four-point family satisfies \eqref{eq:diag}, \eqref{eq:f},
and \eqref{eq:chain}. In place of \eqref{eq:defect}, it satisfies
\begin{align}
 \dist_{xy}(\mu_{xu}\mu_{uy},\mu_{xv}\mu_{vy})
 &\le (1+A d_P(u,x))
       \sum_{r=1}^{N}C_r d_P(y,v)^{a_r}d_P(u,v)^{b_r}
       \notag\\
 &\quad+\sum_{r=1}^{N}C_r d_P(x,u)^{b_r}d_P(u,v)^{a_r}
 \label{eq:strong-knit}
\end{align}
for all $x,u,v,y\in P$, where $A\ge0$, $C_r,a_r,b_r>0$, and
$a_r+b_r=2+\kappa$ for a fixed $\kappa>0$. In this section put
\[
 p=2+\kappa,\qquad C=\sum_{r=1}^{N}C_r,\qquad
 K_p=2^p C\zeta(p).
\]
\end{definition}

The specialization $u=z$, $v=y$ in \eqref{eq:strong-knit} gives
\begin{equation}\label{eq:strong-three-knit}
 \dist_{xy}(\mu_{xy},\mu_{xz}\mu_{zy})
 \le\sum_{r=1}^{N}C_r d_P(y,z)^{a_r}d_P(z,x)^{b_r}.
\end{equation}
Indeed, $d_P(y,v)=0$ annihilates the first sum, and
$\mu_{xv}\mu_{vy}=\mu_{xy}\mu_{yy}=\mu_{xy}$.
Thus the family is an approximate action with sewing gain
$p-1=1+\kappa$. All preceding pathwise and thin-groupoid results
apply with that gain. The higher order also gives full homotopy
descent by Theorem~\ref{grid:superquadratic}. The proof below retains
the four-point estimate to display
the direct knitting bound.

\begin{theorem}[Knitting with general controls]\label{thm:knitting}
For a strong four-point family, the formula
\[
 \varphi^{\mathrm{Lip}}_{[\gamma]}=\Phi(\gamma)
\]
defines an action of the relative Lipschitz-homotopy groupoid
$\mathcal G_P^{\mathrm{Lip}}$ by bi-Lipschitz maps.
For a Lipschitz path $\gamma$, write $L_\gamma=\Lip(\gamma)$ and
$\ell_{s,t}=\len(\gamma|_{[s\wedge t,s\vee t]})$. Then
\begin{align}
 \dist_{\gamma(s)\gamma(t)}
  (\varphi^{\mathrm{Lip}}_{[\gamma_{s,t}]},\mu_{\gamma(s)\gamma(t)})
 &\le K_p g(\ell_{s,t})\ell_{s,t}^p
 \le K_p L_\gamma^p g(\ell_{s,t})|t-s|^p,
 \label{eq:knit-local}\\
 \Lip(\varphi^{\mathrm{Lip}}_{[\gamma_{s,t}]})
 &\le g(\ell_{s,t}).\label{eq:knit-lip}
\end{align}
The action is the limit of the original products along every
representative as the partition mesh tends to zero. It is unique
among actions $\Psi$ for which every Lipschitz path $\gamma$ admits
a finite $B_\gamma$ such that
\begin{equation}\label{eq:knit-unique}
 \dist_{\gamma(s)\gamma(t)}
   (\Psi_{[\gamma_{s,t}]},\mu_{\gamma(s)\gamma(t)})
 \le B_\gamma |t-s|^{1+\kappa}
 \qquad(s,t\in[0,1]).
\end{equation}
\end{theorem}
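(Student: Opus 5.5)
The plan is to reduce everything to results already proved, using the specialization \eqref{eq:strong-three-knit}, and to use the four-point estimate directly only for homotopy descent.

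First, \eqref{eq:strong-three-knit} shows that a strong four-point family is an approximate action in the sense of Definition~\ref{def:approx}. Its three-point exponents satisfy $a_r+b_r=p=2+\kappa$, and its constants $C$ and $K_p=2^pC\zeta(p)$ coincide with those of \eqref{eq:constants}. Theorem~\ref{thm:main} then yields the thin action, the partition limit \eqref{eq:main-limit}, and the local bound \eqref{eq:main-local}. That bound is exactly \eqref{eq:knit-local} once descent is known. The Lipschitz bound \eqref{eq:knit-lip} is \eqref{eq:path-lip} together with the affine identity in Proposition~\ref{prop:path-algebra}.

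Second, I prove invariance under relative Lipschitz homotopy. Since $p>2$, Theorem~\ref{grid:superquadratic} already gives this. To display the knitting bound, however, I would estimate the cell defects directly from \eqref{eq:strong-knit}. For a cell of $G_n$ with corners $x,u,v,y$ as in that proof, every pair distance among $x,u,v,y$ is at most $2L_H/n$. The right side of \eqref{eq:strong-knit} is then bounded by
\[
 \bigl(2+2AL_H\bigr)C\,(2L_H/n)^{p}.
\]
Summing over the $n^2$ cells gives $\Delta_H(G_n)=O(n^{2-p})=O(n^{-\kappa})$, and Proposition~\ref{grid:regular} gives $\Phi(\gamma_0)=\Phi(\gamma_1)$.

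One point needs care here. The cell defect \eqref{grid:cell-defect} compares $\mu_{xu}\mu_{uy}$ with $\mu_{xv}\mu_{vy}$, which has exactly the form of the left side of \eqref{eq:strong-knit}. The substitution can therefore be made directly, with no intermediate increment. Condition (ii) of Theorem~\ref{holo:flatness} then gives the factored action $\varphi^{\mathrm{Lip}}$ on $\mathcal G_P^{\mathrm{Lip}}$. Bijectivity and the bi-Lipschitz property come from $\varphi_{h^{-1}}=\varphi_h^{-1}$ and \eqref{eq:main-lip}.

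Third, uniqueness. Let $\Psi$ be an action of $\mathcal G_P^{\mathrm{Lip}}$ satisfying \eqref{eq:knit-unique}. Composing with the functor $q$ of Proposition~\ref{holo:quotient} gives an action $\Psi\circ q$ of $\Gthin$. Because $q([\gamma_{s,t}])=[\gamma_{s,t}]_{\mathrm{Lip}}$, this composite satisfies \eqref{eq:unique-hyp} with $q_\gamma=1+\kappa>1$. The uniqueness clause of Theorem~\ref{thm:main} then gives $\Psi\circ q=\varphi$. Since $q$ is surjective, $\Psi=\varphi^{\mathrm{Lip}}$.

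Conversely, the constructed action satisfies \eqref{eq:knit-unique}, with $B_\gamma=K_pL_\gamma^pg(\len\gamma)$. This uses $|t-s|^p\le|t-s|^{1+\kappa}$ on $[0,1]$ and the fact that $g$ is nondecreasing.

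The main obstacle is only bookkeeping. One must check that the specialization \eqref{eq:strong-three-knit} really yields the constants needed in \eqref{eq:knit-local}, and that the uniqueness exponent $1+\kappa$ exceeds one. Both points are routine.
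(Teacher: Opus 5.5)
Your proposal is correct and follows the paper's proof in substance. The three-point specialization feeds the pathwise and thin-groupoid estimates, the four-point bound is inserted directly into the regular-grid cell defects to give $O(n^{2-p})=O(n^{-\kappa})$, and uniqueness comes from the hybrid comparison with $\mu$ prefixes. The differences are only in packaging: you cite Proposition~\ref{grid:regular} instead of writing out the strip and boundary estimates, and you get uniqueness by composing $\Psi$ with the surjective functor $q$ and applying the uniqueness clause of Theorem~\ref{thm:main}, whereas the paper reruns that hybrid argument directly.
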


\begin{proof}
Pullback of \eqref{eq:strong-three-knit} along $\gamma$ gives
\[
 \dist(\mu_{\gamma(s)\gamma(t)},
       \mu_{\gamma(s)\gamma(u)}\mu_{\gamma(u)\gamma(t)})
 \le\sum_{r=1}^{N}C_rL_\gamma^p|t-u|^{a_r}|u-s|^{b_r}.
\]
Proposition~\ref{prop:path} therefore gives the partition limits,
\eqref{eq:knit-local}, and \eqref{eq:knit-lip}. The relevant exponent
in the interval theorem is $p$, and its constants are
$C_r^\gamma=C_rL_\gamma^p$. Moreover, \eqref{eq:knit-local} implies
\eqref{eq:knit-unique} for the constructed maps, since
$|t-s|^p\le|t-s|^{1+\kappa}$ and
$g(\ell_{s,t})\le g(\len\gamma)$.

Let $H:[0,1]^2\to P$ be a relative Lipschitz homotopy from
$\gamma_0$ to $\gamma_1$, and let $\ell=\Lip(H)$ for the
Euclidean metric on the square. For $n\ge1$ put
\[
 x_i^j=H(i/n,j/n),\qquad
 R_j=\mu_{x_0^jx_1^j}\cdots\mu_{x_{n-1}^jx_n^j}
 \quad(0\le j\le n).
\]
The relative boundary condition gives $x_0^j=a$, $x_n^j=b$.
For $1\le i\le n$, $0\le j<n$, define
\[
 D_{i,j}=\dist_{x_{i-1}^jx_i^{j+1}}
 \bigl(\mu_{x_{i-1}^jx_i^j}\mu_{x_i^jx_i^{j+1}},
       \mu_{x_{i-1}^jx_{i-1}^{j+1}}
         \mu_{x_{i-1}^{j+1}x_i^{j+1}}\bigr).
\]
Every horizontal row has length at most $\ell$. The strip comparison
of Lemma~\ref{grid:strip}, with its horizontal-prefix bound
$g(\ell)$, gives
\begin{equation}\label{eq:knit-strip}
 \dist_{ab}(R_j,R_{j+1})
 \le g(\ell)\sum_{i=1}^{n}D_{i,j},\qquad
 \dist_{ab}(R_0,R_n)
 \le g(\ell)\sum_{j=0}^{n-1}\sum_{i=1}^{n}D_{i,j}.
\end{equation}

Apply \eqref{eq:strong-knit} with
\[
 x=x_{i-1}^j,\quad u=x_i^j,\quad
 v=x_{i-1}^{j+1},\quad y=x_i^{j+1}.
\]
The Lipschitz estimates are
\[
 d_P(x,u)\le\frac{\ell}{n},\qquad
 d_P(y,v)\le\frac{\ell}{n},\qquad
 d_P(u,v)\le\frac{2\ell}{n}.
\]
Consequently, with
\[
 C_H=\ell^p\left((1+A\ell)
                 \sum_{r=1}^{N}C_r2^{b_r}
                 +\sum_{r=1}^{N}C_r2^{a_r}\right),
\]
we have $D_{i,j}\le C_Hn^{-p}$. There are $n^2$ cells, so
\begin{equation}\label{eq:knit-grid-rate}
 \dist_{ab}(R_0,R_n)
 \le g(\ell)C_Hn^{2-p}
 =g(\ell)C_Hn^{-\kappa}\longrightarrow0.
\end{equation}
The two boundary sewing estimates and the triangle inequality give
\[
 \dist_{ab}(\Phi(\gamma_0),\Phi(\gamma_1))
 \le 2K_p g(\ell)\ell^p n^{1-p}
       +g(\ell)C_Hn^{2-p}.
\]
Both terms tend to zero. All displayed comparison distances are
finite, and separation gives $\Phi(\gamma_0)=\Phi(\gamma_1)$.

Hence $\varphi^{\mathrm{Lip}}$ is well defined. Path concatenation,
constant paths, and reversal give its product, unit, and inverse
identities by Proposition~\ref{prop:path-algebra}. The bounds
\eqref{eq:knit-lip} for a path and its reversal show that every
action map is a Lipschitz homeomorphism.

For uniqueness, let $\Psi$ satisfy \eqref{eq:knit-unique}. Fix
$\gamma$ and a partition $D=(0=t_0<\cdots<t_m=1)$. Its affine
subpaths concatenate to a reparameterization of $\gamma$, so
\[
 \Psi_{[\gamma]}=
 \prod_{j=1}^{m}\Psi_{[\gamma_{t_{j-1},t_j}]}.
\]
The product comparison \eqref{eq:hybrid}, with the $\mu$ factors
as the left prefixes, yields
\begin{align*}
 \dist(\Psi_{[\gamma]},P_\gamma(D))
 &\le g(\len\gamma)B_\gamma
              \sum_{j=1}^{m}(t_j-t_{j-1})^{1+\kappa}\\
 &\le g(\len\gamma)B_\gamma\mesh(D)^\kappa.
\end{align*}
The right-hand side tends to zero. Since
$P_\gamma(D)\to\Phi(\gamma)$, separation of the extended metric
gives $\Psi_{[\gamma]}=\Phi(\gamma)$ for every representative.
No Lipschitz bound on the competing maps is used.
\end{proof}

\begin{corollary}[Linear control]\label{cor:knit-linear}
In addition to Definition~\ref{def:strong-knit}, suppose that
\begin{equation}\label{eq:knit-linear}
 \Lip(\mu_{xy})\le1+B d_P(x,y)\qquad(x,y\in P)
\end{equation}
for some $B\ge0$. The conclusions of Theorem~\ref{thm:knitting}
hold with $g(r)=e^{Br}$. In particular,
\begin{align}
 \Lip(\varphi^{\mathrm{Lip}}_{[\gamma_{s,t}]})
 &\le e^{B\ell_{s,t}}
 \le e^{BL_\gamma|t-s|},\label{eq:knit-exponential}\\
 \dist(\varphi^{\mathrm{Lip}}_{[\gamma_{s,t}]},
               \mu_{\gamma(s)\gamma(t)})
 &\le 2^{2+\kappa}C\zeta(2+\kappa)
       L_\gamma^{2+\kappa}e^{B\ell_{s,t}}|t-s|^{2+\kappa}.
 \label{eq:knit-linear-local}
\end{align}
\end{corollary}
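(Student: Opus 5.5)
The plan is to show that $g(r)=e^{Br}$ is an admissible chain control in the sense of \eqref{eq:chain}. The corollary then follows by substituting this $g$ into Theorem~\ref{thm:knitting}.

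First I verify \eqref{eq:chain}. For a finite chain $x_0,\ldots,x_n$, submultiplicativity of Lipschitz constants under composition and \eqref{eq:knit-linear} give
\[
 \Lip(\mu_{x_0x_1}\cdots\mu_{x_{n-1}x_n})
 \le\prod_{j=1}^n\bigl(1+Bd_P(x_{j-1},x_j)\bigr)
 \le\exp\Bigl(B\sum_{j=1}^nd_P(x_{j-1},x_j)\Bigr),
\]
using $1+r\le e^r$. The empty product is covered by $g(0)=1$. The function $g(r)=e^{Br}$ is continuous and nondecreasing with values in $[1,\infty)$, as Definition~\ref{def:approx} requires. Condition \eqref{eq:f} holds with $f(r)=Br$, which is continuous and vanishes at zero. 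The remaining hypotheses, \eqref{eq:diag} and \eqref{eq:strong-knit}, are unchanged. The family is therefore a strong four-point family with this $g$, and I replace the original $g$ by $e^{Br}$ throughout.

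Next I apply Theorem~\ref{thm:knitting} to this family. Estimate \eqref{eq:knit-lip} reads $\Lip(\varphi^{\mathrm{Lip}}_{[\gamma_{s,t}]})\le e^{B\ell_{s,t}}$. The second inequality in \eqref{eq:knit-exponential} follows from $\ell_{s,t}\le L_\gamma|t-s|$, which is \eqref{eq:subpath-geometry}, together with monotonicity of the exponential. For \eqref{eq:knit-linear-local}, I take the second bound in \eqref{eq:knit-local} with $K_p=2^pC\zeta(p)$ and $p=2+\kappa$, and insert $g(\ell_{s,t})=e^{B\ell_{s,t}}$.

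No step presents a real obstacle; the only point needing care is that the replacement control is legitimate. The action and its partition-limit description do not depend on which admissible $g$ is chosen, since the transports are defined as limits of products of the original increments. Changing $g$ therefore changes only the constants in the estimates, not the maps themselves.
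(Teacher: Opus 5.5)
Your proposal is correct and follows essentially the same route as the paper: you bound chain products by $\prod_j(1+Bd_P(x_{j-1},x_j))\le\exp\bigl(B\sum_j d_P(x_{j-1},x_j)\bigr)$, so that $e^{Br}$ is an admissible chain control, and then substitute it with $p=2+\kappa$ into the estimates of Theorem~\ref{thm:knitting}. Your added remark is also sound: the transports are limits of products of the original increments, so changing $g$ changes only the constants and not the maps.
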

\begin{proof}
For every finite chain,
\[
 \Lip(\mu_{x_0x_1}\cdots\mu_{x_{m-1}x_m})
 \le\prod_{j=1}^{m}(1+B d_P(x_{j-1},x_j))
 \le\exp\left(B\sum_{j=1}^{m}d_P(x_{j-1},x_j)\right).
\]
Thus the chain control can be replaced by $e^{Br}$ in every preceding
estimate. Substitution of $p=2+\kappa$ proves both assertions.
\end{proof}

The constants $A$ and $B$ need not coincide. Replacing each by
$\max\{A,B\}$ gives a common constant without changing the class
having both bounds. The exponential conclusion uses
\eqref{eq:knit-linear}, whereas qualitative homotopy descent uses
only the original strong hypotheses. The factor
$L_\gamma^{2+\kappa}$ in \eqref{eq:knit-linear-local} records
pullback scaling. The power $2^{2+\kappa}$ and
$\zeta(2+\kappa)$ are the constants supplied by the interval
deletion estimate at total order $2+\kappa$. They provide a valid
normalization of the local bound in \cite[equation~(27)]{CM}.

\begin{proposition}\label{prop:knit-nonlinear}
The hypotheses of Definition~\ref{def:strong-knit}, with general
$f$ and $g$, do not imply the exponential bound
\eqref{eq:knit-exponential} for any finite constant $B$.
\end{proposition}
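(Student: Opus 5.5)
We will produce a counterexample: a strong four-point family with admissible continuous $f$ and $g$ for which, for every finite $B$, some path $\gamma$ violates $\Lip(\varphi^{\mathrm{Lip}}_{[\gamma]})\le e^{B\len(\gamma)}$. The plan is to make every increment an exact action, so that all defects vanish and \eqref{eq:strong-knit} holds trivially with any $C_r,a_r,b_r$. Then the transport along any path equals the corresponding increment itself, and its Lipschitz constant is not controlled exponentially in length.

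Concretely, take $P=\R$ with its usual metric and $M_x=\R$ for all $x$. Define
\[
 \mu_{xy}(v)=e^{\theta(x)-\theta(y)}v,
\]
with $\theta:\R\to\R$ continuous. Then $\mu_{xz}\mu_{zy}=\mu_{xy}$ exactly and $\mu_{xx}=\Id$. Hence both sides of \eqref{eq:strong-knit} compare equal maps $\mu_{xy}$, and every defect is zero. Chain products telescope to $\mu_{x_0x_n}$. So \eqref{eq:chain} reduces to
\[
 \sup\{e^{|\theta(x)-\theta(y)|}:|x-y|\le r\}\le g(r),
\]
and \eqref{eq:f} reduces to the corresponding bound with $f$. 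Choose $\theta(x)=x^2$, a superlinear function whose oscillation over windows of length $r$ is unbounded in the position. That breaks even the existence of a finite $g(r)$. So instead we must bound the oscillation uniformly in position while still beating every exponential at scale $r$.

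We therefore take $\theta(x)=x^2$ on a bounded parameter space, say $P=[0,R]$, or better $\theta(x)=h(x)$ on $P=[0,\infty)$ with $h$ superlinear. In the bounded case $g(r)=\exp(\sup_{|x-y|\le r}|\theta(x)-\theta(y)|)$ is finite, continuous and nondecreasing, and $f(r)=g(r)-1$ is continuous with $f(0)=0$. To defeat every $B$ with one family, we use $P=[0,\infty)$ and the oscillation modulus
\[
 \Omega(r)=\sup_{|x-y|\le r}|\theta(x)-\theta(y)|.
\]
We need $\Omega(r)<\infty$ for each $r$ and $\Omega(r)/r\to\infty$ as $r\to\infty$, or at some finite scale exceeding every linear rate. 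A position-uniform modulus with $\Omega(r)/r\to\infty$ is impossible for functions on $\R$, since $\Omega$ is subadditive. The main obstacle is exactly this subadditivity. We resolve it by using a bounded metric space so that large $r$ does not occur, and by violating the bound at small scales instead.

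Take $P=[0,1]$ and $\theta(x)=\sqrt{x}$, which is continuous but not Lipschitz. Then $\Omega(r)=\sqrt r$, so $g(r)=e^{\sqrt r}$ and $f(r)=e^{\sqrt r}-1$ are admissible continuous controls with $g\ge1$ nondecreasing. For the path $\gamma(t)=\delta t$ from $0$ to $\delta$, the transport is $\varphi^{\mathrm{Lip}}_{[\gamma]}=\mu_{0\delta}$, multiplication by $e^{-\sqrt\delta}$. Its inverse, along $\bar\gamma$ of the same length $\delta$, has Lipschitz constant $e^{\sqrt\delta}$. Since $e^{\sqrt\delta}>e^{B\delta}$ whenever $\sqrt\delta<1/B$, any fixed $B$ fails for $\delta<B^{-2}$. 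The case $B=0$ fails for every $\delta>0$. Finally we check that $M_x=\R$ is complete and the maps are continuous. Thus the hypotheses of Definition~\ref{def:strong-knit} hold, but \eqref{eq:knit-exponential} fails for every $B$.
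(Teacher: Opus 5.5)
Your final construction ($P=[0,1]$, $M_x=\R$, $\mu_{xy}(v)=e^{\sqrt x-\sqrt y}v$, $f(r)=e^{\sqrt r}-1$, $g(r)=e^{\sqrt r}$, with vanishing defects and the length-$\delta$ reversed segment giving Lipschitz constant $e^{\sqrt\delta}$) is exactly the paper's counterexample, with your $\bar\gamma$ playing the role of the paper's subpath $\gamma_{t,0}$ of $\gamma(t)=t$, so the proof is correct. The exploratory detour through $\theta(x)=x^2$ can be deleted, and you should take $0<\delta<\min\{1,B^{-2}\}$ so that the path stays in $P$.
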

\begin{proof}
Take $P=[0,1]$, $M_x=\R$, and
\[
 \mu_{xy}(v)=e^{\sqrt{x}-\sqrt{y}}v.
\]
Every chain product equals $\mu_{x_0x_m}$, so every three-point
and four-point defect vanishes. Since
$|\sqrt{x}-\sqrt{y}|\le\sqrt{|x-y|}$, the original controls hold
with $f(r)=e^{\sqrt r}-1$ and $g(r)=e^{\sqrt r}$.
The four-point inequality holds with any $A\ge0$ and any positive
constants of the prescribed orders. For $\gamma(t)=t$ the sewn
transport is the exact action, and
\[
 \Lip(\Phi(\gamma_{t,0}))=e^{\sqrt t}.
\]
For every finite $B\ge0$, this exceeds $e^{Bt}$ when
$0<t<\min\{1,(B+1)^{-2}\}$. Thus the exponential assertion in
\cite[equation~(28)]{CM} requires an additional hypothesis, while
Theorem~\ref{thm:knitting} retains the full qualitative statement
with general $f$ and $g$.
\end{proof}

\begin{corollary}[Flat holonomy under the local hypotheses]
\label{cor:remark415}
For every approximate action, the maps in
Theorem~\ref{holo:representation} give a consistent based thin
holonomy. Suppose, in addition, that at least one of the following
conditions holds:
\begin{enumerate}[label=\textup{(\alph*)}]
\item the three-point order is $p>2$,
\item the family satisfies Definition~\ref{def:strong-knit},
\item for each relative Lipschitz contraction $H$ of a based loop,
there is a mesh-null family of grids with $\Delta_H(G)\to0$.
\end{enumerate}
Then, for every $a\in P$,
\begin{equation}\label{eq:flat-holonomy}
 \operatorname{Hol}_a(\ker q_a)=\{\Id_{M_a}\},\qquad
 \operatorname{Hol}_a
       =\widehat{\operatorname{Hol}}_a\circ q_a,
\end{equation}
where the factored homomorphism on $\pi_1^{\mathrm{Lip}}(P,a)$
is unique. The factored action is independent of every relative
Lipschitz-homotopy representative and has the same partition formula.
Under the strong hypotheses and the linear bound
\eqref{eq:knit-linear}, a full based class $\widehat h$ satisfies
\begin{equation}\label{eq:flat-holonomy-bound}
 \begin{split}
 \max\{\Lip(\widehat{\operatorname{Hol}}_a(\widehat h)),
          \Lip(\widehat{\operatorname{Hol}}_a(\widehat h)^{-1})\}
 &\le e^{B\widehat L(\widehat h)},\\
 \operatorname{disp}_a(\widehat{\operatorname{Hol}}_a(\widehat h))
 &\le K_p e^{B\widehat L(\widehat h)}
                 \widehat L(\widehat h)^p.
 \end{split}
\end{equation}
Here $p=2+\kappa$, $K_p=2^pC\zeta(p)$, and
$\widehat L(\widehat h)=\inf_{\lambda\in\widehat h}\len(\lambda)$.
\end{corollary}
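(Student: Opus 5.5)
The plan is to assemble this corollary from results already in hand, mainly by checking that each of (a), (b), (c) yields condition (ii) or (iii) of Theorem~\ref{holo:flatness}. The first assertion, consistent based thin holonomy, is exactly Theorem~\ref{holo:representation} together with the basepoint covariance of Proposition~\ref{holo:basepoint}, so no new argument is needed there.

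For the flatness conclusion I will verify condition (iii) of Theorem~\ref{holo:flatness}. Take a based loop $\lambda:a\to a$ with a relative Lipschitz contraction $H$ to $c_a$.
\begin{enumerate}[label=\textup{(\alph*)}]
\item Under (a), Theorem~\ref{grid:superquadratic} gives $\Phi(\lambda)=\Phi(c_a)=\Id_{M_a}$.
\item Under (b), the specialization \eqref{eq:strong-three-knit} shows the family is an approximate action of three-point order $2+\kappa>2$. Hence case (a) applies. Alternatively, Theorem~\ref{thm:knitting} gives invariance under relative Lipschitz homotopy directly.
\item Under (c), Theorem~\ref{grid:descent} applied to $H$ with the given mesh-null family gives $\Phi(\lambda)=\Phi(c_a)=\Id$.
\end{enumerate}
In each case (iii) holds. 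Theorem~\ref{holo:flatness} then gives (iv), which is $\operatorname{Hol}_a(\ker q_a)=\{\Id\}$, and (v), the factorization $\operatorname{Hol}_a=\widehat{\operatorname{Hol}}_a q_a$. Uniqueness of the factorization follows from surjectivity of $q_a$, as recorded in that theorem. Representative independence and the partition formula follow from (ii) together with \eqref{holo:limit}: every representative of $\widehat h$ is a loop whose sewn transport equals the common value and is the limit of its partition products.

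For \eqref{eq:flat-holonomy-bound}, I work under (b) together with \eqref{eq:knit-linear}. Corollary~\ref{cor:knit-linear} allows the chain control $g(r)=e^{Br}$. I then apply \eqref{holo:full-bound} and \eqref{holo:full-disp} with this $g$ and with the sewing constant of order $p=2+\kappa$, so that $K$ is replaced by $K_p$. Concretely, I pick any representative $\lambda$ of $\widehat h$. Since $\mu_{aa}=\Id$, the local estimate \eqref{eq:knit-local} at $s=0$, $t=1$ bounds the displacement by $K_pe^{B\len\lambda}\len(\lambda)^p$. Estimate \eqref{eq:knit-lip}, applied to $\lambda$ and to $\bar\lambda$, bounds both Lipschitz constants by $e^{B\len\lambda}$. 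Taking the infimum over representatives and using continuity and monotonicity of $r\mapsto e^{Br}r^p$ gives the stated bounds in terms of $\widehat L(\widehat h)$.

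The one step that needs care is this last normalization. I must make sure the displacement bound uses $K_p$ with $p=2+\kappa$ and not the weaker sewing gain. This is justified because the pulled-back three-point defect \eqref{eq:strong-three-knit} has total order $2+\kappa$, so the deletion estimate of Lemma~\ref{lem:intrinsic-refinement} runs with that $p$. Everything else is bookkeeping.
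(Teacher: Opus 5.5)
Your proposal is correct and follows the paper's proof essentially step for step. The only differences are that the paper checks $\operatorname{Hol}_a(n)=\Id_{M_a}$ directly for $n\in\ker q_a$ rather than verifying condition (iii), which is the same check, and that for (b) it cites Theorem~\ref{thm:knitting}, while your reduction to (a) through the order $2+\kappa$ is also noted in the paper. The bounds are then obtained exactly as you describe: Corollary~\ref{cor:knit-linear} applied to each representative and its reverse, the local estimate at $(s,t)=(0,1)$ with $\mu_{aa}=\Id$, and infima using continuity and monotonicity of $r\mapsto e^{Br}$ and $r\mapsto K_pe^{Br}r^p$.
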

\begin{proof}
The thin representation is already constructed under the basic
assumptions. Let $n\in\ker q_a$, let $\lambda$ represent it, and
let $H$ be a relative Lipschitz contraction of $\lambda$ to $c_a$.
Under (a), Theorem~\ref{grid:superquadratic} gives
$\Phi(\lambda)=\Phi(c_a)$. Under (b), Theorem~\ref{thm:knitting}
gives the same equality. Under (c), Theorem~\ref{grid:descent}
applied to the stipulated family gives that equality. In each case,
\[
 \operatorname{Hol}_a(n)=\Phi(\lambda)=\Phi(c_a)=\Id_{M_a}.
\]
Theorem~\ref{holo:flatness} therefore gives the unique factorizations
and \eqref{eq:flat-holonomy}. In the linear case,
Corollary~\ref{cor:knit-linear} gives the first bound for each loop
representative and its reverse. Its intrinsic local bound at
$(s,t)=(0,1)$ gives the displacement bound, using $\mu_{aa}=\Id$.
The maps are independent of the relative Lipschitz-homotopy representative, and the
functions $r\mapsto e^{Br}$ and $r\mapsto K_pe^{Br}r^p$ are
continuous and nondecreasing. Infima over representatives give
\eqref{eq:flat-holonomy-bound}.
\end{proof}

This establishes both assertions of \cite[Remark~4.15]{CM} in the
relative Lipschitz category: the conjectural sewing construction
gives based holonomy, and knitting makes that holonomy trivial on
the relatively Lipschitz-contractible loops. Conditions (a) and
(b) verify flatness directly from local increment estimates.
Condition (c) is the separate sufficient comparison criterion,
whose concrete scalar hypotheses were given in
Section~\ref{grid:section}.

In the next section, two integral models will test and reinforce the conclusions present until now and in the introduction: an area example at order
two, and a disk metric separating Lipschitz contractions from continuous
ones even under strong knitting.

\section{The sharp obstruction to flat holonomy}
\label{sec:curvature}

The first model uses chord integrals of $x_1\,dx_2$. Its sewn transport
records signed area around a loop. Computing both its three-point defect
and its holonomy will locate the threshold in
Theorem~\ref{grid:superquadratic}.

For $c\in\R$, write $T_c(u)=u+c$. Then
\begin{equation}\label{curvature:translations}
 T_cT_d=T_{c+d},\qquad T_c^{-1}=T_{-c},\qquad
 \Lip(T_c)=1,\qquad \dist(T_c,T_d)=|c-d|.
\end{equation}
All fibers in this section are the ordinary complete metric space $\R$.

\begin{proposition}\label{curvature:model}
On Euclidean $P=\R^2$, define
\begin{equation}\label{curvature:beta}
 \beta(x,y)=\frac{x_1+y_1}{2}(y_2-x_2),\qquad
 \mu_{xy}=T_{\beta(x,y)}.
\end{equation}
These maps form an approximate action with
\[
 f=0,\qquad g=1,\qquad N=1,\qquad
 C_1=\frac12,\qquad a_1=b_1=1,\qquad \eps=1.
\]
They have exact pairwise inverses. For every Lipschitz
$\gamma=(\gamma_1,\gamma_2):[0,1]\to\R^2$ and all $s,t\in[0,1]$,
\begin{equation}\label{curvature:integral}
 \varphi_{[\gamma_{s,t}]}=T_{I_{s,t}(\gamma)},\qquad
 I_{s,t}(\gamma)=\int_s^t\gamma_1(r)\gamma_2'(r)\,dr,
\end{equation}
where the integral is a signed Lebesgue integral when $s>t$.
\end{proposition}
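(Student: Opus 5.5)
The plan has three parts: verify the axioms of Definition~\ref{def:approx} by direct algebra on $\beta$, then identify the sewn transport with a Riemann--Stieltjes limit through Theorem~\ref{thm:main}. Since every $\mu_{xy}$ is a translation, \eqref{curvature:translations} settles most of the metric data at once. We have $\beta(x,x)=0$, so $\mu_{xx}=\Id$. Every chain product is a translation, so it has Lipschitz constant one, which gives $f=0$ and $g=1$. Antisymmetry $\beta(y,x)=-\beta(x,y)$ gives $\mu_{xy}\mu_{yx}=T_0=\Id$, so the pairwise inverses are exact.

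For the defect \eqref{eq:defect} I compute $\beta(x,z)+\beta(z,y)-\beta(x,y)$ directly. Expanding,
\[
2\bigl(\beta(x,z)+\beta(z,y)-\beta(x,y)\bigr)
=(x_1+z_1)(z_2-x_2)+(z_1+y_1)(y_2-z_2)-(x_1+y_1)(y_2-x_2).
\]
This is the signed-area (shoelace) expression of the triangle $x,z,y$, and it equals $(z_1-x_1)(y_2-z_2)-(y_1-z_1)(z_2-x_2)$. Each product is bounded by $|z-x|\,|y-z|$, so the defect is at most $\tfrac12\cdot 2\,d(x,z)d(z,y)$. To get the sharper constant $C_1=\tfrac12$, I will bound the combination as a $2\times2$ determinant of the vectors $z-x$ and $y-z$. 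Its absolute value is at most $|z-x|\,|y-z|$, and after the factor $\tfrac12$ this gives $C_1=\tfrac12$ with $a_1=b_1=1$. The order is therefore $p=2$, so $\eps=1$.

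For \eqref{curvature:integral}, Theorem~\ref{thm:main} and Proposition~\ref{prop:path-algebra} give $\varphi_{[\gamma_{s,t}]}=\phi^\gamma_{s,t}$. This is the mesh limit of $P_\gamma(D)=T_{\sum_j\beta(\gamma(t_{j-1}),\gamma(t_j))}$, and convergence of translations is convergence of their offsets. So it suffices to show $\sum_j\beta(\gamma(t_{j-1}),\gamma(t_j))\to\int_s^t\gamma_1\gamma_2'$. First take $s<t$. Since $\gamma_2$ is Lipschitz, it is absolutely continuous, and each term equals $\int_{t_{j-1}}^{t_j}\tfrac{\gamma_1(t_{j-1})+\gamma_1(t_j)}{2}\gamma_2'(r)\,dr$. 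The difference from $\int\gamma_1\gamma_2'$ is therefore bounded by $\sum_j\int_{t_{j-1}}^{t_j}|\bar\gamma_1^{(j)}-\gamma_1(r)|\,|\gamma_2'(r)|\,dr$. Each integrand is at most $\Lip(\gamma_1)\mesh(D)\Lip(\gamma_2)$, so the total is at most $\Lip(\gamma)^2\mesh(D)|t-s|\to0$. For $s>t$, the reversed subdivision flips the sign of each term, by antisymmetry of $\beta$. This matches the signed integral convention, or alternatively follows from the inverse identity in \eqref{eq:fullflow}. The case $s=t$ is trivial.

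I expect no step to be a real obstacle. The only delicate point is getting the stated constant $C_1=\tfrac12$ in the defect. That needs the exact algebraic identity for the three-point defect, namely the determinant of $z-x$ and $y-z$ with the factor $\tfrac12$, rather than a crude triangle-inequality bound.
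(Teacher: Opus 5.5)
Your proposal is correct and follows the paper's proof. You obtain the defect from the exact identity $\beta(x,y)-\beta(x,z)-\beta(z,y)=-\tfrac12\det(z-x,y-z)$, bounded by $\tfrac12|z-x|\,|y-z|$, and you identify the transport by writing each trapezoidal term as an integral against $\gamma_2'$ using absolute continuity. The only cosmetic difference is that you work directly with oriented subdivisions of $[s,t]$ and use antisymmetry of $\beta$ when $s>t$, whereas the paper proves the formula on $[0,1]$ and transfers it to $\gamma_{s,t}$ by a signed affine change of variables.
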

\begin{proof}
The diagonal and reversal identities are
\[
 \beta(x,x)=0,\qquad \beta(y,x)=-\beta(x,y).
\]
Every finite product of increments is a translation, proving the
Lipschitz assumptions with $f=0$ and $g=1$. Direct expansion gives
\begin{equation}\label{curvature:triangle}
 \beta(x,y)-\beta(x,z)-\beta(z,y)
 =-\frac12\det(z-x,y-z).
\end{equation}
Consequently
\[
 \dist_{xy}(\mu_{xy},\mu_{xz}\mu_{zy})
 =\frac12|\det(z-x,y-z)|
 \le\frac12|z-x|\,|y-z|.
\]
This verifies every approximate-action assumption.

Put $L=\Lip(\gamma)$. The coordinate functions of $\gamma$ are
absolutely continuous, have derivatives bounded in absolute value by
$L$ almost everywhere, and satisfy the fundamental theorem of calculus
on every subinterval. For a partition
$D=(0=t_0<\cdots<t_n=1)$, write $h_j=t_j-t_{j-1}$. Its product is
translation by
\[
 S_D=\sum_{j=1}^n
 \frac{\gamma_1(t_{j-1})+\gamma_1(t_j)}2
       \bigl(\gamma_2(t_j)-\gamma_2(t_{j-1})\bigr).
\]
For $r\in[t_{j-1},t_j]$,
\[
 \left|\frac{\gamma_1(t_{j-1})+\gamma_1(t_j)}2-\gamma_1(r)\right|
 \le\frac L2\bigl(r-t_{j-1}+t_j-r\bigr)=\frac L2h_j.
\]
The fundamental theorem and $|\gamma_2'|\le L$ yield
\begin{align*}
 \left|S_D-\int_0^1\gamma_1(r)\gamma_2'(r)\,dr\right|
 &\le\sum_{j=1}^n\int_{t_{j-1}}^{t_j}
       \frac L2h_j|\gamma_2'(r)|\,dr\\
 &\le\frac{L^2}{2}\sum_{j=1}^nh_j^2
 \le\frac{L^2}{2}\mesh(D).
\end{align*}
Equation~\eqref{curvature:translations} identifies this scalar limit
with the sewn limit. For $s\ne t$, the affine subpath has derivative
\[
 (\gamma_{s,t})'(u)=(t-s)\gamma'(s+(t-s)u)
 \quad\text{for almost every }u.
\]
Applying the proved formula to this path and making the signed affine
change of variable gives \eqref{curvature:integral}. For $s=t$, the
path is constant and both sides are identities.
\end{proof}

\begin{theorem}\label{curvature:nonflat}
The approximate action \eqref{curvature:beta} has nontrivial sewn
holonomy on a relatively Lipschitz-contractible loop. This holds both
on $\R^2$ and on its Euclidean unit square. Its thin holonomy does not
factor through relative Lipschitz homotopy classes.
\end{theorem}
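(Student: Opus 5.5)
We take the positively oriented boundary loop $\lambda$ of the unit square $[0,1]^2$, based at $a=(0,0)$. It traverses the four edges $(0,0)\to(1,0)\to(1,1)\to(0,1)\to(0,0)$, each on a quarter of the parameter interval at speed $4$, so $\lambda$ is Lipschitz with image in the unit square.

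\emph{Holonomy.} Proposition~\ref{curvature:model} gives $\Phi(\lambda)=T_{I}$ with $I=\int_0^1\lambda_1\lambda_2'\,dr$. We evaluate $I$ edge by edge.
\begin{itemize}
\item On the two horizontal edges $\lambda_2'=0$, so they contribute nothing.
\item On the edge from $(1,0)$ to $(1,1)$ we have $\lambda_1=1$ and $\lambda_2$ increases by $1$, contributing $1$.
\item On the edge from $(0,1)$ to $(0,0)$ we have $\lambda_1=0$, contributing $0$.
\end{itemize}
Hence $I=1$, and $\operatorname{Hol}_a([\lambda])=T_1\ne\Id_{\R}$, since $\dist(T_1,\Id)=1$ by \eqref{curvature:translations}.

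\emph{Contraction.} The unit square is convex, so we use the straight-line contraction
\[
 H(s,t)=(1-t)\lambda(s)+t\,a .
\]
This map has the following properties.
\begin{itemize}
\item It is Lipschitz, with constant at most $\Lip(\lambda)+\sqrt2$.
\item Its boundary values are $H(s,0)=\lambda(s)$ and $H(s,1)=a$.
\item Its sides are fixed: $H(0,t)=H(1,t)=a$, because $\lambda(0)=\lambda(1)=a$.
\end{itemize}
Its image stays in $[0,1]^2$. So $\lambda$ is relatively Lipschitz-contractible both in $\R^2$ and in the unit square. Thus $[\lambda]\in\ker q_a$ while $\operatorname{Hol}_a([\lambda])\ne\Id$, so $\mathcal N_a\ne\{\Id\}$. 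Theorem~\ref{holo:flatness} (condition (iv) fails) then says the thin holonomy does not factor through $\pi_1^{\mathrm{Lip}}$.

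\emph{The unit-square case.} Here $P$ is the square with the restricted metric. The restricted family is still an approximate action with the same constants, since the estimates in Proposition~\ref{curvature:model} are pointwise in $x,y,z$. The partition products along $\lambda$ are unchanged, because they involve only points of the square. Hence the sewn transport is the same $T_1$. This follows from Proposition~\ref{prop:pullback} applied to the isometric inclusion, or directly from the partition formula.

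There is no genuine obstacle. The only points needing care are the sign/orientation convention in computing $I$ and checking that the contraction is relative and stays in the square. All other inputs come from Proposition~\ref{curvature:model} and Theorem~\ref{holo:flatness}.
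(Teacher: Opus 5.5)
Your proposal is correct and follows the paper's proof. It uses the same counterclockwise speed-$4$ square loop with $I_{0,1}(\lambda)=1$ and the same radial contraction $(1-t)\lambda(r)$, which stays in the square by convexity. Non-factorization comes from a relatively contractible loop having transport $T_1\ne\Id$; your appeal to condition (iv) of Theorem~\ref{holo:flatness} is an equivalent packaging of the paper's direct remark. Your explicit check that restriction to the square preserves the constants and partition products spells out a point the paper leaves implicit.
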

\begin{proof}
Let $\lambda$ traverse the unit-square boundary counterclockwise from
the origin, with constant speed on each of its four sides:
\begin{equation}\label{curvature:square-loop}
 \lambda(r)=
 \begin{cases}
 (4r,0),&0\le r\le\tfrac14,\\
 (1,4r-1),&\tfrac14\le r\le\tfrac12,\\
 (3-4r,1),&\tfrac12\le r\le\tfrac34,\\
 (0,4-4r),&\tfrac34\le r\le1.
 \end{cases}
\end{equation}
The four affine formulas agree at their joining points and have speed
$4$, so $\Lip(\lambda)=4$. On the first and third sides
$\lambda_2'=0$. On the fourth side $\lambda_1=0$. Hence
\begin{equation}\label{curvature:holonomy-one}
 I_{0,1}(\lambda)=\int_{1/4}^{1/2}4\,dr=1,\qquad
 \varphi_{[\lambda]}=T_1\ne\Id.
\end{equation}
Define
\[
 H(r,t)=(1-t)\lambda(r).
\]
Convexity places its image in $[0,1]^2$. Its boundary values satisfy
\[
 H(r,0)=\lambda(r),\quad H(r,1)=0,\quad
 H(0,t)=H(1,t)=0.
\]
For two domain points,
\begin{align*}
 |H(r,t)-H(r',t')|
 &\le4|r-r'|+\sqrt2|t-t'|\\
 &\le\sqrt{18}\sqrt{|r-r'|^2+|t-t'|^2}.
\end{align*}
Thus $H$ is a relative Lipschitz contraction. Its constant boundary
loop has identity transport, whereas \eqref{curvature:holonomy-one}
is nonidentity. Therefore the action cannot factor through the full
relative Lipschitz homotopy quotient.

In fact, every Lipschitz loop $\eta$ based at the origin in either
of these spaces admits the contraction $(1-t)\eta(r)$. If
$L_\eta=\Lip(\eta)$ and $R_\eta=\sup_r|\eta(r)|$, its Lipschitz
constant is at most $\sqrt{L_\eta^2+R_\eta^2}$. Their full based
Lipschitz loop groups are therefore trivial, although the corresponding
thin holonomy homomorphisms contain $T_1$ in their images.
\end{proof}

\begin{proposition}\label{curvature:area}
The rectangular comparison defect of \eqref{curvature:beta} on an
axis-parallel rectangle is exactly its Euclidean area. For the radial
contraction in Theorem~\ref{curvature:nonflat}, no family of grids
whose horizontal mesh tends to zero can have total rectangular
comparison defect tending to zero.
\end{proposition}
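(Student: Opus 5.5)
The plan is to exploit that every map in this model is a translation, so every comparison distance is the absolute value of a real number and all products are additive. For a cell with corners $x,u,v,y$ (bottom-left, bottom-right, top-left, top-right, in the notation of \eqref{grid:cell-defect}), \eqref{curvature:translations} gives
\[
 \Box=\bigl|\beta(x,u)+\beta(u,y)-\beta(x,v)-\beta(v,y)\bigr|
 =\bigl|\beta(x,u)+\beta(u,y)+\beta(y,v)+\beta(v,x)\bigr|,
\]
using the antisymmetry $\beta(y,x)=-\beta(x,y)$. The right-hand side is the trapezoidal sum of $x_1\,dx_2$ around the closed polygon $x\to u\to y\to v\to x$.

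\textbf{Step 1: the closed-polygon sum is a signed area.} I would apply the triangle identity \eqref{curvature:triangle} twice, splitting along the diagonal through $x$ and $y$, or equivalently expand $\beta$ directly. This identifies the closed sum with the shoelace formula $-\tfrac12\sum\det$. Hence it equals, up to sign, the signed area enclosed by the quadrilateral. For an axis-parallel rectangle $x=(c,e)$, $u=(c+h,e)$, $v=(c,e+k)$, $y=(c+h,e+k)$, a two-line computation suffices. The horizontal edges contribute zero because their $\Delta x_2=0$. The vertical edges contribute $(c+h)k$ and $-ck$. The defect is therefore $hk$, the Euclidean area. This proves the first assertion.

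\textbf{Step 2: row differences telescope exactly.} Let $H(r,t)=(1-t)\lambda(r)$ and let $G=(S,T)$ be any grid. The row product $A_j(G)$ is translation by $\sigma_j=\sum_i\beta(z_{i-1,j},z_{i,j})$. For adjacent rows, write $\sigma_j-\sigma_{j+1}$ as the sum over $i$ of the closed-polygon quantities of Step~1. The vertical terms $\beta(z_{i,j},z_{i,j+1})$ cancel between neighbouring cells, and the two outermost ones vanish because $z_{0,j}=z_{n,j}=0$ and $\beta(0,0)=0$. Thus $\sigma_j-\sigma_{j+1}$ equals a sum of signed cell quantities, whose absolute values are the $\Box_{i,j}(G)$. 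Summing over $j$ and using the triangle inequality gives
\[
 \Delta_H(G)\ge|\sigma_0-\sigma_m|=|\sigma_0|.
\]
Here $\sigma_m=0$ because the top row is the constant path at the origin. This is also \eqref{grid:global-estimate} with $\Lambda_H=1$.

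\textbf{Step 3: the bottom row has amount near one.} The bottom row is the partition sum $S_S$ of $\lambda$ over $S$. The estimate in the proof of Proposition~\ref{curvature:model} gives $|S_S-I_{0,1}(\lambda)|\le\tfrac{L^2}{2}\mesh(S)$ with $L=4$. By \eqref{curvature:holonomy-one}, $I_{0,1}(\lambda)=1$. Hence
\[
 \Delta_H(G)\ge1-8\,\mesh(S),
\]
which tends to $1$, not $0$, as the horizontal mesh tends to zero. No assumption on the vertical partition is needed.

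The only delicate point is the sign and orientation bookkeeping in Step~2. Each cell quantity must be written with the orientation convention of \eqref{grid:cell-defect}, so that the vertical edges genuinely cancel between adjacent cells. I would verify this on a single strip before summing.
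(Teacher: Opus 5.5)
Your proof is correct and follows the paper's argument: the explicit corner computation for an axis-parallel rectangle, then exact telescoping of the signed cell defects down to the bottom-row partition sum, then the partition estimate from Proposition~\ref{curvature:model}. The paper cancels over all cells at once, whereas you telescope strip by strip; this is the same cancellation, equivalently Lemma~\ref{grid:strip} with $\Lambda_H=1$. Your version also gives the explicit bound $\Delta_H(G)\ge1-8\,\mesh(S)$, where the paper records only $\liminf\ge1$.
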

\begin{proof}
For corners
\[
 x=(a,c),\quad u=(b,c),\quad v=(a,d),\quad y=(b,d),
\]
one has
\[
 \mu_{xu}\mu_{uy}=T_{b(d-c)},\qquad
 \mu_{xv}\mu_{vy}=T_{a(d-c)}.
\]
Consequently
\begin{equation}\label{curvature:rectangle}
 \dist_{xy}(\mu_{xu}\mu_{uy},\mu_{xv}\mu_{vy})
 =|b-a|\,|d-c|.
\end{equation}
The $n^2$ cells of a regular spatial grid on the unit square thus
have total defect $n^2n^{-2}=1$.

For the actual relative contraction $H$, take partitions
$0=r_0<\cdots<r_n=1$ and $0=t_0<\cdots<t_m=1$ and set
$x_{ij}=H(r_i,t_j)$. Define the signed cell defects by
\begin{align*}
 E_{ij}={}&\beta(x_{i-1,j-1},x_{i,j-1})
          +\beta(x_{i,j-1},x_{ij})\\
         &-\beta(x_{i-1,j-1},x_{i-1,j})
          -\beta(x_{i-1,j},x_{ij}).
\end{align*}
The unsigned rectangular comparison defect is $|E_{ij}|$.
Summing first in $i$ and then in $j$ cancels all interior edges.
Both vertical boundary paths and the top path are constant, giving
\[
 \sum_{j=1}^m\sum_{i=1}^nE_{ij}
 =\sum_{i=1}^n\beta(\lambda(r_{i-1}),\lambda(r_i))=S_D.
\]
It follows that
\begin{equation}\label{curvature:grid-certificate}
 \sum_{j=1}^m\sum_{i=1}^n|E_{ij}|\ge|S_D|,
 \qquad
 \liminf_{\mesh(D)\to0}\sum_{j=1}^m\sum_{i=1}^n|E_{ij}|\ge1.
\end{equation}
Here the last inequality follows from
Proposition~\ref{curvature:model} and
\eqref{curvature:holonomy-one}, independently of the vertical
partition. It applies in particular to every family whose two meshes
tend to zero.
\end{proof}

\begin{theorem}\label{curvature:sharp}
For every prescribed $\eps_0\in(0,1]$, the restriction of
\eqref{curvature:beta} to $P=[0,1]^2$ satisfies the three-point
hypothesis with gain $\eps_0$ and has nonflat sewn holonomy.
It admits no three-point presentation with gain strictly greater
than one.
\end{theorem}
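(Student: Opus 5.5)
The proof has three parts: the three-point hypothesis for every gain $\eps_0\in(0,1]$, nonflatness, and the impossibility of any presentation of order greater than two.

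For the first part, restrict \eqref{curvature:beta} to $P=[0,1]^2$ with the Euclidean metric, which has diameter $\sqrt2$. The exact identity \eqref{curvature:triangle} gives
\[
 \dist(\mu_{xy},\mu_{xz}\mu_{zy})\le\tfrac12|z-x|\,|y-z|.
\]
For $\eps_0\in(0,1]$, choose $a_1,b_1>0$ with $a_1\le1$, $b_1\le1$ and $a_1+b_1=1+\eps_0$. Since every distance is at most $\sqrt2$, we have $|y-z|\le 2^{(1-a_1)/2}|y-z|^{a_1}$, and similarly for $|z-x|$ with exponent $b_1$. Hence the defect is at most $C_1|y-z|^{a_1}|z-x|^{b_1}$ with $C_1=\tfrac12 2^{(2-a_1-b_1)/2}$. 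The data $f=0$ and $g=1$ hold as in Proposition~\ref{curvature:model}, because all increments are translations. Nonflatness is then Theorem~\ref{curvature:nonflat}, which was proved on the unit square itself: the loop \eqref{curvature:square-loop} and its radial contraction lie in $[0,1]^2$, and the holonomy is $T_1$.

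For the last assertion, I argue by contradiction. Suppose some presentation satisfies \eqref{eq:defect} on $[0,1]^2$ with exponents $a_r+b_r=p>2$, possibly with different $N$, $C_r$, $f$, $g$. The same maps $\mu$ then satisfy the hypotheses of Theorem~\ref{grid:superquadratic}. That theorem gives invariance of sewn transport under relative Lipschitz homotopy, and in particular $\Phi(\lambda)=\Phi(c_0)=\Id$ for the contractible square loop. The sewn transport does not depend on the choice of presentation. It is the partition limit of the fixed maps $\mu$, which was computed in Proposition~\ref{curvature:model} as $T_1$. This is a contradiction.

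A direct check, independent of the descent theorem, can also be given. Take $x=(0,0)$, $z=(h,0)$, $y=(h,h)$. By \eqref{curvature:triangle} the defect equals $h^2/2$, while any presentation of order $p>2$ bounds it by $C h^p$. This fails as $h\to0$. I expect no real obstacle. The only points needing care are the bounded-diameter conversion used to lower the exponents in the first part, and noting that $\Phi$ is intrinsic to $\mu$ and not to the constants of the presentation.
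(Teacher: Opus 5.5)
Your proposal is correct. The first two parts coincide with the paper's proof. The paper makes the symmetric choice $a_1=b_1=(1+\eps_0)/2$ and gets the same constant $C_1=\tfrac12(\sqrt2)^{1-\eps_0}$ from the same bounded-diameter conversion. It also reuses the loop \eqref{curvature:square-loop} and its radial contraction inside the square, as you do. Your closing direct check with $x=(0,0)$, $z=(h,0)$, $y=(h,h)$ is exactly the paper's argument for the last assertion.

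Your primary argument for that assertion is a different route. It is the contrapositive of Theorem~\ref{grid:superquadratic}, combined with the holonomy $T_1$. It is valid for four reasons:
\begin{enumerate}
\item a hypothetical presentation of order $p>2$ leaves $\mu$ unchanged, and with it $f=0$ and $g=1$;
\item the radial contraction stays in $[0,1]^2$;
\item $\Phi$ is the partition limit of the fixed maps $\mu$;
\item $\Phi$ is therefore independent of the constants used to prove convergence, which you correctly flag.
\end{enumerate}

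What this route buys is conceptual. It shows that nontrivial holonomy on a relatively Lipschitz-contractible loop is, by itself, enough to rule out every presentation of order greater than two. It does this for any approximate action, without computing a single defect.

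The paper's local test is more elementary. It uses only the identity \eqref{curvature:triangle}, and needs neither the grid machinery nor the holonomy computation. It shows directly that the defect is exactly of order two at arbitrarily small scales. It also keeps the no-presentation claim logically independent of Theorem~\ref{grid:superquadratic}, so the sharpness discussion after the theorem combines two separately established facts.
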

\begin{proof}
Put $D_0=\sqrt2$ and $a=b=(1+\eps_0)/2$. These exponents are
positive and at most one. For $0<r,s\le D_0$,
\[
 rs=r^as^b r^{1-a}s^{1-b}
 \le D_0^{1-\eps_0}r^as^b.
\]
The same inequality holds if either distance vanishes. The defect
bound of Proposition~\ref{curvature:model} therefore supplies
\[
 N=1,\qquad C_1=\frac12D_0^{1-\eps_0},\qquad
 a_1=b_1=\frac{1+\eps_0}{2}.
\]
The loop and contraction from Theorem~\ref{curvature:nonflat}
remain in this square and still have holonomy $T_1$.

Suppose a finite presentation existed with $\eps'>1$ and
$a_r+b_r=1+\eps'$ for every $r$. For $0<h\le1$, set
\[
 x=(0,0),\qquad z=(h,0),\qquad y=(h,h).
\]
Equation~\eqref{curvature:triangle} gives defect $h^2/2$, and both
successive distances are $h$. The proposed estimate would imply
\[
 \frac12h^2\le\sum_r C_r h^{1+\eps'},\qquad
 \frac12\le\left(\sum_r C_r\right)h^{\eps'-1}
 \longrightarrow0.
\]
This contradiction excludes every such finite presentation.
\end{proof}

Combined with the superquadratic descent theorem, this proves that
total three-point order strictly greater than two is the sharp
universal sufficient threshold for invariance under relative
Lipschitz homotopy. At every smaller admissible order, thin holonomy
still exists, but the example has nontrivial holonomy on a relatively
Lipschitz-contractible loop. This distinction concerns Lipschitz
homotopies throughout. An identification with the ordinary
topological fundamental group requires a separate comparison
between continuous and Lipschitz homotopies of the metric space.

\subsection{Continuous contractibility and the metric hypothesis}

The distinction between continuous and Lipschitz homotopies persists
even under the strong knitting assumptions. It cannot be removed by
retaining only the topology of the parameter space.

\begin{proposition}\label{curvature:continuous-scope}
There is a compact metric space homeomorphic to the closed disk and
an approximate action with three-point gain $\eps=2$, satisfying
the strong four-point estimate with linear control constant zero,
whose sewn holonomy is nontrivial on a Lipschitz loop admitting a
continuous relative contraction.
\end{proposition}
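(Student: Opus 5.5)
We plan to build the example from the translation mechanism of Proposition~\ref{curvature:model}, with two changes. The curvature is moved into an annulus. The metric on that annulus is then snowflaked, so that Lipschitz homotopies cannot sweep across it while continuous ones still can.

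\textbf{The action.} Let $\Omega$ be the closed Euclidean unit disk and fix a smooth cutoff $\rho(|x|)$ with $\rho=0$ for $|x|\le 1/4$ and $\rho=1$ for $|x|\ge 1/2$. Put $\omega=\rho(|x|)\,d\theta$, a smooth $1$-form on $\Omega$ (it vanishes near the origin). Define
\[
 \beta(x,y)=\int_{[x,y]}\omega,\qquad \mu_{xy}=T_{\beta(x,y)}.
\]
All maps are translations, so $f=0$ and $g=1$, and the linear constant in \eqref{eq:knit-linear} is $B=0$. Stokes' theorem gives
\[
 \beta(x,y)-\beta(x,z)-\beta(z,y)=-\int_{\Delta(x,z,y)}d\omega .
\]
Here $d\omega=\rho'(|x|)\,dr\wedge d\theta$ is bounded and supported in the annulus $A=\{1/4\le|x|\le1/2\}$. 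Likewise, the four-point difference in \eqref{eq:strong-knit} is the flux of $d\omega$ through the two triangles $(x,u,y)$ and $(x,v,y)$. On the outer circle $\rho=1$, so the boundary loop $\lambda(r)=e^{2\pi i r}$ has sewn transport $T_{2\pi}\neq\Id$. The proof is the chord computation of Proposition~\ref{curvature:model}: the chord integrals of $d\theta$ converge to $\int_\lambda\omega=2\pi$, with error $O(\mesh)$ from the smoothness of $\omega$.

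\textbf{The metric.} Let $d$ be the length-type metric on $\Omega$ obtained as the infimum over finite chains $x=w_0,\dots,w_k=y$ of $\sum c(w_{i-1},w_i)$. The cost is $c(w,w')=|w-w'|^{1/3}$ if the segment $[w,w']$ meets a slightly enlarged closed annulus $A'\supset A$, and $c(w,w')=|w-w'|$ otherwise.

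We then check the following.
\begin{enumerate}[label=\textup{(\arabic*)}]
\item $|x-y|\le d(x,y)\le C|x-y|^{1/3}$. Hence $d$ induces the Euclidean topology, $(\Omega,d)$ is compact and homeomorphic to the disk, and the Euclidean contraction $(1-t)\lambda(r)$ is a continuous relative contraction in $d$.
\item On $|x|\ge 3/4$ the metric $d$ is bi-Lipschitz to Euclidean, so $\lambda$ is $d$-Lipschitz.
\item Every $d$-Lipschitz map from an interval that meets $A'$ is locally constant there, because $A'$ contains a snowflaked region of Hausdorff dimension at least $3$. Consequently the image of a relative Lipschitz homotopy of $\lambda$ cannot cross $A'$. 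Such a homotopy stays in the outer annulus, where $\omega$ is closed, and this is consistent with the nontrivial transport. The action itself already certifies that no relative Lipschitz contraction exists.
\end{enumerate}

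\textbf{Defect bounds.} For \eqref{eq:defect} with $a_1=b_1=3/2$ (gain $\eps=2$), we split on whether the triangle $\Delta(x,z,y)$ meets $A$.
\begin{enumerate}[label=\textup{(\roman*)}]
\item If it does not, the defect vanishes.
\item If it does, the defect is at most $\|d\omega\|_\infty\cdot|z-x|\,|y-z|$. Some side of the triangle meets $A$, and we want to bound $|z-x|\,|y-z|\le C\,d(z,x)^{3/2}d(y,z)^{3/2}$.
\end{enumerate}
When both sides at $z$ meet $A'$, this follows from $d\ge c|\cdot|^{1/3}$ on such pairs. The \textbf{main obstacle} is the mixed geometry, where one side is far from $A'$ and hence Euclidean. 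We would first handle it by a comparison lemma: if the triangle meets $A$ but the side $[z,x]$ misses $A'$, then $\operatorname{dist}([z,x],A)\ge\delta_0$, so $|y-z|\ge\delta_0$. In that case $d(y,z)\ge c$ and $|z-x|\le C\,d(z,x)^{3/2}\cdot(\text{bounded})$ fails only for short $|z-x|$. If it fails there, we would instead shrink the relevant triangles by exploiting that $d\omega$ integrated over a thin triangle is bounded by its area, which is at most $|z-x|\,|y-z|\sin\angle$.

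The four-point estimate \eqref{eq:strong-knit} is handled the same way, with order $p=3$, $\kappa=1$ and $A=0$. The flux through the quadrilateral is bounded by $|y-v|\,|u-v|+|x-u|\,|u-v|$ times $\|d\omega\|_\infty$, and the same comparison lemma converts these Euclidean products to the required powers of $d$. If the constants in this step cannot be made uniform, the fallback is to raise the snowflake exponent from $1/3$ to something smaller, which only strengthens $d\ge c|\cdot|^{\theta}$ near $A'$.
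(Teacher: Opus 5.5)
\textbf{Gap in the defect bounds.} The step that fails is the one you flag as the main obstacle, and it cannot be repaired with the exponents you chose. With $a_1=b_1=3/2$ you need $\bigl|\int_{\Delta(x,z,y)}d\omega\bigr|\le C\,d(z,x)^{3/2}d(y,z)^{3/2}$ whenever the triangle meets $A$. This is false, and the defect itself violates it, not just your area bound.

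Take $z=(0.9,0)$, $x=(0.9,h)$, $y=(-0.9,0)$, with $A'\subset\{|w|<3/4\}$ as your item (2) requires.
\begin{itemize}
\item The segment $[z,x]$ misses $A'$, so $d(z,x)\le h$. Also $d(y,z)\le 1.8^{1/3}$. Hence the right side is $O(h^{3/2})$.
\item The triangle is a sliver along the $w_1$-axis that crosses $A$ twice. At radius $s$ its widths are $h(0.9+s)/1.8$ and $h(0.9-s)/1.8$, which add to $h$.
\item Since $d\omega=\rho'(|w|)|w|^{-1}\,dw_1\wedge dw_2$, the defect equals $h\int_{1/4}^{1/2}\rho'(s)s^{-1}\,ds+o(h)$. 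For a monotone cutoff this is at least $2h+o(h)$.
\end{itemize}
So no presentation of \eqref{eq:defect} in which every exponent on $d(z,x)$ exceeds $1$ can hold.

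Neither fallback helps.
\begin{itemize}
\item The sliver has a right angle at $z$, so the thin-triangle bound $\tfrac12|z-x||y-z|\sin\angle$ is still of order $h$.
\item Lowering the snowflake exponent on $A'$ does not change $d$ near $z$ and $x$, because they and $[z,x]$ lie in the Euclidean region.
\end{itemize}
The four-point claim fails for the same reason, since \eqref{eq:strong-knit} at $u=z$, $v=y$ reduces to the three-point bound.

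\textbf{Repair.} The statement only needs total order $3$, and Definition~\ref{def:approx} allows several terms. Use the paper's presentation $C(d_1^2d_2+d_1d_2^2)$, with $d_1=d(x,z)$ and $d_2=d(z,y)$, that is, exponent pairs $(2,1)$ and $(1,2)$. With this, your metric works; write $v=x-z$, $w=y-z$, and use $|\cdot|\le 2^{2/3}d$.
\begin{itemize}
\item If $d_1+d_2\ge c_0$, the area bound gives $Cd_1d_2\le (C/c_0)(d_1^2d_2+d_1d_2^2)$.
\item If $d_1+d_2<c_0$ and the triangle meets $A$, take $c_0$ small relative to the Euclidean gap $\delta_0$ between $A$ and $\Omega\setminus A'$. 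Then every vertex lies at distance at least $\delta_0/2$ from $\Omega\setminus A'$. Subadditivity of $t^{1/3}$ along chains gives $d\ge\min\{|\cdot|^{1/3},(\delta_0/2)^{1/3}\}$ there, hence $|v|\le d_1^3$ and $|w|\le d_2^3$.
\end{itemize}
The four-point bound then follows from the identity $b(x,u)+b(u,y)-b(x,v)-b(v,y)=-R(x,u,v)+R(u,v,y)$. It takes the form \eqref{eq:strong-knit} with $N=2$ and the coefficient of $d_P(u,x)$ equal to zero.

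\textbf{Comparison with the paper.} The paper snowflakes only the radial coordinate, $d(x,y)=|x-y|+\bigl||x|-|y|\bigr|^{1/2}$. It then bounds $|\det(v,w)|$ by splitting $v,w$ into radial parts of size $O(d^2)$ and tangential parts of size $O(d)$. This avoids chain-infimum metrics and their boundary effects.

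\textbf{Smaller slips.}
\begin{itemize}
\item $(1-t)\lambda(r)$ is not relative, because it moves the basepoint $(1,0)$. Use $(1-t)\lambda(r)+t\lambda(0)$ instead.
\item In item (1), $|x-y|\le d(x,y)$ holds only up to the factor $2^{-2/3}$. A step meeting $A'$ with Euclidean length in $(1,2]$ costs less than its length.
\item Item (3) fails on $\partial A'$. Short detours just outside $A'$ make $d$ comparable to arc length along its boundary circles. Only the interior version holds, and it is not needed for the statement.
\end{itemize}
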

\begin{proof}
Let $P=\{x\in\R^2:|x|\le1\}$ with metric
\[
 d(x,y)=|x-y|+\bigl||x|-|y|\bigr|^{1/2}.
\]
The second summand is a pseudometric, and the first separates points,
so $d$ is a metric. Euclidean convergence implies $d$-convergence and
the reverse implication follows from $|x-y|\le d(x,y)$. Thus $P$
has the usual topology of the closed disk.

Define $\chi(s)=((4s-1)_+/3)^3$ and the $C^2$ one-form
\[
 A=\chi(|x|^2)(x_1\,dx_2-x_2\,dx_1).
\]
Its exterior derivative on the disk is
\[
 dA=B(x)\,dx_1\wedge dx_2,\qquad
 B(x)=2\chi(|x|^2)+2|x|^2\chi'(|x|^2).
\]
Here $B=0$ for $|x|\le1/2$ and $|B|\le10$ on $P$, since
$0\le\chi\le1$ and $0\le\chi'\le4$ on $[0,1]$. Put
\[
 \mu_{xy}=T_{b(x,y)},\qquad
 b(x,y)=\int_0^1 A(x+u(y-x))[y-x]\,du.
\]
The diagonal, inverse, and chain-Lipschitz assumptions hold with
$f=0$ and $g=1$. The elementary Stokes formula on an affine triangle
gives
\begin{equation}\label{curvature:continuous-triangle}
 |b(x,y)-b(x,z)-b(z,y)|
 \le5|\det(x-z,y-z)|.
\end{equation}
Write $v=x-z$, $w=y-z$, $d_1=d(x,z)$, and $d_2=d(z,y)$.
If $r=|z|\ge1/4$, set $n=z/r$ and let $n^\perp$ be a perpendicular
unit vector. Then
\[
 n\cdot v=\frac{|x|^2-|z|^2-|v|^2}{2r},\qquad
 \bigl||x|^2-|z|^2\bigr|\le2d_1^2,
\]
so $|n\cdot v|\le6d_1^2$ and $|n^\perp\cdot v|\le d_1$.
The corresponding bounds hold for $w$, giving
\[
 |\det(v,w)|\le6(d_1^2d_2+d_1d_2^2).
\]
If $|z|<1/4$ and $|x|,|y|\le1/2$, the triangle lies in the region
where $A=0$, and the defect is zero. Otherwise $d_1+d_2\ge1/4$,
and
\[
 |\det(v,w)|\le d_1d_2
 \le4(d_1^2d_2+d_1d_2^2).
\]
Together with \eqref{curvature:continuous-triangle}, these estimates
prove the global three-point bound
\[
 \dist(\mu_{xy},\mu_{xz}\mu_{zy})
 \le30d(x,z)^2d(z,y)+30d(x,z)d(z,y)^2.
\]
It has two positive-exponent terms of total order $3$, hence gain $2$.
For the four-point condition, introduce
\[
 R(x,z,y)=b(x,y)-b(x,z)-b(z,y).
\]
The algebraic identity
\[
 b(x,u)+b(u,y)-b(x,v)-b(v,y)
 =-R(x,u,v)+R(u,v,y)
\]
gives
\begin{align*}
 \dist(\mu_{xu}\mu_{uy},\mu_{xv}\mu_{vy})
 \le30\bigl(&d(y,v)d(u,v)^2+d(y,v)^2d(u,v)\\
            &+d(x,u)^2d(u,v)+d(x,u)d(u,v)^2\bigr).
\end{align*}
This is the strong estimate with exponent pairs $(1,2)$ and $(2,1)$,
total order $3$, and prefactor $1$. Also $\Lip(\mu_{xy})=1$, so
its linear Lipschitz-control constant is zero.

The loop $\lambda(t)=(\cos(2\pi t),\sin(2\pi t))$ is
$2\pi$-Lipschitz for $d$, since its radius is constant. We verify its
sewn value directly. For two successive angles with positive
difference $\theta$, the chord formula gives
\[
 b(\lambda(s),\lambda(t))
 =\sin\theta\int_0^1\chi\bigl(1-2u(1-u)(1-\cos\theta)\bigr)\,du.
\]
The arguments of $\chi$ belong to $[0,1]$, and its Lipschitz
constant there is at most $4$. Since $\chi(1)=1$,
\[
 |b(\lambda(s),\lambda(t))-\theta|
 \le|\sin\theta-\theta|+2|\sin\theta|(1-\cos\theta)
 \le\frac76\theta^3.
\]
For a partition with angular increments $\theta_j$, summation yields
\[
 \left|\sum_jb(\lambda(t_{j-1}),\lambda(t_j))-2\pi\right|
 \le\frac76\sum_j\theta_j^3
 \le\frac{7\pi}{3}\bigl(2\pi\mesh(D)\bigr)^2\longrightarrow0.
\]
Thus $\varphi_{[\lambda]}=T_{2\pi}\ne\Id$.

For $a=(1,0)$ the affine contraction
\[
 H(s,t)=(1-t)\lambda(s)+ta
\]
stays in $P$, is continuous for $d$, fixes both endpoints at $a$,
and joins $\lambda$ to $c_a$. It is therefore a continuous relative
contraction with nontrivial sewn boundary holonomy.

There is no conflict with Lipschitz-homotopy descent. Every
$d$-Lipschitz path $\eta$ has constant radius. Indeed, a partition of
$[s,t]$ into $n$ equal intervals gives
\[
 \bigl||\eta(t)|-|\eta(s)|\bigr|
 \le n\Lip(\eta)^2\left(\frac{t-s}{n}\right)^2
 =\frac{\Lip(\eta)^2(t-s)^2}{n}\longrightarrow0.
\]
In particular, the displayed contraction is not Lipschitz for $d$.
\end{proof}

Translation increments also connect transport with integration. We now
replace the preceding chord increments by controlled rough increments
and apply thin descent and exact tree transport to their compensated sums.

\section{Controlled integration on the metric thin groupoid}
\label{sec:integration}

We use Gubinelli's compensated sums \cite{Gubinelli,FH} for controlled
fields on a metric space. Theorem~\ref{thm:main} makes their limits
agree on thin-equivalent paths. Theorem~\ref{thm:tree} gives primitives
on trees and then allows paths of possibly infinite length.
Substitution keeps a common enhanced controller throughout, following
the interval convention in \cite[Remark~4.12]{FH}. The related
transitivity result for cocyclic one-forms is
\cite[Proposition~40]{LY}.

Let $V,E,F$ be Banach spaces, with completed projective tensor products.
Fix $1/3<\alpha\le1$, $q=3\alpha$, and
$c_\alpha=2^{3\alpha}\zeta(3\alpha)$. For a zero-diagonal two-point
map $B$, put $[B]_\beta=\sup_{x\ne y}\|B_{xy}\|/d(x,y)^\beta$.
For a one-point map this notation applies to its increments.
An enhanced field $\mathbf X=(X,\mathbb X)$ on $P$ consists of
zero-diagonal maps with values in $V$ and $V\widehat\otimes_\pi V$,
respectively, satisfying, for every $x,y,z\in P$,
\begin{equation}\label{int:chen}
 X_{xz}=X_{xy}+X_{yz},\qquad
 \mathbb X_{xz}=\mathbb X_{xy}+\mathbb X_{yz}+X_{xy}\otimes X_{yz},
 \qquad M_1=[X]_\alpha<\infty,\quad M_2=[\mathbb X]_{2\alpha}<\infty.
\end{equation}
The second level is part of the data. A controlled pair is
$Y:P\to\mathcal L(V,E)$,
$Y':P\to\mathcal L(V,\mathcal L(V,E))$ such that
\begin{equation}\label{int:controlled}
 R^Y_{xy}=Y_y-Y_x-Y'_x(X_{xy},\cdot),\qquad
 [R^Y]_{2\alpha}+[Y']_\alpha<\infty.
\end{equation}
Here $Y'_x(u,v)=(Y'_xu)v$, extended to the projective tensor product.
The derivative $Y'$ is specified even when it is not uniquely determined
by $Y$. All notation involving $Y$ below retains this derivative. Set
\begin{equation}\label{int:increment}
 A^Y_{xy}=Y_xX_{xy}+Y'_x\mathbb X_{xy},\qquad
 D_Y=M_1[R^Y]_{2\alpha}+M_2[Y']_\alpha.
\end{equation}

\begin{theorem}[Metric controlled integral]\label{int:main}
There is an additive functor $\mathcal I_{\mathbf X}(Y):\Gthin\to(E,+)$
given, for $x_i=\gamma(t_i)$, by
\begin{equation}\label{int:limit}
 \mathcal I_{\mathbf X}(Y)([\gamma])
 =\lim_{\mesh(D)\to0}\sum_i A^Y_{x_i x_{i+1}},\qquad
 \left\|\mathcal I_{\mathbf X}(Y)([\gamma])-\sum_i A^Y_{x_i x_{i+1}}\right\|
 \le c_\alpha D_Y\len(\gamma)\omega_\gamma(D)^{q-1}.
\end{equation}
For a class $h$ represented by a path from $a$ to $b$,
\begin{equation}\label{int:class}
 \|\mathcal I_{\mathbf X}(Y)(h)-A^Y_{ab}\|
 \le c_\alpha D_Y\mathcal L(h)^q.
\end{equation}
It is unique among additive functors $F$ such that, on each Lipschitz
path $\gamma$, some $C_\gamma<\infty$ and $\theta_\gamma>1$ satisfy
$\|F([\gamma_{s,t}])-A^Y_{\gamma(s)\gamma(t)}\|
\le C_\gamma|t-s|^{\theta_\gamma}$ for all $s,t$.
It is linear in the
controlled pair and commutes with bounded linear maps on $E$ and
Lipschitz pullbacks of the parameter space.
\end{theorem}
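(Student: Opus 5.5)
The plan is to realize the controlled integral as a special case of Theorem~\ref{thm:main}. Take all fibers $M_x=E$ with the norm metric and translation increments $\mu_{xy}=T_{A^Y_{xy}}$, so that every chain product is translation by the corresponding sum. Then $f=0$ and $g=1$, and the comparison distance between two translations is the norm of the difference of their vectors. Only the three-point defect then needs checking.

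Step one is the algebraic defect identity. Using Chen's relation \eqref{int:chen} and the definition of $R^Y$, expand $A^Y_{xy}-A^Y_{xz}-A^Y_{zy}$. For the three points I would write the two-step route from $x$ through $z$ to $y$. The first level gives
\[
 Y_xX_{xy}-Y_xX_{xz}-Y_zX_{zy}
 =-(Y_z-Y_x)X_{zy}
 =-R^Y_{xz}X_{zy}-Y'_x(X_{xz},X_{zy}).
\]
The second level gives
\[
 Y'_x\mathbb X_{xy}-Y'_x\mathbb X_{xz}-Y'_z\mathbb X_{zy}
 =Y'_x(X_{xz}\otimes X_{zy})-(Y'_z-Y'_x)\mathbb X_{zy}.
\]
The cross terms cancel, leaving $-R^Y_{xz}X_{zy}-(Y'_z-Y'_x)\mathbb X_{zy}$. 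Its norm is bounded by
\[
 [R^Y]_{2\alpha}M_1\,d(x,z)^{2\alpha}d(z,y)^{\alpha}
 +[Y']_\alpha M_2\,d(x,z)^{\alpha}d(z,y)^{2\alpha}.
\]
This is \eqref{eq:defect} with $N=2$, exponents summing to $3\alpha=q>1$, and total constant $C=D_Y$. Hence $K=2^qD_Y\zeta(q)=c_\alpha D_Y$. The orientation conventions in \eqref{eq:defect} must be matched carefully, because the exponent pairs are $(a_r,b_r)$ attached to $d(y,z)$ and $d(z,x)$. Since only the sum $C$ enters $K$, this bookkeeping is harmless.

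Step two is to apply Theorem~\ref{thm:main} and transfer the translation action to an additive functor. The action maps are translations, being limits of translations in the supremum metric. Define $\mathcal I_{\mathbf X}(Y)(h)$ as the translation vector of $\varphi_h$. The action law then becomes additivity. The limit and error bound in \eqref{int:limit} are \eqref{eq:intrinsic-rate} with $g=1$. The class bound \eqref{int:class} comes from \eqref{eq:main-local} at $s=0$, $t=1$, applied to representatives with length close to $\mathcal L(h)$; here I use representative independence, continuity of $r\mapsto r^q$, and an infimum. Uniqueness follows from the uniqueness clause: an additive functor $F$ with the stated local bound yields the translation action $T_{F}$, which satisfies \eqref{eq:unique-hyp}.

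Step three treats the structural properties, using partition sums and uniqueness. For linearity, $A^{Y}$ is linear in the pair $(Y,Y')$, so the limits of sums are linear. For a bounded linear map $\Lambda\colon E\to F$, the pair $(\Lambda Y,\Lambda Y')$ is controlled and has increments $\Lambda A^Y$, and continuity lets $\Lambda$ pass through the limit. For pullbacks, take a Lipschitz map $\psi\colon P'\to P$. The pulled-back enhanced field $X_{\psi(x)\psi(y)}$, together with the pulled-back pair, satisfies \eqref{int:chen} and \eqref{int:controlled} with Hölder constants multiplied by powers of $\Lip\psi$. Its increments are $A^Y_{\psi(x)\psi(y)}$, so the partition sums along $\gamma$ and $\psi\circ\gamma$ coincide.

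I expect the main obstacle to be the defect identity in step one, specifically getting the tensor conventions of $Y'_x(u,v)$ and the order of $X_{xy}\otimes X_{yz}$ consistent so that the cross terms cancel exactly. If they fail to cancel, I would recheck the convention $Y'_x(u,v)=(Y'_xu)v$ against the first-order term $Y'_x(X_{xy},\cdot)$ in $R^Y$. The remaining steps are direct specializations of earlier results.
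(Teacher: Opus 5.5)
Your proposal is correct and follows the paper's proof essentially step for step. It uses the same defect identity (your $-R^Y_{xz}X_{zy}-(Y'_z-Y'_x)\mathbb X_{zy}$ is \eqref{int:defect} with the middle point renamed) and translation increments on the constant fiber $E$ with $f=0$, $g=1$, $K=c_\alpha D_Y$, followed by Theorem~\ref{thm:main}, Lemma~\ref{lem:mesh-limit}, the infimum over representatives, the uniqueness clause applied to $T_F$, and linearity and naturality read off from the partition sums. The one detail you skip, which the paper states explicitly, is that Definition~\ref{def:approx} requires $N\ge1$ and $C_r>0$: vanishing coefficients among $M_1[R^Y]_{2\alpha}$ and $M_2[Y']_\alpha$ must be dropped, and the case $D_Y=0$ is handled directly by exact additivity (the sums telescope) rather than through Theorem~\ref{thm:main}.
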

\begin{proof}
Expansion using \eqref{int:chen}--\eqref{int:controlled} gives
\begin{align}\label{int:defect}
 A^Y_{xz}-A^Y_{xy}-A^Y_{yz}
 &=-R^Y_{xy}X_{yz}-(Y'_y-Y'_x)\mathbb X_{yz},\\
 \|A^Y_{xz}-A^Y_{xy}-A^Y_{yz}\|
 &\le M_1[R^Y]_{2\alpha}d(x,y)^{2\alpha}d(y,z)^\alpha
     +M_2[Y']_\alpha d(x,y)^\alpha d(y,z)^{2\alpha}.\notag
\end{align}
On the constant fiber $E$, let $\mu_{xy}(v)=v+A^Y_{xy}$. These
translations satisfy Definition~\ref{def:approx} with $f=0$, $g=1$,
and total defect order $q>1$. Zero defect terms may be omitted. If
$D_Y=0$, \eqref{int:defect} gives exact additivity and every sum
telescopes. Otherwise Theorem~\ref{thm:main} applies, and its limit
is the translation by the limit of the sums. Lemma~\ref{lem:mesh-limit}
gives \eqref{int:limit}. The local estimate, followed by the infimum
over representatives, gives \eqref{int:class}. The uniqueness assertion
is that of Theorem~\ref{thm:main}, applied to translations. Each
partition sum is linear in $(Y,Y')$. If $f:Q\to P$ is Lipschitz,
pullback multiplies each order-$\beta$ seminorm by at most
$\Lip(f)^\beta$ and maps thin contractions to thin contractions.
The corresponding sums agree term by term. The same argument applies
to a bounded linear map on $E$, which commutes with their limits.
\end{proof}

For any path $\eta$, write $X^\eta_{st}=X_{\eta(s)\eta(t)}$,
$\mathbb X^\eta_{st}=\mathbb X_{\eta(s)\eta(t)}$, and
$x^\eta_t=X_{\eta(0)\eta(t)}$. Then
$x^\eta_t-x^\eta_s=X^\eta_{st}$. On a compact set $S\subset P$,
$Y'$ is bounded and, with $r=\diam S$,
\begin{equation}\label{int:compact-control}
 [Y]_{\alpha;S}\le\|Y'\|_{\infty;S}M_1+[R^Y]_{2\alpha}r^\alpha.
\end{equation}
Consequently Lipschitz pullback gives interval controlled data.
For $\alpha\le1/2$, \eqref{int:limit} agrees with Gubinelli
integration \cite[Theorem~4.10]{FH}.
For $\alpha>1/2$ the second-level sum tends
to zero, since it is bounded by
$\sup_t\|Y'_{\gamma(t)}\|M_2\Lip(\gamma)^{2\alpha}
\mesh(D)^{2\alpha-1}$, and the integral is Young's.
An interval enhancement extends to reversed pairs by
$X_{ts}=-X_{st}$ and
$\mathbb X_{ts}=-\mathbb X_{st}+X_{st}\otimes X_{st}$.
Then $R^Y_{ts}=-R^Y_{st}+(Y'_t-Y'_s)(X_{st},\cdot)$,
The anchored representation $b_t=X_{0t}$, $C_t=\mathbb X_{0t}$,
$X_{st}=b_t-b_s$,
$\mathbb X_{st}=C_t-C_s-b_s\otimes(b_t-b_s)$ verifies Chen for
all triples. The reverse estimates increase $M_2$ by at most $M_1^2$
and $[R^Y]_{2\alpha}$ by at most $[Y']_\alpha M_1$.
Thus the all-pairs convention includes the usual interval setting.
If $3\alpha>2$, Theorem~\ref{grid:superquadratic} additionally gives
full relative Lipschitz-homotopy invariance. The same conclusion
holds whenever the translations satisfy Definition~\ref{def:strong-knit},
by Theorem~\ref{thm:knitting}.

\begin{proposition}[Joint stability]\label{int:stability}
Let $(\widetilde{\mathbf X},\widetilde Y,\widetilde Y')$ be another
enhanced field and controlled pair on the same spaces. With each
remainder computed against its own driver, put
\begin{align*}
 D_\Delta={}&[X]_\alpha[R^Y-R^{\widetilde Y}]_{2\alpha}
 +[R^{\widetilde Y}]_{2\alpha}[X-\widetilde X]_\alpha\\
 &+[\mathbb X]_{2\alpha}[Y'-\widetilde Y']_\alpha
 +[\widetilde Y']_\alpha[\mathbb X-\widetilde{\mathbb X}]_{2\alpha}.
\end{align*}
For a class $h$ represented by a path from $a$ to $b$,
\begin{equation}\label{int:stable-bound}
 \|\mathcal I_{\mathbf X}(Y)(h)
   -\mathcal I_{\widetilde{\mathbf X}}(\widetilde Y)(h)
   -(A^Y_{ab}-A^{\widetilde Y}_{ab})\|
 \le c_\alpha D_\Delta\mathcal L(h)^q.
\end{equation}
\end{proposition}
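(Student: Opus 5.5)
The plan is to apply Theorem~\ref{thm:main} to the \emph{difference} increments on the constant fiber $E$. Put
\[
 \Gamma_{xy}=A^Y_{xy}-A^{\widetilde Y}_{xy},\qquad \mu_{xy}(v)=v+\Gamma_{xy}.
\]
These are translations, so $f=0$ and $g=1$. By linearity of partition sums, the sewn limit of $\Gamma$ along any representative is
\[
 \mathcal I_{\mathbf X}(Y)(h)-\mathcal I_{\widetilde{\mathbf X}}(\widetilde Y)(h),
\]
because the difference of two convergent partition sums converges to the difference of the limits; these limits are given by \eqref{int:limit}. The uniqueness in Theorem~\ref{thm:main} then identifies the sewn translation with this difference. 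Consequently, the local estimate of Lemma~\ref{lem:mesh-limit}, applied at $(s,t)=(0,1)$, followed by the infimum over representatives as in the proof of \eqref{int:class}, yields \eqref{int:stable-bound}, provided the three-point defect of $\Gamma$ is bounded by
\[
 D_\Delta\bigl(d(x,y)^{2\alpha}d(y,z)^\alpha+d(x,y)^\alpha d(y,z)^{2\alpha}\bigr).
\]
Indeed, with $p=q=3\alpha$ and $C=D_\Delta$, the constant $K$ becomes $c_\alpha D_\Delta$.

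The defect bound is the key algebraic step. By \eqref{int:defect}, applied to each triple separately with its own driver,
\[
 \Gamma_{xz}-\Gamma_{xy}-\Gamma_{yz}
 =-R^Y_{xy}X_{yz}+R^{\widetilde Y}_{xy}\widetilde X_{yz}
 -(Y'_y-Y'_x)\mathbb X_{yz}+(\widetilde Y'_y-\widetilde Y'_x)\widetilde{\mathbb X}_{yz}.
\]
I would split each bilinear difference in the standard way:
\[
 R^Y X-R^{\widetilde Y}\widetilde X=(R^Y-R^{\widetilde Y})X+R^{\widetilde Y}(X-\widetilde X),
\]
\[
 \delta Y'\,\mathbb X-\delta\widetilde Y'\,\widetilde{\mathbb X}=\delta(Y'-\widetilde Y')\,\mathbb X+\delta\widetilde Y'\,(\mathbb X-\widetilde{\mathbb X}).
\]
Bounding each term by the product of the corresponding H\"older seminorms gives exactly the four summands of $D_\Delta$. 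The first two carry the powers $d(x,y)^{2\alpha}d(y,z)^\alpha$, and the last two carry $d(x,y)^\alpha d(y,z)^{2\alpha}$. Each is then bounded by the sum of both monomials, with total constant $D_\Delta$. One convention needs checking: the defect in \eqref{eq:defect} is written as $\dist(\mu_{xy},\mu_{xz}\mu_{zy})$ with $z$ as the middle point. The relabelling only permutes which exponent attaches to which distance, and since both pairings $(2\alpha,\alpha)$ and $(\alpha,2\alpha)$ appear with the same total order $q$, this is harmless.

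I expect the main subtlety to be the degenerate cases and finiteness. The case $D_\Delta=0$ makes $\Gamma$ exactly additive, so partition sums telescope and the bound is trivially $0$, as in the proof of Theorem~\ref{int:main}. If any seminorm appearing in $D_\Delta$ is infinite the statement is vacuous; all seminorms are finite by hypothesis anyway, since each seminorm of a difference is at most the sum of the seminorms of its terms. Zero defect coefficients may be dropped, as remarked in the proof of Theorem~\ref{int:main}, so Definition~\ref{def:approx} (which requires $C_r>0$) is met.

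Finally, \eqref{eq:main-local} with $\ell_{0,1}=\len\gamma$ gives the bound $c_\alpha D_\Delta\len(\gamma)^q$ for each representative. Taking the infimum over $\gamma\in h$, using the continuity and monotonicity of $r\mapsto r^q$, replaces $\len\gamma$ by $\mathcal L(h)$ and completes the proof.
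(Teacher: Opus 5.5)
Your argument is the paper's: you sew the translations by $A^Y-A^{\widetilde Y}$, split the subtracted defect \eqref{int:defect} into the four bilinear pieces, and apply the local estimate at $(s,t)=(0,1)$. You then take the infimum over representatives.

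There is one bookkeeping slip, and it costs a factor of two in the constant. After bounding the four pieces, you weaken each one to its coefficient times $m_1+m_2$, where $m_1=d(x,y)^{2\alpha}d(y,z)^{\alpha}$ and $m_2=d(x,y)^{\alpha}d(y,z)^{2\alpha}$. The resulting bound $D_\Delta(m_1+m_2)$ is a two-term presentation with $C_1=C_2=D_\Delta$. The constant of Definition~\ref{def:approx} is therefore $C=\sum_rC_r=2D_\Delta$, and quoting the deletion lemma gives $K=2c_\alpha D_\Delta$, not the claimed $c_\alpha D_\Delta$.

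The symmetrization is unnecessary. As you observe, relabelling the middle point only swaps $a_r$ and $b_r$, and Definition~\ref{def:approx} allows a separate positive exponent pair for each term. So take
$C_1=[X]_\alpha[R^Y-R^{\widetilde Y}]_{2\alpha}+[R^{\widetilde Y}]_{2\alpha}[X-\widetilde X]_\alpha$ with $(a_1,b_1)=(\alpha,2\alpha)$, and
$C_2=[\mathbb X]_{2\alpha}[Y'-\widetilde Y']_\alpha+[\widetilde Y']_\alpha[\mathbb X-\widetilde{\mathbb X}]_{2\alpha}$ with $(a_2,b_2)=(2\alpha,\alpha)$.
Then $C_1+C_2=D_\Delta$ exactly, and this grouping is the paper's "mixed orders" remark. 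Alternatively, weighted AM--GM gives $m_1+m_2\le(d(x,y)+d(y,z))^{3\alpha}$ for $\alpha\ge1/3$, but using that means rerunning the deletion step rather than quoting it.

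Your appeal to uniqueness in Theorem~\ref{thm:main} is superfluous but harmless. The sewn limit of translations is, by construction, translation by the limit of the partition sums.
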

\begin{proof}
Writing $\delta A_{xyz}=A_{xz}-A_{xy}-A_{yz}$ and
$\Delta Y'_{xy}=Y'_y-Y'_x$, subtraction of \eqref{int:defect} gives
\begin{align*}
 \delta(A^Y-A^{\widetilde Y})_{xyz}
 ={}&-(R^Y-R^{\widetilde Y})_{xy}X_{yz}
      -R^{\widetilde Y}_{xy}(X-\widetilde X)_{yz}\\
    &-\Delta(Y'-\widetilde Y')_{xy}\mathbb X_{yz}
      -\Delta\widetilde Y'_{xy}
                 (\mathbb X-\widetilde{\mathbb X})_{yz}.
\end{align*}
The four coefficients sum to $D_\Delta$, with mixed orders
$(2\alpha,\alpha)$ or $(\alpha,2\alpha)$. Sewing translations by
$A^Y-A^{\widetilde Y}$ gives \eqref{int:stable-bound}, since their
partition sums converge to the difference of the integrals. If
$D_\Delta=0$, exact telescoping gives the same conclusion.
\end{proof}

\begin{proposition}[Substitution with a common controller]\label{int:substitution}
Let $H:P\to\mathcal L(E,F)$ be controlled by $X$, with derivative
$H':P\to\mathcal L(V,\mathcal L(E,F))$ and remainder $R^H$.
Assume $Y,Y',H,H'$ are bounded and $\alpha$-H\"older.
The product $Z=HY$ is controlled, with derivative
\begin{equation}\label{int:product-derivative}
 Z'_x(u,v)=H'_x(u)(Y_xv)+H_xY'_x(u,v).
\end{equation}
For a Lipschitz path $\gamma$, set
$J^\gamma_t=\mathcal I_{\mathbf X}(Y)([\gamma_{0,t}])$ and
$B_x(u,v)=H'_x(u)(Y_xv)$. The common-controller integral satisfies
\begin{equation}\label{int:substitution-identity}
 \int_\gamma H\,d_{\mathbf X}J^\gamma
 \;:=\lim_{\mesh(D)\to0}\sum_i
 \bigl(H_{x_i}(J^\gamma_{t_{i+1}}-J^\gamma_{t_i})
              +B_{x_i}\mathbb X_{x_i x_{i+1}}\bigr)
 =\mathcal I_{\mathbf X}(HY)([\gamma]).
\end{equation}
The value depends only
on the thin class of $\gamma$. No primitive on $P$ is assumed.
\end{proposition}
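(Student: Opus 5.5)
The plan has two parts: first show that $Z=HY$ is a controlled pair with the derivative \eqref{int:product-derivative}, and then show that the compensated sums on the left of \eqref{int:substitution-identity} differ from the partition sums of $A^{Z}$ by a quantity that vanishes with the mesh. Once that is done, Theorem~\ref{int:main} identifies the common limit as $\mathcal I_{\mathbf X}(HY)([\gamma])$. Thin-class dependence then comes for free from that theorem.

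\emph{Controlled product.} Expanding $H_yY_y-H_xY_x=(H_y-H_x)Y_y+H_x(Y_y-Y_x)$ and inserting $H_y-H_x=H'_x(X_{xy},\cdot)+R^H_{xy}$ and $Y_y-Y_x=Y'_x(X_{xy},\cdot)+R^Y_{xy}$ gives
\[
 R^Z_{xy}=R^H_{xy}Y_y+H'_x(X_{xy})(Y_y-Y_x)+H_xR^Y_{xy}.
\]
Boundedness of $H,Y$, together with the $\alpha$-H\"older bounds on $Y$ and $[X]_\alpha<\infty$, gives $[R^Z]_{2\alpha}<\infty$. Likewise $[Z']_\alpha<\infty$ follows from the product rule for H\"older seminorms of bounded $\alpha$-H\"older factors. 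Hence Theorem~\ref{int:main} applies to $Z$.

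\emph{Comparison of sums.} Fix $D$ and write $s=t_i$, $t=t_{i+1}$ and $x_i=\gamma(t_i)$. By additivity of the functor and Corollary~\ref{cor:subpath-class}, the increment $J^\gamma_t-J^\gamma_s$ equals $\mathcal I_{\mathbf X}(Y)([\gamma_{s,t}])$. By \eqref{int:class}, this is $A^Y_{x_ix_{i+1}}+\rho_i$ with $\|\rho_i\|\le c_\alpha D_Y\ell_{s,t}^{q}$. The $i$th summand therefore equals
\[
 H_{x_i}Y_{x_i}X_{x_ix_{i+1}}+\bigl(H_{x_i}Y'_{x_i}+B_{x_i}\bigr)\mathbb X_{x_ix_{i+1}}+H_{x_i}\rho_i,
\]
and the bracket is exactly $Z'_{x_i}$ applied to $\mathbb X$. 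So each summand is $A^Z_{x_ix_{i+1}}+H_{x_i}\rho_i$. The error sum is bounded by $\|H\|_\infty c_\alpha D_Y\sum_i\ell_{t_i,t_{i+1}}^q\le \|H\|_\infty c_\alpha D_Y\len(\gamma)\omega_\gamma(D)^{q-1}$, which tends to zero.

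The sums of $A^Z$ converge to $\mathcal I_{\mathbf X}(HY)([\gamma])$ by \eqref{int:limit}. This proves existence of the limit and the identity \eqref{int:substitution-identity}. The right-hand side depends only on $[\gamma]$, so the left does as well. No primitive is used: $J^\gamma$ is built from subpath classes of $\gamma$ alone.

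\emph{Main obstacle.} The delicate point is the identification of the second-level terms. The sum $B_{x_i}\mathbb X_{x_ix_{i+1}}$ must combine with $H_{x_i}Y'_{x_i}\mathbb X$ into $Z'_{x_i}\mathbb X$. This requires matching the conventions $Y'_x(u,v)=(Y'_xu)v$ and $B_x(u,v)=H'_x(u)(Y_xv)$ on elementary tensors, then extending by continuity to the projective tensor product. I would check this on $u\otimes v$ first and then use boundedness of the bilinear maps.

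A secondary point is the case $D_Y=0$. There the sums of $A^Y$ telescope exactly, $\rho_i=0$, and the argument above goes through unchanged.
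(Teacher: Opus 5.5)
Your proposal is correct and follows the paper's proof essentially step for step. Your remainder $R^H_{xy}Y_y+H'_x(X_{xy})(Y_y-Y_x)+H_xR^Y_{xy}$ is algebraically identical to the paper's $H_xR^Y_{xy}+R^H_{xy}Y_x+\Delta H_{xy}\Delta Y_{xy}$. Likewise, the splitting $J^\gamma_t-J^\gamma_s=A^Y_{\gamma(s)\gamma(t)}+E_{s,t}$ via additivity and \eqref{int:class}, the error bound $\|H\|_\infty c_\alpha D_Y\len(\gamma)\omega_\gamma(D)^{q-1}$, and the final appeal to Theorem~\ref{int:main} for $(Z,Z')$ are exactly the paper's steps; the separate $D_Y=0$ case is harmless but unnecessary, since \eqref{int:class} already covers it.
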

\begin{proof}
With $\Delta H_{xy}=H_y-H_x$ and $\Delta Y_{xy}=Y_y-Y_x$,
\begin{align*}
 R^Z_{xy}&=H_xR^Y_{xy}+R^H_{xy}Y_x+\Delta H_{xy}\Delta Y_{xy},\\
 [R^Z]_{2\alpha}
 &\le\|H\|_\infty[R^Y]_{2\alpha}
       +[R^H]_{2\alpha}\|Y\|_\infty+[H]_\alpha[Y]_\alpha,\\
 [Z']_\alpha
 &\le[H']_\alpha\|Y\|_\infty+\|H'\|_\infty[Y]_\alpha
       +[H]_\alpha\|Y'\|_\infty+\|H\|_\infty[Y']_\alpha.
\end{align*}
Additivity and \eqref{int:class} imply
\begin{equation}\label{int:J-error}
 J^\gamma_t-J^\gamma_s=A^Y_{\gamma(s)\gamma(t)}+E_{s,t},
 \qquad\|E_{s,t}\|\le c_\alpha D_Y\ell_{s,t}^{q}.
\end{equation}
Thus $J^\gamma$ is controlled by $x^\gamma$ with derivative
$Y\circ\gamma$: its remainder has order $2\alpha$, since
$q\ge2\alpha$ and $\ell_{s,t}\le\Lip(\gamma)|t-s|$.
For every $D$, the difference between the sum in
\eqref{int:substitution-identity} and $\sum_iA^Z_{x_i x_{i+1}}$ is
$\sum_iH_{x_i}E_{t_i,t_{i+1}}$. Its norm is at most
\[
 \|H\|_\infty c_\alpha D_Y\len(\gamma)
                  \omega_\gamma(D)^{q-1}\longrightarrow0.
\]
Theorem~\ref{int:main} applied to $(Z,Z')$ proves the identity and
thin invariance.
\end{proof}

The subscript in $d_{\mathbf X}J$ records the retained enhancement
and derivatives. For another bounded controlled field
$K:P\to\mathcal L(F,G)$, with $G$ Banach, satisfying the same
regularity assumptions, both bracketings of $KHY$ have derivative
\[
 (KHY)'_x(u,v)=K'_x(u)H_xY_xv
             +K_xH'_x(u)Y_xv+K_xH_xY'_x(u,v).
\]
The inner-integral path is
$Q^\gamma_t=\mathcal I_{\mathbf X}(HY)([\gamma_{0,t}])$.
Applying Proposition~\ref{int:substitution} on every subinterval gives
\begin{equation}\label{int:associativity}
 \int_\gamma K\,d_{\mathbf X}Q^\gamma
 =\int_\gamma KH\,d_{\mathbf X}J^\gamma
 =\mathcal I_{\mathbf X}(KHY)([\gamma]).
\end{equation}

\begin{theorem}[Tree primitives and finite $\rho$-variation]\label{int:tree}
Suppose $P=T$ is a metric tree and $o\in T$. There is a unique map
$J_Y:T\to E$ with $J_Y(o)=0$ and a finite constant in the estimate
$\|J_Y(y)-J_Y(x)-A^Y_{xy}\|\le C d(x,y)^q$.
It satisfies this estimate with $C=c_\alpha D_Y$.
Let $\eta:[0,1]\to T$ be continuous and
\[
 V_\rho(\eta)^\rho=\sup_D\sum_i
      d(\eta(t_i),\eta(t_{i+1}))^\rho<\infty,
 \qquad 1\le\rho<3\alpha.
\]
With $x_i=\eta(t_i)$ and $\delta_D=\max_i d(x_i,x_{i+1})$,
\begin{equation}\label{int:variation-error}
 \left\|J_Y(\eta(1))-J_Y(\eta(0))-
                         \sum_i A^Y_{x_i x_{i+1}}\right\|
 \le c_\alpha D_Y V_\rho(\eta)^\rho\delta_D^{3\alpha-\rho}
 \longrightarrow0.
\end{equation}
Under the assumptions of Proposition~\ref{int:substitution},
\begin{equation}\label{int:tree-substitution}
 \int_\eta H\,d_{\mathbf X}J_Y
 =\int_\eta HY\,d\mathbf X
 =J_{HY}(\eta(1))-J_{HY}(\eta(0)),
\end{equation}
where the integrals are the limits of their compensated sums.
\end{theorem}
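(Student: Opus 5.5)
The plan is to build $J_Y$ from the exact geodesic transport of Theorem~\ref{thm:tree}. The translation family $\mu_{xy}(v)=v+A^Y_{xy}$ on the constant fiber $E$ is an approximate action on $T$ with $f=0$, $g=1$ and total order $q=3\alpha$, exactly as in the proof of Theorem~\ref{int:main}. Theorem~\ref{thm:tree} then produces exact translations $F_{xy}=T_{c_{xy}}$, and the exact action law gives $c_{xy}+c_{yz}=c_{xz}$. The proof of Theorem~\ref{int:main} identifies the constant in the resulting defect bound as $c_\alpha D_Y$, so \eqref{eq:geo-bound} reads $\|c_{xy}-A^Y_{xy}\|\le c_\alpha D_Y d(x,y)^q$. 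I then define $J_Y(y)=c_{oy}$. Additivity gives $J_Y(y)-J_Y(x)=c_{xy}$, which yields the estimate with $C=c_\alpha D_Y$.

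For uniqueness, let $J'$ be another map with a finite constant $C'$. The cocycle $E_{xy}=J'(y)-J'(x)$ is then an exact action by translations satisfying $\|E_{xy}-A^Y_{xy}\|\le C' d(x,y)^q$ with $q>1$. Proposition~\ref{prop:tree-uniqueness} gives $E_{xy}=c_{xy}$, and the normalization $J'(o)=0$ gives $J'=J_Y$.

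For a continuous path $\eta$ of finite $\rho$-variation, I avoid any use of length. Put $x_i=\eta(t_i)$. Exactness gives the telescoping identity
\[
 J_Y(\eta(1))-J_Y(\eta(0))-\sum_iA^Y_{x_ix_{i+1}}=\sum_i\bigl(c_{x_ix_{i+1}}-A^Y_{x_ix_{i+1}}\bigr).
\]
Each term has norm at most $c_\alpha D_Y d_i^{3\alpha}\le c_\alpha D_Y d_i^\rho\delta_D^{3\alpha-\rho}$. Summing and using $\sum_i d_i^\rho\le V_\rho(\eta)^\rho$ proves \eqref{int:variation-error}. Uniform continuity of $\eta$ on $[0,1]$ gives $\delta_D\to0$ as the mesh tends to zero.

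For \eqref{int:tree-substitution}, I first note that Proposition~\ref{int:substitution} shows $Z=HY$ is controlled with derivative \eqref{int:product-derivative}. Applying the previous paragraph to $Z$ identifies the second equality: the compensated sums of $A^Z$ converge to $J_{HY}(\eta(1))-J_{HY}(\eta(0))$. For the first equality, I compare term by term:
\[
 H_{x_i}\bigl(J_Y(x_{i+1})-J_Y(x_i)\bigr)+B_{x_i}\mathbb X_{x_ix_{i+1}}-A^Z_{x_ix_{i+1}}=H_{x_i}\bigl(c_{x_ix_{i+1}}-A^Y_{x_ix_{i+1}}\bigr).
\]
This identity uses $A^Z_{xy}=H_xY_xX_{xy}+Z'_x\mathbb X_{xy}$ together with the decomposition $Z'=B+HY'$. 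The total difference is therefore bounded by $\|H\|_\infty c_\alpha D_Y V_\rho^\rho\delta_D^{3\alpha-\rho}\to0$.

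The main obstacle I expect is bookkeeping rather than analysis. Theorem~\ref{thm:tree} is stated for Lipschitz data, while $\eta$ here is merely continuous. The argument must therefore use only the exact cocycle and the pointwise local bound, never pathwise sewing along $\eta$. I also need to confirm that the constant $K$ for these translations, with $g=1$, is exactly $c_\alpha D_Y$. This should follow because the two defect terms have coefficients summing to $D_Y$ and total order $3\alpha$, so $K=2^{3\alpha}D_Y\zeta(3\alpha)$. The degenerate case $D_Y=0$ gives exact additivity directly, as in the proof of Theorem~\ref{int:main}.
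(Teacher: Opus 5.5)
Your proposal is correct and follows the paper's proof essentially step for step. The primitive is $J_Y(x)=I_{ox}$, built from the exact geodesic translations of Theorem~\ref{thm:tree} with constant $K=c_\alpha D_Y$. The $\rho$-variation bound is the same termwise summation using $d_i^{3\alpha}\le d_i^\rho\delta_D^{3\alpha-\rho}$. Substitution is the same term-by-term comparison, with error at most $\|H\|_\infty c_\alpha D_Y V_\rho(\eta)^\rho\delta_D^{3\alpha-\rho}$.

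The differences are cosmetic. You get uniqueness by citing Proposition~\ref{prop:tree-uniqueness}, whereas the paper repeats its equal-subdivision argument inline. The paper's closing identification with interval Gubinelli or Young integration is not required by the statement as posed.
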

\begin{proof}
For the geodesic $r_{xy}$ write
$I_{xy}=\mathcal I_{\mathbf X}(Y)([r_{xy}])$.
Theorem~\ref{thm:tree}, applied to the translations by $A^Y$, gives
$I_{xy}+I_{yz}=I_{xz}$ and
$\|I_{xy}-A^Y_{xy}\|\le c_\alpha D_Yd(x,y)^q$.
When $D_Y=0$, these follow directly from exact additivity.
Hence $J_Y(x)=I_{ox}$ has all the asserted properties.
For a competing primitive $\widetilde J$ with constant
$\widetilde C$, subdivision of $[x,y]$ into $n$ equal segments gives
\[
 \|(\widetilde J-J_Y)(y)-(\widetilde J-J_Y)(x)\|
 \le(\widetilde C+c_\alpha D_Y)d(x,y)^q n^{1-q}
 \longrightarrow0.
\]
Normalization at $o$ proves uniqueness. Summing the primitive estimate
over $D$ gives \eqref{int:variation-error}, since
$\sum_i d(x_i,x_{i+1})^q
\le\delta_D^{q-\rho}V_\rho(\eta)^\rho$.
Uniform continuity gives $\delta_D\to0$ as $\mesh(D)\to0$.
For substitution, the difference between the sums
\[
 \sum_i\bigl(H_{x_i}(J_Y(x_{i+1})-J_Y(x_i))
                    +B_{x_i}\mathbb X_{x_i x_{i+1}}\bigr),
 \qquad\sum_i A^{HY}_{x_i x_{i+1}}
\]
is bounded by
$\|H\|_\infty c_\alpha D_Y V_\rho(\eta)^\rho
\delta_D^{q-\rho}$.
Applying \eqref{int:variation-error} to $HY$ proves
\eqref{int:tree-substitution}.

For identification with interval integration, put $p=\rho/\alpha\in[1,3)$.
For every $D$ the metric seminorm estimates give
\begin{align*}
 \sum_i\|X_{x_i x_{i+1}}\|^p&\le M_1^pV_\rho(\eta)^\rho,&
 \sum_i\|\mathbb X_{x_i x_{i+1}}\|^{p/2}
 &\le M_2^{p/2}V_\rho(\eta)^\rho,\\
 \sum_i\|R^Y_{x_i x_{i+1}}\|^{p/2}
 &\le[R^Y]_{2\alpha}^{p/2}V_\rho(\eta)^\rho,&
 \sum_i\|Y'_{x_{i+1}}-Y'_{x_i}\|^p
 &\le[Y']_\alpha^pV_\rho(\eta)^\rho.
\end{align*}
By \eqref{int:compact-control}, $Y$ is $\alpha$-H\"older on the compact
set $S=\eta([0,1])$. For $2\le p<3$, these are controlled rough data,
and \eqref{int:variation-error} uses their Gubinelli sums.
For $p<2$, both $Y\circ\eta$ and $x^\eta$ have finite
$p$-variation, and the compensator has norm at most
\[
 \sup_t\|Y'_{\eta(t)}\|M_2V_\rho(\eta)^\rho
                      \delta_D^{2\alpha-\rho}\longrightarrow0.
\]
The limit is therefore the Young integral. For substitution, put
$r=\diam S$ and
$R^{J_Y}_{xy}=J_Y(y)-J_Y(x)-Y_xX_{xy}$. The primitive estimate gives
\begin{align*}
 [R^{J_Y}]_{2\alpha;S}
 &\le\|Y'\|_{\infty;S}M_2+c_\alpha D_Yr^\alpha,\\
 [J_Y]_{\alpha;S}
 &\le\|Y\|_{\infty;S}M_1+\|Y'\|_{\infty;S}M_2r^\alpha
                  +c_\alpha D_Yr^{2\alpha}.
\end{align*}
Thus $(J_Y\circ\eta,Y\circ\eta)$ is a controlled pair for $p\ge2$.
For $p<2$, $H\circ\eta$ and $J_Y\circ\eta$ have finite $p$-variation,
and $\|\sum_i B_{x_i}\mathbb X_{x_ix_{i+1}}\|$ is bounded by
$\|H'\|_{\infty;S}\|Y\|_{\infty;S}M_2
V_\rho(\eta)^\rho\delta_D^{2\alpha-\rho}\to0$.
This identifies \eqref{int:tree-substitution} with common-controller
rough integration or Young integration, respectively.
\end{proof}

This application is only a particular example of the integration theory provided by
metric-groupoid sewing. Further development will be pursued in
subsequent publications.

\end{document}